\documentclass{amsart}

\usepackage{amssymb}
\usepackage{amsthm}
\usepackage{mathtools}
\usepackage[polish,english]{babel}
\usepackage[T1]{fontenc}
\usepackage[utf8]{inputenc}

\usepackage{color}
\usepackage{graphicx}
\usepackage{bbm}

\newtheorem{theorem}{Theorem}[section]
\newtheorem{prop}[theorem]{Proposition}
\newtheorem{lemma}[theorem]{Lemma}
\theoremstyle{plain}
\newtheorem*{lemma*}{Lemma}

\theoremstyle{definition}
\newtheorem{df}[theorem]{Definition}
\newtheorem{exa}[theorem]{Example}

\newtheorem{que}[theorem]{Question}
\newtheorem{cor}[theorem]{Corollary}
\newtheorem{obs}[theorem]{Observation}
\newtheorem{prob}[theorem]{Problem}
\newtheorem{con}[theorem]{Conjecture}
\newtheorem{rem}[theorem]{Remark}

\theoremstyle{remark}

\numberwithin{equation}{section}

\newcommand{\black}{\color{black}}

\usepackage{xcolor}
\definecolor{darkgreen}{rgb}{0.0, 0.5, 0.0}
\newcommand{\dgreen}{\color{darkgreen}}
\definecolor{electricindigo}{rgb}{0.44, 0.0, 1.0}

\newcommand{\IZ}{\mathbb{Z}}

\newcommand{\K}{\mathcal{K}}
\newcommand{\mc}{\mathcal}

\DeclareMathOperator{\rng}{rng}

\newcommand{\on}{\operatorname}
\DeclareMathOperator{\iso}{Iso}
\newcommand{\CIT}{\operatorname{ISC}}

\usepackage{physics}
\usepackage{amsmath}
\usepackage{tikz}

\usepackage{wrapfig} 
\usepackage [rightcaption] {sidecap}
\usepackage {graphicx}  
\graphicspath {{images/}}

\usepackage{mathdots}
\usepackage{yhmath}
\usepackage{cancel}
\usepackage{color}
\usepackage{array}
\usepackage{multirow}
\usepackage{amssymb}
\usepackage{tabularx}
\usepackage{extarrows}
\usepackage{booktabs}
\usetikzlibrary{fadings}
\usetikzlibrary{patterns}
\usetikzlibrary{shadows.blur}
\usetikzlibrary{shapes}

\usepackage{eucal}
\usepackage{mathrsfs}

\newcommand{\cc}{\mathfrak{c}}

\newcommand{\Q}{\mathbb{Q}}

\newcommand{\PP}{\mathcal{P}}
\newcommand{\RR}{\mathcal{R}}

\newcommand{\UU}{\mathcal{U}}

\newcommand{\s}{\subseteq}

\DeclareMathOperator{\ds}{DegInv}
\DeclareMathOperator{\UN}{U}
\DeclareMathOperator{\fin}{FIN}
\DeclareMathOperator{\Acc}{Acc}
\DeclareMathOperator{\bc}{BC}

\begin{document}

\title{Infinite Random Graphs}

\author{Ziemowit Kostana} 
\address[Z.~Kostana]{Institute of Mathematics, Czech Academy of Sciences, \v{Z}itn\'{a} 25, 115~67 Prague 1, Czech Republic \&
Faculty of Mathematics, Wroc\l aw University of Science and Technology, Wybrzeże Wyspia\'nskiego 27, 50-370 Wroc\l aw}
\email{ziemowit.kostana@pwr.edu.pl}
\thanks{Research of Z. Kostana was supported by the GA\v{C}R project EXPRO 20-31529X; RVO: 67985840}

\author{Jaros{\l}aw Swaczyna}
\address[J.~Swaczyna]{Institute of Mathematics, {\L}\'od\'z University of Technology, Aleje Politechniki 8, 93-590 {\L}\'od\'z, Poland}
\email{jaroslaw.swaczyna@p.lodz.pl}
\thanks{The second-named author acknowledges with thanks support by NCN project SONATA BIS 13 No. 2023/50/E/ST1/00067. }

\author{Agnieszka Widz}
\address[A.~Widz]{Institute of Mathematics, {\L}\'od\'z University of Technology, Aleje Politechniki 8, 93-590 {\L}\'od\'z, Poland}
\email{AgnieszkaWidzENFP@gmail.com}
\thanks{The last-named author was supported by the NCN project PRELUDIUM 23, No. 2024/53/N/ST1/03825}

\subjclass[2020]{05C80, 05C63, 60C05}

\begin{abstract}
We study countable graphs that -- up to isomorphism and with probability one -- arise from a random process, in a similar fashion as the Rado graph. Unlike in the classical case, we do not require that probabilities assigned to pairs of points are all equal. We give examples of such generalized random graphs, and show that the class of graphs under consideration has a two-element basis.
\end{abstract}

\maketitle

\section{Introduction}\label{section Introduction}
The Rado graph $\RR$ (also known as the random graph, or the Erd\H{o}s-R\'enyi graph)
is the unique countable graph that is isomorphic to almost every graph on $\omega$, where edges between distinct natural numbers are drawn with a fixed probability
different from $0$ and $1$. The earliest known definition appears in 1937 in  \cite{Ackermann}, where the graph is presented as a directed graph. The probabilistic definition, expressed above, is due to Erd\H{o}s and R\'enyi \cite{Erdos_Reni}. A year later in \cite{Rado}, Rado independently constructed a graph in which the vertices are natural numbers and two vertices are connected by an edge when the binary expansion of one of the numbers has a 1 in the digit position indexed by the other. It turned out that this construction gives a graph isomorphic to those described by Erd\H{o}s and R\'enyi. 

The paper \cite{Cameron_revisited} is a short survey of basic features of the Rado graph, and \cite{brian2018subsets} studies how randomness of the Rado graph transfers to certain "large" subsets of the underlying set.
The graph $\RR$ belongs to the class of structures called \emph{Fra\"iss\'e limits} or \emph{universal homogeneous structures}. They have been one of the central notions
in model theory for the past half century. Automorphism groups of Fra\"iss\'e limits are rich source of examples in the theory of topological groups, and gained importance in topological dynamics due to famous Kechris-Pestov-Todor\v{c}evi\'c correspondence \cite{KPT}. The reader that is familiar with category theory is invited to see \cite{Kubis} for an introduction to a categorical theory of Fra\"iss\'e limits. More basic -- purely model-theoretic -- exposition can be found in \cite{Hodges}. A valuable survey
on homogeneous structures is \cite{Macpherson}. 

As it turns out, the Rado graph is not the only universal homogeneous structure that can be represented as appearing-with-probability-one during some random process.
The work \cite{ackerman2016invariant} gives a complete characterization of universal homogeneous structures that have this property. The results of \cite{ackerman2016invariant} come under a natural restriction that the measures are invariant under all permutations of the underlying set. This is a natural assumption of symmetry, reflecting the fact that the random process of the Rado graph, assigns the same probability to every pair of vertices. The
question on what happens if this assumption is dropped is nevertheless open and interesting.

The fact that a randomly generated graph is almost surely isomorphic to the Rado graph is an instance of a \emph{zero-one law}. The classical Zero-One Law, due to Kolmogorov \cite{kolmogorov2018foundations},
relies on the fact the probabilities assigned to distinct edges are all equal, and independent of each other. There exist more general zero-one laws, relaxing the
assumptions on equality or independence -- for example the Hewitt-Savage zero-one law \cite{hewitt1955symmetric}. Further generalizations can be found in the recent paper \cite{ayach2025zero}.

The theory of finite randomly generated graphs has always been somewhat distant from the field of universal homogeneous structures. Unlike the latter, it is deeply rooted in advanced probability theory. The research in the theory of finite randomly generated graphs is usually concentrated on finding the minimal probability that (asymptotically with respect to
the size of the graph) ensures that some property of the graph holds with large probability. One may think of eg. drawing graph $G$ with $n$ vertices and ask whether $G$ is connected; or fix a finite graph $H$ and check if $G$ contains $H$ as (induced) subgraph. A good introduction to the topic is \cite{janson_random_graphs}, or more recent \cite{frieze2015introduction}.

We consider generalizations of the classical Rado graph  for more general probability distributions. To the best of our knowledge, there has been remarkably little research on randomly generated infinite graphs constructed via biased coin flips, where the probability of placing an edge between two vertices may vary and we aim to fix this gap.  In this paper, we address this gap by introducing the notion of a \emph{drawable} graph. A countable graph is drawable if it can be obtained (up to isomorphism, and with probability one) from a process of randomly drawing edges between natural numbers, not necessarily with equal probabilities. 
Equivalently, a graph is drawable if its isomorphism type has full measure with respect to some non-trivial\footnote{Non-trivial in a sense that we allow only probabilities strictly between zero and one.} product probability measure on the set of pairs of naturals $2^{[\omega]^2}$. 

In this paper, we develop a framework for the study of the topic described above. The structure of the paper is the following.

\begin{enumerate}
\item In Section \ref{section Preliminaries} we define all needed notions;
\item Section \ref{section General facts about drawable graphs} studies properties of drawable graphs and provides number of examples. Notable result in this section is Theorem \ref{t:CharGrafLos1}, which fully characterizes certain class of drawable graphs.
\item Section \ref{section A basis theorem for weakly universal graphs} is focused on the fact that the class of drawable graphs has a two-element basis, meaning that there
exist two drawable graphs such that every drawable graph contains an isomorphic copy of one of them.
\item In Section \ref{section Landscape on drawable graphs} we sketch general landscape of the topic, summarize results obtained so far and state some conjectures which seems crucial to understand the structure of the topic;
\item In Section \ref{section APPENDIX} we discuss some general aspects of countable products of probability measures, which plays crucial role in the process of drawing graphs.
\end{enumerate}

\section{Preliminaries}\label{section Preliminaries}
We denote $\omega=\{0,1,2,\ldots\} $ as set of naturals. 

Whenever $X$ is a set, by $[X]^2$ we denote the set of all $2$-element subsets of $X$. 

By $2^\omega$ we denote set of all $0-1$ sequences (and we identify $2^\omega$ with the Cantor set), by $2^{<\omega}$ we denote set of all finite $0-1$ sequences (including the empty sequence), and by $2^n$ we denote set of all $0-1$ sequences of lenght $n$ (clearly $2^{<\omega}=\bigcup_{n\in \omega} 2^n $). More generally, if $X,Y$ are sets, by $X^Y$ we denote the family of all functions acting from the set $Y$ into the set $X$. 

\subsection{Graphs}
A \emph{graph} is a pair \( G = (V, E) \), where:
\begin{itemize}
  \item \( V \) is a non-empty set called the \emph{vertex set},
  \item \( E \subseteq [V]^2 \) is a set of unordered pairs of elements from \( V \), called the \emph{edge set}.
\end{itemize}
Given a graph \( G = (V, E) \), we write \( V(G) := V \) and \( E(G) := E \). In most cases, we assume \( V = \omega \), where \( \omega \) denotes the set of natural numbers, including zero .  In such cases, we may identify the graph \( G \) with its edge set \( E(G) \subseteq [\omega]^2 \). If a vertex $v$ is connected with a vertex $w$ we write $v E w$. We denote the cardinality of a set \(B\) by \(\#B\).  In particular, for a graph \(G\) the symbol \(\#G\) denotes the number of vertices of \(G\).

For \( G = (V_G, E_G) \), a graph \( H = (V_H, E_H) \) is a \emph{subgraph} of $G$, if 
$$V_H \s V_G \text{ and }E_H=E_G\cap (H\times H).$$ For $A\s V_G$, the \emph{subgraph induced on $A$} is 
$$G\restriction_A=(A, E_G\cap (A\times A).$$
The \emph{complement} of a graph \( G = (V_G, E_G) \) is \((V_G, [V_G]^2\setminus E_G) \).

For a vertex \( v \in V(G) \), the \emph{degree} of \( v \) in \( G \), denoted \( \operatorname{deg}_G(v) \), is defined as the number of vertices adjacent to \( v \), that is,
\(\operatorname{deg}_G(v) := \# \left\{ u \in V(G) :u E v  \right\}\). A graph $G$ is \emph{locally finite} if $\operatorname{deg}_G(v)$ is finite for every $v \in V(G)$.

\subsection{Isolated Unions}

Given graphs $G$ and $H$, we define the \emph{isolated union} $G+H$, as a graph in which the set of vertices is $V=(V(G)\times \{0\}) \cup (V(H)\times \{1\})$, and, given $(v_1,i),(v_2,j)\in V$ we declare $(v_1,i)E(v_2,j)$ if and only if $i=j$ and $v_1,v_2$ were connected within $G$ or $H$. The point of this definition is that $G+H$ is formed by taking a disjoint union of $G$ and $H$ without adding new edges.

In a similar fashion, if $\{H_i : i\in I\}$ is any family of graphs, we define the graph $\sum\limits_{i\in I}H_i$ as disjoint union of all graphs $H_i$. More precisely,\[ \text{ the set of vertices of } \sum\limits_{i\in I}H_i \text{ is } \bigcup\limits_{i\in I}V(H_i)\times \{i\}, \text{ and }\]
$$(v,i)E(w,j) \text{ if and only if }i=j \text{ and } \{v,w\} \in E(H_i)=E(H_j).$$

For sequences $x,\, y \in 2^\omega$ we write $x+_2y$ for the sum modulo $2$ taken coordinate-wise.
For an integer $n$ we may identify $2^n$ with the set $\{x \in 2^\omega :
\forall \, k\geqslant n \; x(k)=0\}$. By $\mathbb Q \s 2^\omega$ we mean the set of all sequences with only finitely many occurrences of $1$, i.e. $\mathbb Q=\bigcup_n 2^n$.
For a set $A \s 2^n$, for $n\in\omega$, we write $[A]:=\{x \in  
2^\omega : x \restriction_n \in A$\}, where $x \restriction_n$ stands for restriction of a sequence $x$ to $\{0,\ldots,n-1\}$. A set $E \s 2^\omega$ is a \emph{tail-set} if for every $x \in 2^{<\omega}$, for every $e \in E$, we have
\[
x+_2 e \in E\iff e \in E.
\]

\subsection{Isomorphism classes}
We define the \emph{isomorphism class} of $G$ as:

\[
\iso(G):=\{E \s [\omega]^2: (n,\,E)\text{ is isomorphic with }G\}.
\]
The \emph{almost isomorphism class} of an infinite graph $G$ is defined as:


\[
\iso^*(G)
:=\left\{\,E \subseteq [\omega]^2 :
\mathop{\scalebox{1.4}{$\exists$}}_{%
  \substack{%
    \scriptstyle E^*\subseteq[\omega]^2
  }%
}
\;\#E^*\triangle E<\omega\;\wedge\;(\omega,E^*)\in\iso(G)
\right\},
\]
where $\triangle$ denotes the symmetric difference. 

A \emph{finite modification} of a graph $G$ is a graph resulting from modifying a finite set of edges of $G$. An infinite graph $G$ is \emph{invariant under finite modifications} if its isomorphic to each of its finite modifications, equivalently
$$\iso(G)=\iso^*(G).$$

\begin{df}
Let $G$ be a graph on $\omega$.
\begin{enumerate}
    \item Let $\PP\s (0,1)^{[\omega]^2}$. $G$ is \emph{weakly $\PP$-drawable} if there exists a family of probabilities $\{p_e : e \in [\omega]^2\} \in \PP$ such that
    the set $\iso(G)$ has measure one with respect to the product measure 
    $\prod\limits_{e\in[\omega]^2}p_e.$

    \item Let $\PP\s (0,1)^{\omega}$. $G$ is \emph{strongly $\PP$-drawable} if for any family of probabilities $\{p_n : n \in\omega \} \in \PP$, there exists a bijection $\sigma: \omega\rightarrow [\omega]^2$
    the set $\iso(G)$ has measure one with respect to the product measure $\prod\limits_{n\in\omega}p_{\sigma(n)}$.

    \item Let $\PP\s (0,1)^{[\omega]^2}$. $G$ is \emph{weakly* $\PP$-drawable} if there exists a family of probabilities $\{p_e : e \in [\omega]^2\} \in \PP$ such that
    the set $\iso^*(G)$ has measure one with respect to the product measure 
    $\prod\limits_{e\in[\omega]^2}p_e.$

    \item Let $\PP\s (0,1)^{\omega}$. $G$ is \emph{strongly* $\PP$-drawable} if for any family of probabilities $\{p_n : n\in\omega\} \in \PP$, there exists a bijection $\sigma: \omega\rightarrow [\omega]^2$
    the set $\iso^*(G)$ has measure one with respect to the product measure $\prod\limits_{n\in\omega}p_{\sigma(n)}$.

    \item Let $\textbf{p}\in(0,1)^{[\omega]^2}$. $G$ is $\textbf{p}$-random, if $\mu_{\textbf{p}}(\iso(G))=1$  (equivalently if $G$ is weakly $\{\textbf{p}\}$-drawable)

\end{enumerate}
    We omit the parameter $\PP$ if $\PP=(0,1)^{[\omega]^2}$.
\end{df}

Most of the time, we will work with a sequence of probabilities $(p_n)_{n\in\omega}$, and assign the probabilities to pairs of integers via some bijection $\sigma: \omega\rightarrow [\omega]^2$. Families of probabilities $\mathbf{p}$ can therefore be indexed either by $\omega$ or $[\omega]^2$. If $\textbf{p}=(p_n)_{n\in \omega}$ is a sequence of probabilities, we write $\sigma(\textbf{p})=(p_{\sigma(n)})_{n\in\omega}\in (0,1)^{[\omega]^2}$. 

For a family of probabilities, $\mathbf{p}=\{p_e : e\in [\omega]^{2}\},$ the measure $\mu_\textbf{p}$ is the product measure $\prod\limits_{e\in[\omega]^2}p_e.$ More precisely
\[
\mu_{\textbf{p}} \left(\left\{(e_i)_{i\in\omega}\in 2^{[\omega]^2} \colon e_{i_0} = \ldots =e_{i_n}=0, \   e_{j_0} = \ldots=e_{j_m}=1  \right\}\right)=\prod_{k=0}^n p_{j_k} \cdot \prod_{l=0}^m (1-p_{i_l}),
\]
where $n,m$ varies over $\omega$ and $i_0, \ldots,i_n,j_0,\ldots,j_m$ are pairwise distinct elements of $\omega$.

The following definition is motivated by the possibility of applying the Borel–Cantelli lemmas to ensure that, for infinitely many pairwise disjoint $k$-element sets of events, all $k$ events occur (or fail to occur) simultaneously infinitely often.
\begin{df}
    A sequence of probabilities $\{p_i\}_{i\in I}$ is \emph{Borel--Cantelli} if
    $$\sum\limits_{n\in I} p_n^k=\infty,$$ 
    and
    $$\sum\limits_{n\in I} (1-p_n)^k=\infty,$$
    for any positive integer $k$.
\end{df}

\begin{theorem}[Theorem 6, \cite{coregliano2024random}]
    Let $\mathbf{p}=(p_n)_{n\in \omega}$. The Rado graph is strongly  $ \{ \mathbf{p} \}$-drawable if and only if $\mathbf{p}$ is Borel--Cantelli.
\end{theorem}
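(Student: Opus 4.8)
The plan is to use the standard back-and-forth characterisation of $\RR$: a countable graph is isomorphic to $\RR$ precisely when it has the \emph{extension property} (EP) --- for all finite disjoint $A,B\subseteq\omega$ there is a vertex $v\notin A\cup B$ adjacent to every point of $A$ and to no point of $B$. Thus, fixing a bijection $\sigma\colon\omega\to[\omega]^2$ and writing $q_e$ for the probability $\sigma(\mathbf p)$ puts on the edge $e$, one has $\mu_{\sigma(\mathbf p)}(\iso(\RR))=1$ iff EP holds $\mu_{\sigma(\mathbf p)}$-almost surely; and since there are only countably many pairs $(A,B)$, it suffices to secure each single instance of EP with probability one.

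\emph{Strongly $\{\mathbf p\}$-drawable $\Rightarrow$ Borel--Cantelli, contrapositively.} Suppose $\mathbf p$ is not Borel--Cantelli, say $\sum_n p_n^{k}<\infty$ for some $k$ (the case $\sum_n(1-p_n)^k<\infty$ is identical after passing to complements, using that $\RR$ is self-complementary). For \emph{any} bijection $\sigma$ the sum $\sum_{e}q_e^{k}$ equals $\sum_n p_n^k$ (reindexing a nonnegative series), hence is finite. Fix distinct vertices $a_1,\dots,a_k$; by the AM--GM inequality $\prod_{i\le k}q_{\{v,a_i\}}\le\frac1k\sum_{i\le k}q_{\{v,a_i\}}^{k}$, so $\sum_{v}\prod_{i\le k}q_{\{v,a_i\}}\le\sum_{e}q_e^{k}<\infty$. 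By the first Borel--Cantelli lemma, $\{a_1,\dots,a_k\}$ almost surely has only finitely many common neighbours; but in $\RR$ every finite set of vertices has infinitely many common neighbours (iterate EP). Hence $\mu_{\sigma(\mathbf p)}(\iso(\RR))=0$ for every $\sigma$, so $\RR$ is not strongly $\{\mathbf p\}$-drawable.

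\emph{Borel--Cantelli $\Rightarrow$ strongly $\{\mathbf p\}$-drawable.} Here I would construct a suitable $\sigma$ by hand. First, partition the index set $\omega$ of $\mathbf p$ into countably many pieces $M_{(A,B)}$, one per finite disjoint pair, each still Borel--Cantelli, keeping moreover an extra infinite block of indices in reserve; this is a routine greedy block construction exploiting that all the series $\sum_n p_n^k$ and $\sum_n(1-p_n)^k$ diverge. Next, for each $(A,B)$ with $s=\#A$, $t=\#B$, I want pairwise disjoint $S_1,S_2,\dots\subseteq M_{(A,B)}$ of size $s$ and pairwise disjoint $T_1,T_2,\dots\subseteq M_{(A,B)}$ of size $t$, all disjoint from one another, with
\[
\sum_{m}\;\prod_{n\in S_m}p_n\;\prod_{n\in T_m}(1-p_n)=\infty .
\]
Granting this, for each $(A,B)$ I enumerate a sequence of fresh, pairwise distinct witness vertices $v_1,v_2,\dots$ (also avoiding $A\cup B$), let $\sigma$ assign the edges $\{v_m,a\}$, $a\in A$, the probabilities indexed by $S_m$ and the edges $\{v_m,b\}$, $b\in B$, those indexed by $T_m$, and define $\sigma$ bijectively on the remaining (infinitely many) edges using the reserved indices. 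Since distinct witnesses yield edge-disjoint stars, for fixed $(A,B)$ the events ``$v_m$ witnesses $(A,B)$'' are independent with probabilities $\prod_{n\in S_m}p_n\prod_{n\in T_m}(1-p_n)$ summing to $\infty$, so the second Borel--Cantelli lemma gives infinitely many witnesses almost surely, hence EP for $(A,B)$ almost surely; intersecting over the countably many pairs gives $\mu_{\sigma(\mathbf p)}(\iso(\RR))=1$.

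The main obstacle is the displayed claim --- producing the $S_m,T_m$ with a divergent sum of products simultaneously; naively pairing ``good for $p$'' blocks with ``good for $1-p$'' blocks need not work. I would argue inside $M:=M_{(A,B)}$ by cases. If $\sum_{n\in M}\min(p_n,1-p_n)^{s+t}=\infty$, sort the values $\min(p_n,1-p_n)$, $n\in M$, in decreasing order, cut them into consecutive blocks of length $s+t$, and split each block arbitrarily into an $S_m$ and a $T_m$: because $\min(p_n,1-p_n)$ large forces both $p_n$ and $1-p_n$ large, the $m$-th product is at least the $(s+t)$-th power of that block's minimum, and a short estimate (grouping a divergent series of nonincreasing terms into blocks of fixed length) shows these powers sum to $\infty$. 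If instead $\sum_{n\in M}\min(p_n,1-p_n)^{s+t}<\infty$, then since $\sum_{n\in M}p_n^{s+t}$ and $\sum_{n\in M}(1-p_n)^{s+t}$ both diverge, both of $\{n\in M:p_n>\frac12\}$ and $\{n\in M:p_n\le\frac12\}$ are infinite; drawing each $S_m$ from the former and each $T_m$ from the latter makes every summand at least $2^{-(s+t)}$, and the sum diverges again. Everything else --- the partition lemma, the bijective bookkeeping, the Borel--Cantelli applications --- is routine.
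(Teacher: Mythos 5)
Note first that the paper does not prove this statement itself: it is imported from \cite{coregliano2024random} (Theorem 6), with Lemma \ref{LemM} quoted as the technical core. Your strategy is the standard one and matches that source in spirit: identify $\iso(\RR)$ with the countably many instances of the extension property, use the first Borel--Cantelli lemma for the ``only if'' direction, and build a bijection $\sigma$ realizing each instance via the second Borel--Cantelli lemma for the ``if'' direction; your displayed claim about the sets $S_m,T_m$ plays exactly the role of Lemma \ref{LemM}. The ``only if'' direction as you wrote it is correct: the inequality $\prod_{i\le k}q_{\{v,a_i\}}\le\tfrac1k\sum_{i\le k}q_{\{v,a_i\}}^{k}$ (AM--GM applied to the $k$-th powers), the resulting convergent sum over $v$, the fact that in any copy of $\RR$ every $k$ vertices have infinitely many common neighbours, and the complementary argument all go through for every bijection $\sigma$ at once.

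Two points in the ``if'' direction need repair, though both are easily fixed. First, in the case $\sum_{n\in M}\min(p_n,1-p_n)^{s+t}=\infty$ you ``sort the values in decreasing order''; an infinite sequence need not admit a nonincreasing rearrangement (for instance values accumulating at $\tfrac14$ from both sides), so the step as written can fail. It can only fail when $\min(p_n,1-p_n)$ does not tend to $0$ along $M$, and then infinitely many indices satisfy $\min(p_n,1-p_n)\ge\delta>0$, so arbitrary disjoint blocks drawn from those indices already give summands at least $\delta^{s+t}$; when the minima do tend to $0$ the sort exists and your fixed-block-length estimate is fine. Second, taking ``fresh, pairwise distinct witness vertices'' does not by itself make $\sigma$ well defined: stars attached to witnesses of two different pairs $(A,B)$ and $(A',B')$ can share an edge when the witness of the first pair lies in $A'\cup B'$ while the witness of the second lies in $A\cup B$, and that edge would then receive two distinct indices, destroying injectivity of $\sigma$. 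This too is routine to repair: enumerate the pairs as $(A_i,B_i)$, reserve pairwise disjoint infinite sets of candidate witnesses, and choose the witnesses for the $i$-th pair outside $\bigcup_{j\le i}(A_j\cup B_j)$; then all stars, within and across pairs, are edge-disjoint, the independence you invoke for the second Borel--Cantelli lemma holds, and the leftover indices (your reserve block) can be matched bijectively with the infinitely many edges not lying on any star. With these two patches your argument is complete and is essentially the intended proof.
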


We need the following technical Lemma.
\begin{lemma}\cite[Lemma 2.3]{coregliano2024random}\label{LemM}
  If $(p_n)_{n\in \omega}$ is a Borel--Cantelli sequence, 
  then there exists an injection $f\colon \mathfrak{X} \to \omega$ such that 

\begin{align*}
  \mathrel{%
    \raisebox{-1.2ex}{%
      $\displaystyle 
       \mathop{\scalebox{2}{$\forall$}}_{\substack{\scriptstyle k,m\in\omega}}$
    }%
  }
  \sum_{n=0}^\infty
    \prod_{i=0}^{k-1}p_{f(k,m,n,i)}
    \prod_{i=k}^{2k-1}(1-p_{f(k,m,n,i)})
  \;=\;\infty
\end{align*}
  and $\omega \setminus \rng(f)$ is infinite, where
  \begin{align*}
    \mathfrak{X} & \coloneqq \{(k,m,n,i)\in\omega^4 : i\leqslant 2k-1\}.
  \end{align*}
\end{lemma}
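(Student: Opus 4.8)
The statement asks us to find a single injection $f \colon \mathfrak{X} \to \omega$, with co-infinite range, such that for every pair $(k,m)$ the series obtained by summing over $n$ (of the product of $k$ probabilities and $k$ complementary probabilities, all at distinct indices) diverges. The plan is to build $f$ by a greedy/recursive bookkeeping argument, processing the countably many "tasks" $(k,m) \in \omega^2$ in some fixed enumeration while, simultaneously, reserving infinitely many indices of $\omega$ to be left untouched.

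First I would reduce the problem to a cleaner combinatorial core. Fix $k$. The Borel--Cantelli hypothesis gives $\sum_{n} p_n^k = \infty$ and $\sum_n (1-p_n)^k = \infty$. The key elementary observation is that if $S \subseteq \omega$ is any \emph{infinite} set, then $\sum_{n \in S} p_n^k = \infty$ OR $\sum_{n \in S} (1-p_n)^k = \infty$ is not automatic — but what \emph{is} true is that from an infinite index set one can always extract, for fixed $k$, an infinite sequence of pairwise-disjoint $2k$-element blocks $B_0, B_1, B_2, \ldots$ (each $B_n$ having $k$ indices earmarked "for $p$" and $k$ earmarked "for $1-p$") such that $\sum_n \prod_{j \in B_n^{+}} p_j \prod_{j \in B_n^{-}}(1-p_j)$ diverges. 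This is precisely the content of the Borel--Cantelli property: as in the classical two-colour case, from $\sum p_n^k = \infty$ one greedily peels off consecutive finite runs whose $k$-th-power-sums each exceed (say) $1$, then from the complementary-sum divergence peels off interleaved runs, and by AM--GM / convexity one bounds $\prod_{j\in B}p_j \geq$ something controlled by $\big(\tfrac1k\sum_{j\in B}p_j^{?}\big)$... — the honest route here is to cite or mimic the proof that a Borel--Cantelli sequence, when restricted to a cofinite (or merely infinite) subset, is still Borel--Cantelli, so one can run the extraction inside any infinite reservoir.

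With that tool in hand, the construction of $f$ is a standard diagonal amalgamation. Enumerate $\omega^2 = \{(k_t, m_t) : t \in \omega\}$. Split $\omega$ into two infinite halves $\omega = A \sqcup R$ with $R$ (the "reserve") infinite; we will only ever use indices from $A$, guaranteeing $\omega \setminus \rng(f) \supseteq R$ is infinite. Maintain a growing finite set $U_t \subseteq A$ of already-used indices. At stage $t$, to handle $(k,m) = (k_t, m_t)$: the indices not yet used, $A \setminus U_t$, form an infinite set; apply the extraction tool above with parameter $k$ to obtain disjoint $2k$-blocks $B_0, B_1, \ldots \subseteq A\setminus U_t$ with divergent associated series; define $f(k,m,n,i)$ for $i = 0,\ldots,2k-1$ to run through $B_n$ (first $k$ coordinates hitting the "$p$-part", last $k$ the "$(1-p)$-part"), add $\bigcup_n B_n$ to the used set. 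One subtlety: each stage wants to consume infinitely many indices of $A$, so a naive "use everything" would leave nothing for later stages. The fix is to interleave: at stage $t$ only commit finitely many blocks, and revisit each task $(k,m)$ cofinally often along a second diagonalization — i.e. enumerate not $\omega^2$ but $\omega^3 = \{(k,m,\ell)\}$ and at the $\langle k,m,\ell\rangle$-th step append one more fresh block to the list for $(k,m)$, always drawing from the current infinite supply $A \setminus U$. Since $A$ is infinite and only finitely many indices are consumed per step, the supply stays infinite forever, injectivity is automatic (fresh indices each time), and the per-$(k,m)$ series is an infinite sum of the nonnegative terms contributed across all its steps, which diverges because we chose the blocks so that the partial contributions themselves sum to $\infty$.

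The main obstacle — the place requiring genuine care rather than bookkeeping — is the extraction tool: proving that inside \emph{any} infinite subset of $\omega$ one can still find disjoint $2k$-blocks whose mixed products $\prod p_j \prod (1-p_j)$ sum to infinity. Passing to an infinite subset can destroy the hypotheses $\sum p_n^k = \infty$ and $\sum (1-p_n)^k = \infty$ simultaneously only if the subset is badly chosen, but here the subset is \emph{cofinite in $A$}, and $A$ was fixed once at the start; so the clean move is to choose $A$ itself to be Borel--Cantelli (possible since one can split a Borel--Cantelli sequence into two Borel--Cantelli subsequences — a short pigeonhole on which of $p_n^k$, $(1-p_n)^k$ carries the divergence, done for all $k$ at once by a further diagonalization), and then note that removing finitely many indices preserves being Borel--Cantelli. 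Granting that $A\setminus U_t$ is Borel--Cantelli for every $t$, the block extraction for fixed $k$ is exactly the argument underlying the "if $\mathbf p$ is Borel--Cantelli then the Rado graph is strongly drawable" direction already invoked above, specialized to a single value of $k$, so it may simply be cited.
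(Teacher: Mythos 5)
First, a remark on the comparison you were asked to make: the paper itself contains no proof of this lemma --- it is quoted verbatim, with a citation, from Lemma~2.3 of the source on random constructions of the Rado graph --- so your proposal can only be judged on its own merits, not against an in-paper argument.

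On its own merits, the bookkeeping half of your plan is sound: reserving an infinite set $R$, fixing once and for all a Borel--Cantelli subsequence indexed by $A=\omega\setminus R$, and serving the tasks $(k,m)$ along a diagonal schedule that consumes only finitely many fresh indices per visit (so that the unused part of $A$ is always $A$ minus a finite set, hence still Borel--Cantelli) gives injectivity and a co-infinite range; the only small repair is that each visit should append finitely many blocks with total contribution at least $1$, not a single block. The genuine gap is the ``extraction tool'' itself: for fixed $k$, producing inside the reservoir infinitely many pairwise disjoint $2k$-blocks whose mixed products $\prod p_j\prod(1-p_j)$ have divergent sum. That is precisely the content of the lemma you are proving (it is what the cited Lemma~2.3 feeds into the Rado-graph theorem, not a consequence of it), so ``it may simply be cited'' is circular. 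Moreover, the route you sketch --- extract $p$-blocks and $(1-p)$-blocks separately from the two divergence hypotheses and then pair them via an AM--GM estimate --- fails in general: if the two extracted sequences of block-products are $x_m$ and $y_m$ with $x_m=y_m=1/m$ (both divergent), then for every injective pairing $\pi$ one has $\sum_m x_m y_{\pi(m)}\le\tfrac12\bigl(\sum_m x_m^2+\sum_m y_m^2\bigr)<\infty$, so no pairing of independently chosen blocks is guaranteed to work; the $p$-half and the $(1-p)$-half of a block must be chosen in a correlated way. A correct argument needs a case split of roughly the following shape: if both $\{n:p_n\ge\tfrac12\}$ and $\{n:p_n<\tfrac12\}$ are infinite, build each block from $k$ indices of the first kind and $k$ of the second, so every block contributes at least $2^{-2k}$ and any infinite disjoint family works; if instead, say, all but finitely many $p_n\ge\tfrac12$, use $\sum_n(1-p_n)^k=\infty$ together with a grouping lemma (sort indices into dyadic levels of $1-p_n$ and take $k$-sets within a level) to get disjoint $k$-sets $D_m$ with $\sum_m\prod_{j\in D_m}(1-p_j)=\infty$, and complete each $D_m$ by $k$ fresh indices with $p_j\ge\tfrac12$, which multiplies the sum by at most $2^{-k}$. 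Without some such argument the heart of the lemma is left unproved in your proposal.
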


The above lemma essentially ensures that if $(p_n)_{n\in\omega}$ is a Borel--Cantelli sequence, then we may plan some finite pairwise blocks of edges and non-edges and be sure that infinitely many of them occurred.

Motivated by \cite{bergfalk2018ramsey}, we introduce the following notion for the set of those graphs that can be obtained from a fixed sequence $\textbf{p} \in (0,1)^\omega$ by variously distributing its values among possible edges.

\begin{df}
For $\textbf{p} \in (0,1)^\omega$, we define 
$$\Sigma_{\mathbf{p}}= \left\{G\in 2^{[\omega]^2} : G\text{ is strongly } \{\textbf{p}\}-\text{drawable}\right\}.$$
\end{df}
\begin{prop}\label{p:Non-BC}
Suppose that $p_n \notin \{0,1\}$ for all $n$'s. If $\sum\limits_n p_n<\infty$ or $\sum\limits_n \left(1-p_n\right)<\infty$, then $\Sigma_{\mathbf{p}}=\varnothing$.
\end{prop}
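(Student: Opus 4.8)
The plan is to combine the first (``convergence'') Borel--Cantelli lemma with the elementary fact that $\prod_{n}(1-p_n)>0$ whenever $p_n\in[0,1)$ for all $n$ and $\sum_n p_n<\infty$. I would fix once and for all an arbitrary graph $G$ on $\omega$ and an arbitrary bijection $\sigma\colon\omega\to[\omega]^2$, put $\mu:=\mu_{\sigma(\mathbf p)}$, and show that $\mu(\iso(G))<1$. Since $G$ and $\sigma$ are arbitrary, this proves that no $G$ is strongly $\{\mathbf p\}$-drawable, i.e.\ $\Sigma_{\mathbf p}=\varnothing$.

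I would treat the case $\sum_n p_n<\infty$ first. Under $\mu$ the pair $e\in[\omega]^2$ is an edge with probability $p_{\sigma^{-1}(e)}$, and $\sum_{e\in[\omega]^2}p_{\sigma^{-1}(e)}=\sum_{n\in\omega}p_n<\infty$, so by the first Borel--Cantelli lemma $\mu$-almost every $E\in 2^{[\omega]^2}$ has only finitely many edges. Then I would split according to $G$. If $G$ has infinitely many edges, then $\iso(G)$ is contained in the $\mu$-null set of graphs with infinitely many edges, hence $\mu(\iso(G))=0<1$. If $G$ has exactly $k<\omega$ edges, then, as graph isomorphisms preserve the number of edges, $\iso(G)\subseteq A_k:=\{E\in 2^{[\omega]^2}:\#E=k\}$, so it suffices to see $\mu(A_k)<1$. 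Writing $q:=\prod_{n}(1-p_n)$, we have $q=\mu(A_0)=\mu(\{\varnothing\})$; from $\sum_n p_n<\infty$ we get $q>0$, while $p_0\in(0,1)$ gives $q\le 1-p_0<1$. If $k=0$ then $\mu(A_k)=q<1$; if $k\ge 1$ then $\mu(A_k)\le 1-\mu(A_0)=1-q<1$. In every case $\mu(\iso(G))<1$.

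For the case $\sum_n(1-p_n)<\infty$ I would reduce to the previous one by passing to complements. Let $\mathbf q=(1-p_n)_{n\in\omega}$ (still with all coordinates in $(0,1)$, and now $\sum_n q_n<\infty$) and let $\phi\colon 2^{[\omega]^2}\to 2^{[\omega]^2}$ be the involution $\phi(E)=[\omega]^2\setminus E$. Then $\phi$ pushes $\mu_{\sigma(\mathbf p)}$ forward to $\mu_{\sigma(\mathbf q)}$ (each coordinate probability is replaced by its complement, and independence is preserved), and $\phi(\iso(G))=\iso(G^{c})$ for the complement $G^{c}$ of $G$, since a bijection of $\omega$ is an isomorphism onto $G$ exactly when it is an isomorphism onto $G^{c}$ after complementing. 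Hence $\mu_{\sigma(\mathbf p)}(\iso(G))=\mu_{\sigma(\mathbf q)}(\iso(G^{c}))$, and the previous paragraph (applied to $\mathbf q$, $G^c$, $\sigma$) yields $\mu_{\sigma(\mathbf q)}(\iso(G^{c}))<1$. Again $G$ and $\sigma$ were arbitrary, so $\Sigma_{\mathbf p}=\varnothing$.

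I do not expect a genuine obstacle here; the only point that needs a moment's care is that the conclusion must cover finite graphs, in particular the empty graph, which one might naively expect to acquire full measure when the $p_n$ are tiny. It does not: there is still positive probability $1-q>0$ of drawing at least one edge (and, when the target has $k\ge 1$ edges, positive probability $q>0$ of drawing none). These two facts are exactly the inequalities $q\le 1-p_0<1$ and $q>0$ used above, and everything else is a routine application of Borel--Cantelli.
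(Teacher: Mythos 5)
Your proof is correct and follows essentially the same route as the paper: the first Borel--Cantelli lemma forces almost all realizations to have finitely many edges, and a complementary event of positive probability (you use $\prod_n(1-p_n)>0$, the paper uses $\prod_{n\le k}p_n>0$) then rules out $\mu(\iso(G))=1$. Your explicit pushforward-under-complementation argument for the case $\sum_n(1-p_n)<\infty$ is just a spelled-out version of the paper's ``the other case is symmetric.''
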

\begin{proof}
    Suppose that $\sum\limits_n p_n<\infty$ and $G\in \Sigma_{\mathbf{p}}$. Note that by the first Borel--Cantelli lemma $G$ has only finitely many edges, yet probability that $G$ has more that $k$ edges is greater than $\prod_{n=0}^k p_n>0$. The other case is symmetric.
\end{proof}
\subsection{Sequences of probabilities}

There are several classes of such families $\mathbf{p}$ that are suitable for the context of randomly generated graphs. Not all classes of sequences introduced below are used subsequently; nevertheless, their distinction provides a useful organizational framework and places the results in a broader context.
\begin{itemize}
    \item $\Acc_{\{0,1\}}$ is the class of sequences $(p_n)_{n\in\omega} \in (0,1)^\omega$ that contain both $0$ and $1$ as accumulation points.
    \item $\on{BC}_0$ is the class of sequences $(p_n)_{n\in\omega} \in (0,1)^\omega$ that are Borel--Cantelli and convergent to $0$.
    \item $\on{BC}_1$ is the class of sequences $(p_n)_{n\in\omega} \in (0,1)^\omega$ that are Borel--Cantelli and convergent to $1$.
    \item $\on{BC_{M0}}$ is the class of sequences $(p_n)_{n\in\omega} \in (0,1)^\omega$ that are Borel--Cantelli and not isolated from $0$.
    \item $\on{BC_{M1}}$ is the class of sequences $(p_n)_{n\in\omega} \in (0,1)^\omega$ that are Borel--Cantelli and not isolated from $1$.
    \item $\on{Sep}$  is the class of sequences $(p_n)_{n\in\omega} \in (0,1)^\omega$ separated from both zero and one.
\end{itemize}

The classes $\on{BC}_0$, $\on{BC}_1$, $\on{BC}_{M0}$ and $\on{BC}_{M1}$ are pairwise dual, in the sense that results established for one class can be translated directly to its dual counterpart.

\black

\section{General facts about drawable graphs}\label{section General facts about drawable graphs}
The following lemma is a variant of Lemma~2.5.1 from~\cite{BartoszynskiJudah1995}, adapted to the setting of asymmetric measures.  
For completeness, we provide a proof in Section \ref{section APPENDIX}.

    \begin{lemma}\label{bartoszynskilemma}
    Let $A\s 2^\omega$ and $\mathbf{p}\in (0,1)^\omega$. 
    Then $\mu_{\mathbf{p}}(A)=0$ if and only if there exist sets $F_n \s 2^n$ such that
    \begin{itemize}
        \item $A\s \{x \in 2^\omega : \exists_n^\infty \; x \restriction_n \in F_n\}$,
        \item $\sum\limits_{n=1}^{\infty}\mu_{\mathbf{p}}([F_n])<\infty$.
    \end{itemize}
\end{lemma}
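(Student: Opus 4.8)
The plan is to mimic the proof of Lemma 2.5.1 in \cite{BartoszynskiJudah1995}, replacing the uniform measure on $2^\omega$ by the biased product measure $\mu_{\mathbf{p}}$ throughout, and checking that the estimates only use finiteness of a tail sum rather than any particular value of the coordinates. Recall that for a set $F_n\s 2^n$ we have $\mu_{\mathbf{p}}([F_n])=\sum_{s\in F_n}\prod_{k<n}w_k(s(k))$, where $w_k(1)=p_k$, $w_k(0)=1-p_k$; write $\mu_n$ for this "$n$-th level" measure on subsets of $2^n$, so $\mu_{\mathbf{p}}([F_n])=\mu_n(F_n)$.

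For the easy ($\Leftarrow$) direction, suppose such $F_n$ exist. The set $\{x : \exists^\infty_n\, x\restriction_n\in F_n\}$ equals $\bigcap_N\bigcup_{n\geqslant N}[F_n]$, and by $\sigma$-subadditivity $\mu_{\mathbf{p}}\big(\bigcup_{n\geqslant N}[F_n]\big)\leqslant\sum_{n\geqslant N}\mu_{\mathbf{p}}([F_n])\to 0$ as $N\to\infty$ by the convergence of the series; hence the intersection is null, and so is $A$ as a subset of it.

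For the harder ($\Rightarrow$) direction, assume $\mu_{\mathbf{p}}(A)=0$. First enlarge $A$ to an open set: pick open $U\supseteq A$ with $\mu_{\mathbf{p}}(U)<\infty$ small, and more usefully, for each $j$ pick open $U_j\supseteq A$ with $\mu_{\mathbf{p}}(U_j)<4^{-j}$ (or $2^{-j}$, to be tuned). Each $U_j$ is a countable union of disjoint basic clopen cylinders; collect the cylinders appearing in $U_j$ and sort them by length, letting $G^j_n\s 2^n$ be the set of strings $s$ of length $n$ such that $[s]$ is one of these cylinders. Then $\mu_{\mathbf{p}}(U_j)=\sum_n\mu_n(G^j_n)$, so $\sum_n\mu_n(G^j_n)<4^{-j}$. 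Now amalgamate across $j$: set $F_n=\bigcup_{j\leqslant n}G^j_n$ (the diagonal trick). Then $\mu_n(F_n)\leqslant\sum_{j\leqslant n}\mu_n(G^j_n)$, and summing over $n$ and exchanging the order of summation gives $\sum_n\mu_n(F_n)\leqslant\sum_j\sum_{n\geqslant j}\mu_n(G^j_n)\leqslant\sum_j 4^{-j}<\infty$, which is the second bullet. For the first bullet, fix $x\in A$. For every $j$ we have $x\in U_j$, so some initial segment $x\restriction_{n_j}$ lies in $G^j_{n_j}$; since $j\leqslant n_j$ can be arranged (if the witnessing cylinder is too short, pass to a longer one inside it — here one must note that $G^j_n$ can be taken closed under such refinement, or simply observe that shortening/lengthening cylinders inside $U_j$ keeps the measure sum controlled), we get $x\restriction_{n_j}\in F_{n_j}$ for infinitely many values $n_j$ (they are unbounded since $n_j\geqslant j$), i.e. $\exists^\infty_n\, x\restriction_n\in F_n$. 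Hence $A\s\{x:\exists^\infty_n\,x\restriction_n\in F_n\}$.

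The main obstacle is the bookkeeping in the $(\Rightarrow)$ direction ensuring that the witnessing initial segments occur at \emph{infinitely many} distinct levels $n$, not just at one level per $j$ with possible repetition — this is exactly why one needs the indices $n_j$ to be forced to grow (the condition $j\leqslant n_j$), and why $F_n$ must be allowed to absorb refinements of short cylinders without blowing up $\sum_n\mu_n(F_n)$. The asymmetry of $\mathbf{p}$ plays no real role here: every step uses only monotonicity, countable (sub)additivity, and the fact that $\mu_{\mathbf{p}}$ of a cylinder $[s]$ is the product $\prod_{k<|s|}w_k(s(k))$, so the level-decomposition $\mu_{\mathbf{p}}([F])=\mu_n(F)$ for $F\s 2^n$ is valid verbatim. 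One should double-check that $\mu_{\mathbf{p}}([s])>0$ for every $s$ (true since $p_k\in(0,1)$), so that no cylinder is silently discarded; but this is not needed for either inequality.
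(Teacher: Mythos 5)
Your proposal is correct and follows essentially the same route as the paper's proof: cover the null set by open sets $U_j$ of summable measure, decompose them into basic clopen cylinders, collect the cylinders level by level to form the $F_n$, and prove the converse by the tail-sum (Borel--Cantelli type) estimate. The only difference is bookkeeping: you force the witnessing levels to be unbounded by refining the cylinders of $U_j$ to length at least $j$ (the diagonal trick), whereas the paper obtains the same unboundedness from the positivity of cylinder measures (each $p_k\in(0,1)$), a point it leaves as ``evident''.
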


Although the measures $\mu_{\mathbf{p}}$ are not, in general, invariant, or even quasi-invariant\footnote{meaning that translations of null sets remain null}, they are quasi-invariant with respect to the "rational" translations. We give a proof of this fact in the Appendix.

Let $\mathbb Q \s 2^\omega$ be the set of all sequences with only finitely many occurrences of $1$.
\begin{lemma}\label{rationals}
    Let  $A\s 2^\omega$ satisfy $\mu_{\mathbf{p}}(A)=0$ for some $\mathbf{p} \in (0,1)^\omega$, and let $q \in \mathbb{Q}\subset 2^\omega$ be any sequence with only finitely many occurrences of 1. Then $\mu_{\mathbf{p}}(A +_2 q)=0$.
\end{lemma}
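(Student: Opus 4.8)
The plan is to reduce the statement to the case of a single coordinate flip, $q = e_k$ (the sequence with a unique $1$ in position $k$), and then iterate, since any $q \in \mathbb{Q}$ is a finite sum of such $e_k$'s and $+_2$ is associative. So it suffices to show: if $\mu_{\mathbf{p}}(A) = 0$ then $\mu_{\mathbf{p}}(A +_2 e_k) = 0$ for every fixed $k \in \omega$.

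For the single-flip case I would use the characterization from Lemma~\ref{bartoszynskilemma}. Since $\mu_{\mathbf{p}}(A) = 0$, pick sets $F_n \subseteq 2^n$ with $A \subseteq \{x : \exists^\infty_n \; x\restriction_n \in F_n\}$ and $\sum_n \mu_{\mathbf{p}}([F_n]) < \infty$. Now define $F_n' \subseteq 2^n$ by flipping the $k$-th coordinate of every element of $F_n$ when $n > k$, and setting $F_n' = F_n$ for $n \le k$ (these finitely many terms are irrelevant to both conditions). Then $x \restriction_n \in F_n' \iff (x +_2 e_k)\restriction_n \in F_n$ for $n > k$, so $A +_2 e_k \subseteq \{x : \exists^\infty_n \; x \restriction_n \in F_n'\}$. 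It remains to check $\sum_n \mu_{\mathbf{p}}([F_n']) < \infty$, and for this I would show the termwise comparison $\mu_{\mathbf{p}}([F_n']) \le C \cdot \mu_{\mathbf{p}}([F_n])$ for a constant $C$ depending only on $p_k$. Indeed, the map $x \mapsto x +_2 e_k$ is a bijection of $2^\omega$ that multiplies the $\mu_{\mathbf{p}}$-measure of a cylinder by a factor of either $p_k/(1-p_k)$ or $(1-p_k)/p_k$ depending on the $k$-th bit, and since $p_k \in (0,1)$ both factors are bounded by $C := \max\{p_k/(1-p_k), (1-p_k)/p_k\} < \infty$. This gives $\mu_{\mathbf{p}}([F_n']) \le C\,\mu_{\mathbf{p}}([F_n])$, so the series still converges, and Lemma~\ref{bartoszynskilemma} yields $\mu_{\mathbf{p}}(A +_2 e_k) = 0$.

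Then I would iterate: write $q = e_{k_1} +_2 \cdots +_2 e_{k_j}$ with the $k_i$ distinct, and apply the single-flip result $j$ times, using that $(A +_2 e_{k_1} +_2 \cdots) +_2 e_{k_{i+1}} = A +_2 (e_{k_1} +_2 \cdots +_2 e_{k_{i+1}})$ by associativity of coordinatewise addition mod $2$. Since $j$ is finite, $\mu_{\mathbf{p}}(A +_2 q) = 0$, which is exactly the claim.

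The main obstacle — really the only non-routine point — is the measure-scaling estimate for a single coordinate flip, i.e. controlling how $\mu_{\mathbf{p}}$ of a cylinder changes when one fixed bit is toggled. This is genuinely where the hypotheses $p_k \in (0,1)$ (rather than $\{0,1\}$) and the restriction to \emph{finitely} many flips (so that the constant $C$ does not blow up to infinity through an infinite product) are both used; if one allowed infinitely many flips the product $\prod_k \max\{p_k/(1-p_k),(1-p_k)/p_k\}$ would typically diverge, which is precisely why full quasi-invariance fails and only quasi-invariance under $\mathbb{Q}$-translations holds. An alternative, perhaps cleaner, route that avoids Lemma~\ref{bartoszynskilemma} altogether is to observe directly that on the cylinder $\sigma$-algebra the pushforward measure $(+_2 q)_* \mu_{\mathbf{p}}$ equals $\mu_{\mathbf{p}'}$ where $\mathbf{p}'$ agrees with $\mathbf{p}$ except that finitely many coordinates $p_{k_i}$ are replaced by $1 - p_{k_i}$; since $\mathbf{p}$ and $\mathbf{p}'$ differ in only finitely many coordinates, the measures $\mu_{\mathbf{p}}$ and $\mu_{\mathbf{p}'}$ are mutually absolutely continuous (the Radon–Nikodym density is a bounded, bounded-away-from-zero function depending on finitely many coordinates), hence share the same null sets — and $(+_2 q)_*\mu_{\mathbf{p}}(A) = \mu_{\mathbf{p}}(A +_2 q)$, so $\mu_{\mathbf{p}}(A) = 0$ forces $\mu_{\mathbf{p}}(A +_2 q) = 0$. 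I would likely present this second argument as the main proof and relegate the Lemma-based one to a remark.
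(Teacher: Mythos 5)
Your proposal is correct, and it actually contains two arguments. Your first argument (single-bit flips via Lemma~\ref{bartoszynskilemma}, iterated finitely many times) is essentially the paper's proof, except that the paper does all the flips at once: it translates the covering sets $F_n$ by $q$ directly and bounds $\mu_{\mathbf{p}}([F_n+_2q])\leq C\,\mu_{\mathbf{p}}([F_n])$ with the single finite constant $C=\prod_{i:\,q(i)=1}\max\{p_i,1-p_i\}/\min\{p_i,1-p_i\}$, so no induction on coordinates is needed; the content is the same. Your preferred second argument is a genuinely different route: identifying the pushforward of $\mu_{\mathbf{p}}$ under $x\mapsto x+_2q$ as the product measure $\mu_{\mathbf{p}'}$ in which the finitely many coordinates in the support of $q$ have $p_i$ replaced by $1-p_i$, and then invoking mutual absolute continuity of product measures differing in finitely many coordinates (bounded, bounded-below density depending on finitely many coordinates). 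This bypasses Lemma~\ref{bartoszynskilemma} entirely, is arguably more conceptual, and fits naturally with the Kakutani dichotomy the paper quotes later in the appendix; what the paper's route buys is that it works verbatim for an arbitrary (not necessarily Borel) null set $A$, since Lemma~\ref{bartoszynskilemma} is stated for arbitrary subsets, whereas in your pushforward argument you should add the routine remark that one first encloses $A$ in a Borel null set $B$ and translates $B$ (or works with outer measure/the completion). With that small caveat both of your arguments are sound.
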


\begin{theorem}\label{tail-sets}
    If $E \s 2^\omega$ and $\mu_{\mathbf{p}}(E)=1$ for $\mathbf{p} \in (0,1)^\omega$, then there exists $E'\s E$ such that $\mu_{\mathbf{p}}(E')=1$, and $E'$ is a tail-set, i.e. is invariant for finite modifications.
\end{theorem}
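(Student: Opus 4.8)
The plan is to define $E'$ to be the largest tail-set contained in $E$, namely
\[
E' = \{x \in 2^\omega : \forall\, s \in 2^{<\omega}\ (x +_2 s \in E)\} = \bigcap_{s \in 2^{<\omega}} (E +_2 s).
\]
(Here I use the identification of $2^{<\omega}$ with $\mathbb{Q} \subseteq 2^\omega$, so that $x +_2 s$ makes sense as coordinate-wise addition mod $2$ of a sequence by a finitely-supported sequence.) By construction $E' \subseteq E$ (take $s$ the empty sequence), and $E'$ is a tail-set: if $x \in E'$ and $t \in 2^{<\omega}$, then for every $s \in 2^{<\omega}$ we have $(x +_2 t) +_2 s = x +_2 (t +_2 s) \in E$ since $t +_2 s$ is again finitely supported, so $x +_2 t \in E'$; the converse direction ($x +_2 t \in E' \Rightarrow x \in E'$) is the same computation with $t$ replaced by itself, using that addition of a finitely-supported sequence is an involution.

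The only substantive point is that $\mu_{\mathbf{p}}(E') = 1$. Since $2^{<\omega}$ is countable, it suffices to show $\mu_{\mathbf{p}}(E +_2 s) = 1$ for each fixed $s \in 2^{<\omega}$; then $E'$ is a countable intersection of full-measure sets and hence has full measure. But $\mu_{\mathbf{p}}(E) = 1$ means $\mu_{\mathbf{p}}(2^\omega \setminus E) = 0$, and $2^\omega \setminus E$ is a null set; $s$ (viewed in $\mathbb{Q}$) has only finitely many occurrences of $1$, so Lemma~\ref{rationals} applies and gives $\mu_{\mathbf{p}}((2^\omega \setminus E) +_2 s) = 0$. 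Since translation by $s$ is a bijection of $2^\omega$ and $+_2 s$ is an involution, $(2^\omega \setminus E) +_2 s = 2^\omega \setminus (E +_2 s)$, so $\mu_{\mathbf{p}}(E +_2 s) = 1$, as needed.

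I expect the main (and essentially only) obstacle to be bookkeeping rather than mathematics: one must be careful that "invariant under finite modifications" for a subset of $2^\omega$ is exactly the tail-set condition as defined in the Preliminaries (closure under $+_2 s$ for $s \in 2^{<\omega}$), and that $+_2 s$ permutes $2^\omega$ and commutes with complementation, so that a null set stays null and a conull set stays conull under such a translation. Once Lemma~\ref{rationals} is invoked, the countable-intersection argument closes everything with no estimates required.
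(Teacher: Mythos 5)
Your proposal is correct and is essentially the paper's argument in dual form: the paper covers the null set $A=2^\omega\setminus E$ by the null tail-set $A+_2\mathbb{Q}$ (using Lemma~\ref{rationals} and countability of $\mathbb{Q}$), and your $E'=\bigcap_{s\in 2^{<\omega}}(E+_2 s)$ is exactly the complement of that covering set. No gap; the same lemma and the same countability argument do all the work.
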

\begin{proof}
    Clearly the conclusion is equivalent to the following: every $\mu_{\textbf{p}}$-null set can be covered by a $\mu_{\textbf{p}}$-null tail-set. 
    
    Fix any $\mu_{\textbf{p}}$-null set $A$. By Lemma \ref{rationals}, the set $A+_2\mathbb Q$ is $\mu_{\textbf{p}}$-null as well, and clearly is a tail-set.
\end{proof}

\begin{theorem}\label{t:CharGrafLos1}
    A graph is weakly $(0,1)^{[\omega]^2}$-drawable if and only if it is invariant under finite modifications.
\end{theorem}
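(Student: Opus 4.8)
The plan is to prove both implications. For the direction ($\Leftarrow$): suppose $G$ is invariant under finite modifications. I would like to show $\iso(G)=\iso^*(G)$ has measure one for some product measure $\mu_{\mathbf p}$. The natural candidate is to take $\mathbf p$ to be a Borel--Cantelli sequence, say the constant sequence $p_n=\tfrac12$, so that $\mu_{\mathbf p}(\iso(\RR))=1$ by the Coregliano--Levine theorem. But that only shows the Rado graph is drawable, not an arbitrary $G$. So instead the argument must be: the hypothesis ``$G$ invariant under finite modifications'' is in fact a very strong structural restriction, and I expect one should prove that the only countably infinite graph invariant under all finite modifications is (up to isomorphism) the Rado graph $\RR$ itself, while the only finite ``graph'' trivially satisfying this is handled separately (or excluded, since $V=\omega$). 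Indeed, if $G$ is invariant under finite modifications, pick any finite $A\subseteq\omega$ and any finite graph $H$ on vertex set $A\cup\{v\}$ extending $G\restriction_A$; modifying finitely many edges incident to a vertex $v$ outside $A$ (there are infinitely many such $v$, and each requires only a finite modification) produces a graph isomorphic to $G$ in which $v$ realizes the desired one-point extension type over $A$. Hence $G$ has the extension property, so $G\cong\RR$, and then drawability follows from the cited theorem applied to any Borel--Cantelli $\mathbf p$ (e.g. $p_n\equiv\tfrac12$) together with a bijection $\sigma:\omega\to[\omega]^2$.

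For the direction ($\Rightarrow$): suppose $G$ is weakly $(0,1)^{[\omega]^2}$-drawable, so $\mu_{\mathbf p}(\iso(G))=1$ for some $\mathbf p\in(0,1)^{[\omega]^2}$. Transport everything through a bijection $\sigma:\omega\to[\omega]^2$ to view $\iso(G)$ as a subset of $2^\omega$ of full $\mu_{\mathbf p}$-measure (abusing notation, $\mathbf p\in(0,1)^\omega$). By Theorem~\ref{tail-sets} there is a tail-set $E'\subseteq\iso(G)$ with $\mu_{\mathbf p}(E')=1$; in particular $E'$ is invariant under finite modifications of the edge set. Now I must transfer this ``invariance under finitely many edge flips at fixed positions'' to ``invariance under finite graph modifications up to isomorphism'': since $E'$ is nonempty, fix $E_0\in E'$; then $(\omega,E_0)\cong G$, and for any finite symmetric difference $D\subseteq[\omega]^2$ the set $E_0\triangle D$ again lies in $E'\subseteq\iso(G)$, so $(\omega,E_0\triangle D)\cong G$. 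Thus every finite modification of (a representative of) $G$ is isomorphic to $G$, i.e. $G$ is invariant under finite modifications.

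The step I expect to be the main obstacle is the structural claim in the ($\Leftarrow$) direction that invariance under finite modifications forces $G\cong\RR$ — making precise that a single one-point extension type over a finite set $A$ can always be realized by flipping only finitely many edges (namely only edges incident to one cleverly chosen vertex $v\notin A$), and checking this gives the full Alice's-restaurant extension property. One must be slightly careful that ``finite modification'' in the paper's sense means altering a finite edge set of a graph on the \emph{same} vertex set $\omega$, and that the definition of ``invariant under finite modifications'' ($\iso(G)=\iso^*(G)$) is exactly what is needed. A minor secondary point is the bookkeeping with the bijection $\sigma$ — that $\mu_{\mathbf p}$-null tail-sets in $2^\omega$ correspond to isomorphism-invariant-under-finite-edge-modification sets in $2^{[\omega]^2}$ regardless of which $\sigma$ is used — but this is routine once the correspondence is set up, since ``finite subset of $[\omega]^2$'' is preserved by any bijection.
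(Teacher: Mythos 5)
Your ($\Rightarrow$) direction is correct and is essentially the paper's own argument: pass to a full-measure tail-set inside $\iso(G)$ via Theorem \ref{tail-sets}, pick a representative, and observe that all its finite modifications stay in $\iso(G)$. The genuine gap is in your ($\Leftarrow$) direction, where you claim that invariance under finite modifications forces $G\cong\RR$ and then invoke the Coregliano--Levine theorem. That structural claim is false: the graph $\UU_{\fin}$ of Definition \ref{P:GIG} (the isolated union of all finite graphs, each appearing infinitely often) is invariant under finite modifications --- a finite modification can only alter or merge finitely many components, so the result still has no infinite component and still contains infinitely many isolated copies of every finite graph, hence is isomorphic to $\UU_{\fin}$ by Proposition \ref{r:GIGunique} --- yet it is not the Rado graph, since no vertex of $\UU_{\fin}$ is adjacent to two vertices lying in distinct components, so the extension property fails. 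The flaw in your argument for the claim is the step where you flip finitely many edges at a vertex $v\notin A$: this produces a graph $G'$ isomorphic to $G$ in which $v$ witnesses the desired extension over $A$, but the isomorphism $G'\cong G$ need not fix $A$ pointwise, so you have only shown that \emph{some} copy of $G$ realizes the type over the \emph{image} of $A$, not that $G$ itself realizes every one-point extension over every finite subset of its own vertex set. Consequently your ($\Leftarrow$) proof cannot produce a measure for graphs such as $\UU_{\fin}$, which the theorem must cover.

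The paper's ($\Leftarrow$) argument avoids the Rado graph entirely and tailors the measure to $G$: enumerate $[\omega]^2$ as $\langle e_n : n<\omega\rangle$ with the even-indexed terms listing $E(G)$ and the odd-indexed terms listing the non-edges, and set $p_{2n}=1-2^{-n}$, $p_{2n+1}=2^{-n}$. Since $\sum_n 2^{-n}<\infty$, the first Borel--Cantelli lemma gives that almost surely only finitely many coordinates disagree with the plan, so the random graph is almost surely a finite modification of $G$ and hence, by the hypothesis of invariance under finite modifications, isomorphic to $G$. If you want to salvage your outline, replace the ``$G\cong\RR$'' step by this direct construction; the rest of your write-up (the $\sigma$-bookkeeping and the forward direction) stands.
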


\begin{proof}

Assume first that $G$ is weakly drawable, i.e. the set $\iso(G)$ has measure one with respect to some product measure $\mu_\mathbf{p}$ with $\textbf{p}\in(0,1)^{[\omega]^2}$. By Theorem \ref{tail-sets}, we can fix a tail-set $\mathcal{E} \s \iso(G)$ of full measure. Pick any $G_0 \in \mathcal{E}$. By definition of $\iso(G)$, $G_0$ is a graph isomorphic to $G$. Let $G_1$ be a finite modification of $G_0$. Then $G_1 \in \mathcal{E} \s \iso(G)$, and so $G_1$ is isomorphic to $G$ as well.

For the other implication, assume that $G$ is invariant under finite modifications. We will show that $G$ is weakly drawable.

Without loss of generality we can assume that $G=(\omega,\, E)$, where $E\s [\omega]^2$. Let $\langle e_n : n<\omega \rangle$ be a bijective enumeration of $[\omega]^2$ such that $\langle e_{2n} : n<\omega \rangle$ enumerates $E$, and $\langle e_{2n+1} : n<\omega \rangle$ enumerates $[\omega]^2 \setminus E$.

We define a family of probabilities $\{p_n : n \in \omega\}$ by the conditions:

\begin{enumerate}
    \item $p_{2n+1}= 2^{-n}$,
    \item $p_{2n}=1-2^{-n}$.
\end{enumerate}

By first Borel--Cantelli lemma, with probability $1$ the resulting random graph  will differ from $G$ only on a finite set of edges, therefore will be isomorphic to $G$.
\end{proof}

Note that the above example shows something slightly more general. If there exists \(A\subseteq\omega\) such that 
$$\sum\limits_{n\in A} p_n<\infty \text{ and }  \sum\limits_{n\notin A}(1-p_n)<\infty,$$
then any prescribed graph can be realized (up to a finite modification): edges occur almost surely on the coordinates in \(A\) where they are planned, and non-edges occur almost surely on the coordinates outside \(A\).  Sequences of this form therefore greatly reduce the randomness of the construction.  Consequently, it is particularly interesting to consider those sequences $(p_n)$ for which there is no $A\subseteq \omega$ satisfying $\sum\limits_{n\in A} p_n<\infty$, $\sum\limits_{n\notin A}1- p_n <\infty$. To avoid other trivialities, we also should focus on sequences which are not isolated from both zero and one (i.e. sequences outside $\on{Sep}$), since in such a case we get the Rado graph regardless of the permutation of probabilities.
\begin{df}
A graph \(G\) is \emph{weakly universal} if \(G\) contains every finite graph as an induced subgraph; equivalently, \(G\) is universal for the class of all finite graphs.
\end{df}
From Theorem \ref{t:CharGrafLos1} we immediately obtain the following.
\begin{cor}
    If $G$ is weakly drawable, then it is weakly universal.
\end{cor}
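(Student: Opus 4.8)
The plan is to derive this corollary directly from Theorem~\ref{t:CharGrafLos1}, which tells us that a weakly drawable graph $G$ is invariant under finite modifications. So it suffices to show that any infinite graph on $\omega$ that is invariant under finite modifications must be weakly universal. Fix a finite graph $H$ with vertex set $\{0,\ldots,k-1\}$; we want to find an induced copy of $H$ inside $G$.

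First I would pick any $k$ distinct vertices $v_0,\ldots,v_{k-1}$ of $G$ — this is possible since $G$ is infinite. Let $G_0$ be the finite modification of $G$ obtained by adding the edge $\{v_i,v_j\}$ whenever $\{i,j\}\in E(H)$ and deleting it whenever $\{i,j\}\notin E(H)$, for all $i<j<k$, and leaving every other pair unchanged. This alters only finitely many edges (at most $\binom{k}{2}$ of them), so $G_0$ is a finite modification of $G$, hence isomorphic to $G$ by hypothesis. By construction, the subgraph of $G_0$ induced on $\{v_0,\ldots,v_{k-1}\}$ is isomorphic to $H$ via $v_i\mapsto i$. Since $G_0\cong G$, the graph $G$ also contains an induced copy of $H$. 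As $H$ was arbitrary, $G$ is weakly universal.

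I do not expect any real obstacle here; the only point to be careful about is that "invariant under finite modifications" is being used in the sense defined in the Preliminaries, namely that $G$ is isomorphic to each of its finite modifications (equivalently $\iso(G)=\iso^*(G)$), and that an infinite graph genuinely has infinitely many vertices so that the required $k$ distinct vertices exist. One could alternatively phrase the whole argument probabilistically via the measure $\mu_{\mathbf p}$ witnessing weak drawability together with the tail-set from Theorem~\ref{tail-sets}, but routing through Theorem~\ref{t:CharGrafLos1} is cleaner and is precisely why that theorem was stated first.
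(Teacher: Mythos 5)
Your proposal is correct and follows exactly the route the paper intends: the corollary is stated as an immediate consequence of Theorem~\ref{t:CharGrafLos1}, and your argument simply spells out why invariance under finite modifications yields weak universality (flip the finitely many edges among $k$ chosen vertices to plant an induced copy of $H$, then use the isomorphism with the modified graph). No gaps; the care about the meaning of ``invariant under finite modifications'' and the infinitude of the vertex set is exactly the right level of care.
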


\begin{df}[a universal graph without infinite component]\label{P:GIG}
    The graph $\UU_{\fin}$ is defined as
    $$\UU_{\fin}=\sum\limits_{i<\omega}H_i,$$
    where $\{H_i : i<\omega\}$ is a list of finite graphs, in which each finite graph appears (up to isomorphism) infinitely often.
\end{df}

\begin{prop}\label{r:GIGunique} The graph \(\UU_{\fin}\) from Definition \ref{P:GIG} is the unique countable graph satisfying any of the equivalent conditions: 
\begin{enumerate}
    \item $G$ has no infinite connected component, and for every finite connected graph $F$ there exists infinitely many connected components of $G$ that are isomorphic to $F$;
    \item $G$ has no infinite connected component, and for every finite graph $F$, $G \simeq G+F$;
    \item $G$ has no infinite connected component, and whenever $H$ is at most countable graph without infinite connected component, then $G\simeq G+H$;
\end{enumerate}
\end{prop}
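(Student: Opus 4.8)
The plan is to prove Proposition \ref{r:GIGunique} by a back-and-forth argument establishing that any two graphs satisfying condition (1) are isomorphic, together with a chain of implications $(3)\Rightarrow(2)\Rightarrow(1)\Rightarrow(3)$; since $\UU_{\fin}$ visibly satisfies (1), this yields both uniqueness and the equivalence. First I would verify the easy implications. For $(2)\Rightarrow(1)$: given $(2)$, if some finite connected graph $F$ occurred only finitely often as a component, take $F' = F$ and observe that $G + F$ has strictly more copies of $F$ as a component than $G$ does (the new copy of $F$ sits in a separate part of the isolated union and cannot be absorbed, since all components are finite and isomorphism of $G$ with $G+F$ would have to match components bijectively). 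Hence every finite connected $F$ occurs infinitely often. The implication $(3)\Rightarrow(2)$ is immediate by taking $H = F$. So the real content is $(1)\Rightarrow(3)$, and within that, the heart is the back-and-forth.

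The key step is: if $G$ and $G'$ both satisfy $(1)$, then $G \simeq G'$. I would enumerate the components of $G$ as $\{C_i : i < \omega\}$ and of $G'$ as $\{C'_i : i < \omega\}$ (each component is finite, hence the vertex sets are countable and this makes sense), and build an isomorphism by matching components one at a time. At an odd stage I pick the least unmatched component $C_i$ of $G$; it is isomorphic to some finite connected graph $F$; by $(1)$ applied to $G'$, there are infinitely many components of $G'$ isomorphic to $F$, so at least one is still unmatched, and I pair it with $C_i$ via any isomorphism. At an even stage I do the symmetric thing starting from $G'$. The union of these partial isomorphisms is a graph isomorphism $G \to G'$, because (i) it is a bijection on vertices — every component on each side gets matched by the stage-alternation — and (ii) it preserves and reflects edges, since edges only occur within components and each component-pair is matched by an isomorphism, while vertices in distinct components are non-adjacent on both sides. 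For $(1)\Rightarrow(3)$ proper: given $G$ satisfying $(1)$ and any countable $H$ with no infinite component, observe that $G + H$ also has no infinite component, and for every finite connected $F$ the number of components of $G+H$ isomorphic to $F$ is (number in $G$) $+$ (number in $H$) $= \infty + (\text{something}) = \infty$; so $G+H$ satisfies $(1)$, and by the uniqueness just proved $G + H \simeq G$.

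I expect the main obstacle to be bookkeeping in the back-and-forth rather than any conceptual difficulty: one must be careful that "matching components" genuinely produces a total bijection on vertices, which requires the alternation of stages (pure forth would only guarantee surjectivity onto the matched part of $G'$), and one must phrase the argument so that the finiteness of each component is used exactly where needed — it guarantees that each component's isomorphism is a finite object chosen at a single stage, and that no component can be "split" or "absorbed" across the isolated union. A secondary point worth stating carefully is that in condition $(1)\Rightarrow(3)$ we need $H$ countable (so $G+H$ is countable and the uniqueness statement, which is about countable graphs, applies); the hypothesis "at most countable" in $(3)$ is exactly what makes this go through. Finally, I would note explicitly that $\UU_{\fin}$ satisfies $(1)$ essentially by construction, so the proposition's three conditions all hold of it and characterize it uniquely.
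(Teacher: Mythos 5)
Your proof is correct, and it is organized differently from the paper's. The paper runs the cycle $(1)\Rightarrow(2)\Rightarrow(3)\Rightarrow(1)$ and does all the work by chaining isomorphisms of isolated unions: from (1) it writes $G\simeq\sum_i H_i$ (where $\{H_i\}$ lists every finite graph infinitely often) and then absorbs $\sum_i F$, resp.\ $\sum_i H_i'$, into this sum; the implication $(3)\Rightarrow(1)$ is read off from $G\simeq G+\sum_i F$. The component-matching fact that you isolate as an explicit back-and-forth lemma -- any two countable graphs all of whose components are finite and which realize each finite connected type infinitely often are isomorphic -- is exactly what the paper uses silently in the step $G\simeq\sum_i H_i$, and it is also the only place where the ``unique countable graph'' clause of the proposition actually gets proved; the paper's proof checks the equivalence of the three conditions but leaves that uniqueness implicit, so your treatment is the more complete one on this point. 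You also run the cycle in the opposite direction, which forces a small extra argument at $(2)\Rightarrow(1)$ (the number of components of a fixed isomorphism type is an isomorphism invariant, so it cannot increase under $G\mapsto G+F$); this is sound, whereas the paper avoids it by deducing $(1)$ from $(3)$ directly. In short: same underlying mechanism (finite components, each type with infinite multiplicity, matched componentwise), but your decomposition into an explicit uniqueness lemma plus a reversed implication cycle is a genuinely different write-up, and it buys a cleaner justification of the uniqueness assertion at the cost of slightly more bookkeeping.
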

\begin{proof}
    The item $1.$ is just the definition of $\UU_{\fin}$, so we will check that the conditions are equivalent. Let $\{H_i : i<\omega\}$ be a list of all finite graphs, listing every finite graph infinitely often.
    \begin{enumerate}
        \item[$1.\implies 2.$] Let $F$ be any finite graph. Then 
        $$G\simeq \sum\limits_{i<\omega}H_i \simeq \sum\limits_{i<\omega}H_i+\sum\limits_{i<\omega}F\simeq G+F.$$
        \item[$2.\implies 3.$] Let $H$ be any countable graph without infinite component. Then $H$ is of the form
        $$H\simeq \sum\limits_{i<\omega}H'_i,$$ 
        where all graphs $H'_i$ are finite. Therefore we have
        $$G\simeq \sum\limits_{i<\omega}H_i \simeq \sum\limits_{i<\omega}H_i+\sum\limits_{i<\omega}H'_i\simeq G+H.$$
        \item[$3.\implies 1.$] Assume $F$ is a finite connected graph. We have 
        $$G\simeq G+\sum\limits_{i<\omega}F,$$
        and evidently $G+\sum\limits_{i<\omega}F$ contains infinitely many connected components isomorphic to $F$.
    \end{enumerate}
\end{proof}

\begin{prop}\label{prop:GIG}
The graph $\UU_{\fin}$ is strongly $\on{BC_{M0}}$-drawable.
\end{prop}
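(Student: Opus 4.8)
The plan is to reduce to Proposition~\ref{r:GIGunique}(1): for arbitrary $\mathbf p\in\on{BC_{M0}}$ it suffices to produce a bijection $\sigma\colon\omega\to[\omega]^2$ such that, $\mu_{\sigma(\mathbf p)}$-almost surely, the random graph $G$ on $\omega$ has no infinite connected component and, for every finite connected graph $F$, has infinitely many connected components isomorphic to $F$; for then $G\cong\UU_{\fin}$ almost surely. To build $\sigma$, fix an enumeration $(T_j)_{j\in\omega}$ of finite connected graphs in which every isomorphism type occurs infinitely often, let $I_0,I_1,\dots$ be the consecutive intervals of $\omega$ with $|I_j|=|V(T_j)|$, and fix bijections $\phi_j\colon I_j\to V(T_j)$; call a pair $e\in[I_j]^2$ a \emph{$T_j$-edge} (resp.\ a \emph{$T_j$-non-edge}) if $\phi_j[e]\in E(T_j)$ (resp.\ not). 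This partitions $[\omega]^2$ into $P_{\mathrm{ext}}$, the pairs with endpoints in distinct intervals; $P_{\mathrm e}$, all $T_j$-edges; and $P_{\mathrm{ne}}$, all $T_j$-non-edges — each of these countably infinite (e.g.\ $P_{\mathrm e}$ because $K_2$ occurs as $T_j$ infinitely often, $P_{\mathrm{ne}}$ because the $3$-vertex path does). Giving $\sigma$ now amounts to partitioning $\omega=N_{\mathrm{ext}}\sqcup N_{\mathrm e}\sqcup N_{\mathrm{ne}}$ into infinite sets and fixing bijections onto $P_{\mathrm{ext}},P_{\mathrm e},P_{\mathrm{ne}}$; write $q_e:=p_{\sigma^{-1}(e)}$ for the probability finally attached to $e$.

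Since $\mathbf p$ is not isolated from $0$, pick a subsequence with $p_{n_l}<2^{-l}$, set $N_{\mathrm{ext}}=\{n_l:l\text{ odd}\}$ and $M=\omega\setminus N_{\mathrm{ext}}$, and attach $N_{\mathrm{ext}}$ bijectively to $P_{\mathrm{ext}}$. Then $\sum_{n\in N_{\mathrm{ext}}}p_n<\infty$, so by the first Borel--Cantelli lemma $G$ almost surely has only finitely many external edges; removing them leaves a graph all of whose components lie inside a single $I_j$ and are thus finite, so $G$ itself — a finite edge-modification of this — has all components finite. This gives the first requirement. Moreover $\{n_l:l\text{ even}\}\subseteq M$, so $M$ still contains a subsequence tending to $0$, and since $\sum_{n\in\omega}p_n^k=\infty$ for all $k$ (Borel--Cantelli) while $\sum_{n\in N_{\mathrm{ext}}}p_n<\infty$, also $\sum_{n\in M}p_n^k=\infty$ for every $k\ge1$.

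It remains to split $M=N_{\mathrm e}\sqcup N_{\mathrm{ne}}$ and choose the last two bijections so that, for every finite connected $F$,
\[
\sum_{j:\,T_j\cong F}\;\Bigl(\prod_{\substack{e\in[I_j]^2\\ e\text{ a }T_j\text{-edge}}}q_e\Bigr)\Bigl(\prod_{\substack{e\in[I_j]^2\\ e\text{ a }T_j\text{-non-edge}}}(1-q_e)\Bigr)\;=\;\infty .
\]
Granting this, let $A_j$ be the event that in $G$ every $T_j$-edge of $[I_j]^2$ is present and every $T_j$-non-edge is absent. The $A_j$ depend on pairwise disjoint sets of coordinates, hence are independent, and $P(A_j)$ is the corresponding summand above; so by the second Borel--Cantelli lemma, for each $F$ almost surely infinitely many $j$ with $T_j\cong F$ have $A_j$ occur. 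Discarding the finitely many intervals that meet an external edge, infinitely many such $I_j$ carry no edge leaving $I_j$ and have $G\restriction_{I_j}\cong F$ connected, hence are single connected components of $G$ isomorphic to $F$. Intersecting over the countably many $F$ gives the second requirement almost surely, and by Proposition~\ref{r:GIGunique} the proof is complete modulo the displayed divergence.

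To secure the divergence, set $S=\{n\in M:p_n\le\tfrac12\}$, which is infinite by the second paragraph. If $\{n\in M:p_n>\tfrac12\}$ is also infinite, take $N_{\mathrm e}=\{n\in M:p_n>\tfrac12\}$ and $N_{\mathrm{ne}}=S$ with arbitrary bijections: every factor above is then $\ge\tfrac12$, so each summand is $\ge(1/2)^{\binom{|V(F)|}{2}}$ and the series diverges because each type recurs infinitely often. If instead $M\setminus S$ is finite, at most finitely many edge slots can get a probability exceeding $\tfrac12$, and one appeals to the Borel--Cantelli hypothesis: with the dyadic levels $C_i=\{n\in S:p_n\in(2^{-i-1},2^{-i}]\}$ one has $\sum_i|C_i|2^{-ie}\ge\sum_{n\in S}p_n^e=\infty$ for each $e\ge1$ (using $\sum_{n\in M}p_n^e=\infty$ and the finiteness of $M\setminus S$), so devoting a fixed positive fraction of each $C_i$ to each exponent and cutting into $e$-element blocks yields, for every $e\ge1$, infinitely many pairwise disjoint $e$-subsets of $S$ whose products sum to $\infty$; since a divergent series of positive terms splits into countably many divergent subseries, these blocks can be parcelled out among the (countably many) finite connected graphs with $e$ edges, one block to each interval of the matching type, with the non-edge slots filled from the remainder of $S$ (all factors $\ge\tfrac12$) and the finitely many indices outside $S$ absorbed harmlessly onto finitely many intervals. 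I expect the main obstacle to be precisely this bookkeeping in the second case — arranging the dyadic allocation so that it feeds a divergent edge-product sum to each of infinitely many graphs with a prescribed edge count, uses pairwise disjoint coordinates across all of them, and still leaves $\sigma$ a genuine bijection of $\omega$ with $[\omega]^2$; the underlying estimates are elementary and of the kind packaged by Lemma~\ref{LemM}, but they need to be combined with care.
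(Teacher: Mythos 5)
Your proposal is correct and follows essentially the same route as the paper's proof: partition $[\omega]^2$ into blocks reserved for copies of each finite connected graph (listed with infinite repetition), put a summable subsequence of $\mathbf p$ (available since $\mathbf p$ is not isolated from $0$) on the cross-block pairs so that the first Borel--Cantelli lemma kills all but finitely many external edges, use the second Borel--Cantelli lemma with divergent sums of edge/non-edge products to realize each finite connected graph infinitely often as an isolated component, and conclude via Proposition~\ref{r:GIGunique}. The only substantive difference is that where the paper simply invokes Lemma~\ref{LemM} to obtain the required divergent allocation of probabilities to the block slots, you try to re-derive it by hand: your Case~1 is fine, and the Case~2 dyadic bookkeeping that you yourself flag as the remaining obstacle is exactly what Lemma~\ref{LemM} packages (note that extra factors only decrease each summand, so its $k$ edge-slots and $k$ non-edge slots cover blocks needing $e$ edges and $\binom{v}{2}-e$ non-edges), so you can close that acknowledged gap simply by citing it, as the paper does.
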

\begin{proof}
    Fix a Borel--Cantelli sequence $(p_n)_{n\in \omega}$ that is not isolated from $0$. Let $(G_i)_{i\in \omega}$ be an enumeration of all finite connected graphs, listing each graph infinitely often. We fix families
    $(\mathcal{A}_i)_{i\in \omega},$
    of the form
    
    $$\mathcal{A}_i=\{A^i_j \s \omega:j\in\omega\},$$ 
    where
    $A_j^i$ are pairwise disjoint and the cardinalities are assigned as follows:
    
    \(\#(A^0_j)=2\),
    \(\#(A^{i+1}_j)=\#(V(G_i))\) for each \(i, j \in \omega\). 
    
    Intuitively, \(\mathcal{A}_{i+1}\) is the family of sets on which we try to obtain the copy of $G_i$. 
    
    We choose a subset $I\subset \omega$ such that $\sum\limits_{n\in I} p_{n}<\infty$, and moreover the remaining sequence $(p_n)_{n\in \omega \setminus I}$ is still Borel--Cantelli. We  assign the probabilities $(p_{n})_{n\in I}$
    to all pairs of natural numbers not is the sets of the form $[A^i_j]^2$. This way we ensure that -- up to a finite error -- the canonical copies of graphs $G_i$ will be pairwise disjoint. By the first Borel--Cantelli lemma, this assignment guarantees that there will be only finitely many connections between various $A^i_j$'s, and therefore almost surely all of the connected components will be finite. 
    
    For each \(i,j\), we fix an isomorphic copy of $G_i$ on the vertex set \(A^{i+1}_j\), which we denote by \(G^i_j\). By Lemma \ref{LemM}, we can assign the remaining probabilities to the sets of the form
    $[A^{i+1}_j]^2$,
    so that for each $i$ we have $$\sum\limits_j \mu_{\textbf{p}} (G\upharpoonright_{A^{i+1}_j}=G^i_j)=\infty,$$ where $G$ is the random graph. 
    
    Consequently, with probability $1$ the graph $G$ contains infinitely many isolated copies of each $G_i$. Finally, assign any remaining probabilities to the pairs of within the sets
    $[A^0_j]^2.$ 
    
    Since $G$ contains only finitely many edges connecting different $A^i_j$'s, its connected components are finite and each $G_i$ appears as isolated component. The result now follows from Proposition \ref{r:GIGunique}. \end{proof}

The following consequence of Theorem \ref{tail-sets} shows that the measure $\prod\limits_{e \in [\omega]^2}p_{\sigma(e)}$ cannot be concentrated on an arbitrary class of graphs.

\begin{prop}\label{Prop stopnie wieksze niz zero ale skonczone}

    Let $\textbf{p}=(p_{n})_{n\in\omega}\in(0,1)^\omega$ be a sequence of probabilities.
    There is no assignment $\sigma:\omega \rightarrow [\omega]^2$ such that
    the product measure $\prod\limits_{e \in [\omega]^2}p_{\sigma(e)}$ is concentrated on the set of locally finite graphs without isolated vertices.
    \end{prop}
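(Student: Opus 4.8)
The plan is to argue by contradiction, using Theorem~\ref{tail-sets} to pass to a tail-set on which we can perform a finite modification that destroys the property in question. Suppose toward a contradiction that some assignment $\sigma:\omega\to[\omega]^2$ makes $\mu := \prod_{e\in[\omega]^2}p_{\sigma(e)}$ concentrated on the set $\mathcal{L}$ of locally finite graphs without isolated vertices; that is, $\mu(\mathcal{L})=1$. Identifying $2^{[\omega]^2}$ with $2^\omega$ via $\sigma$, Theorem~\ref{tail-sets} gives a tail-set $\mathcal{E}\subseteq\mathcal{L}$ with $\mu(\mathcal{E})=1$; in particular $\mathcal{E}$ is invariant under finite modifications. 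Since $\mu(\mathcal{E})=1>0$, the set $\mathcal{E}$ is nonempty, so we may fix a graph $G_0\in\mathcal{E}$ which is locally finite and has no isolated vertices.

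Now I would produce a finite modification of $G_0$ that is \emph{not} locally finite, or alternatively one that \emph{has} an isolated vertex — either contradicts $\mathcal{E}\subseteq\mathcal{L}$. The cleanest route is the isolated vertex: pick any vertex $v\in\omega$, and let $G_1$ be obtained from $G_0$ by deleting all edges incident to $v$. Since $G_0$ is locally finite, $v$ has finite degree in $G_0$, so only finitely many edges are removed; hence $G_1$ is a finite modification of $G_0$, and therefore $G_1\in\mathcal{E}\subseteq\mathcal{L}$. But $v$ is isolated in $G_1$, so $G_1\notin\mathcal{L}$, a contradiction. This completes the proof.

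The only point requiring care is the passage through Theorem~\ref{tail-sets}: that theorem is stated for measures $\mu_{\mathbf p}$ on $2^\omega$ indexed by $\omega$, whereas here the measure lives on $2^{[\omega]^2}$. This is harmless because $\sigma$ is a bijection $\omega\to[\omega]^2$ and $\mu = \mu_{\sigma(\mathbf p)}$ is exactly the push-forward (equivalently, pull-back) of $\mu_{\mathbf p}$ under the induced homeomorphism $2^{[\omega]^2}\cong 2^\omega$; a tail-set in $2^\omega$ corresponds under this identification to a set of graphs invariant under finite modifications, which is all we use. I do not expect any genuine obstacle here — the local finiteness hypothesis is precisely what guarantees that ``deleting the star at $v$'' is a finite modification, and the definition of ``tail-set / invariant under finite modifications'' then does the rest. (Note that the hypothesis $G_0$ has no isolated vertices is what makes the deletion actually change membership in $\mathcal{L}$; without it one could instead add a single edge at $v$ to raise its degree and then, iterating, contradict local finiteness, but the one-step argument above is shorter.)
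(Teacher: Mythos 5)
Your proof is correct and is essentially the paper's own argument: invoke Theorem~\ref{tail-sets} to find a tail-set of full measure inside the class of locally finite graphs without isolated vertices, then delete the (finite, by local finiteness) star at one vertex to produce a finite modification with an isolated vertex, a contradiction. Your remark about transporting the tail-set notion across the bijection $\sigma:\omega\to[\omega]^2$ is a harmless technicality that the paper leaves implicit.
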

    
\begin{proof} 
    Let $\mathcal E \s 2^{[\omega]^2}$ be the set of all graphs on $\omega$ that are locally finite, but have no vertices of degree $0$. By Theorem \ref{tail-sets}, if $\mu_\mathbf{p}$ were a product measure concentrated on $\mc{E}$, there would exist a tail-set $\mc{E}'\s \mc{E}$ of full measure. But given any graph $G_0 \in \mc{E'}$, we can remove finitely many edges of $G_0$ and obtain a graph $G_1\in\mc{E'}$ with an isolated vertex. Therefore such a measure cannot exist.
\end{proof}
\begin{rem}
    The above result may be also proven directly. Indeed,     note that the vertex indexed by $n$ has a finite degree precisely when
    $$\sum\limits_{m} p_{n,m} < \infty.$$
    On the other hand, the vertex indexted by $n$ has degree at least $1$ precisely when
    $$\prod\limits_{m} (1-p_{n,m}) =0,$$
    which, by a classical result of Sierpi\'nski, is
    equivalent to 
    $$\sum\limits_{m} p_{n,m}=\infty.$$

\end{rem}

\begin{cor}
    If $G$ is a locally finite graph without isolated points, then $G$ is not weakly drawable.
\end{cor}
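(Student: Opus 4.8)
The plan is to deduce this corollary directly from Proposition \ref{Prop stopnie wieksze niz zero ale skonczone} together with the definition of weak drawability. The key observation is that ``$G$ is weakly drawable'' means precisely that $\iso(G)$ has $\mu_{\mathbf{p}}$-measure one for some $\mathbf{p}\in(0,1)^{[\omega]^2}$, which after fixing a bijection $\sigma\colon\omega\to[\omega]^2$ is a product measure of the form $\prod_{e\in[\omega]^2}p_{\sigma(e)}$ with the underlying sequence lying in $(0,1)^\omega$. So it suffices to show that if $G$ is locally finite without isolated points, then $\iso(G)$ is contained in the set $\mathcal{E}$ of all locally finite graphs on $\omega$ without vertices of degree $0$; then a measure concentrated on $\iso(G)$ would be concentrated on $\mathcal{E}$, contradicting Proposition \ref{Prop stopnie wieksze niz zero ale skonczone}.

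The containment $\iso(G)\subseteq\mathcal{E}$ is immediate from the fact that local finiteness and ``having no isolated vertex'' are isomorphism-invariant properties of graphs: any $E\in\iso(G)$ is, by definition, such that $(\omega,E)$ is isomorphic to $G$, and an isomorphism carries a vertex of degree $d$ to a vertex of degree $d$, so $(\omega,E)$ is locally finite and has no isolated vertex, i.e. $E\in\mathcal{E}$. First I would state this containment, then combine it with Proposition \ref{Prop stopnie wieksze niz zero ale skonczone} to conclude. I should be mildly careful to note that the proposition is stated for measures $\prod_{e\in[\omega]^2}p_{\sigma(e)}$ arising from a sequence $\mathbf{p}\in(0,1)^\omega$, but this is exactly the generality in which weak drawability is defined (families $\mathbf{p}$ indexed by $[\omega]^2$ are identified with sequences indexed by $\omega$ via a bijection $\sigma$, as noted in the preliminaries), so there is no gap.

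There is essentially no obstacle here: the corollary is a one-line consequence once the isomorphism-invariance of the two properties is noted. If one wanted a self-contained argument instead, one could alternatively invoke the remark following the proposition: writing $\mathbf{p}=(p_{n,m})$ indexed so that $p_{n,m}$ is the probability of the edge $\{n,m\}$, the vertex $n$ has finite degree iff $\sum_m p_{n,m}<\infty$ and has positive degree iff $\sum_m p_{n,m}=\infty$ (by Sierpi\'nski's theorem on $\prod_m(1-p_{n,m})=0$), and these two conditions are mutually exclusive, so for \emph{every} $n$ the event ``$\deg(n)$ is finite and positive'' has probability either $0$ or is incompatible with one of the two defining conditions; summing up, almost surely some vertex is either isolated or of infinite degree. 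But the cleanest route is the reduction to Proposition \ref{Prop stopnie wieksze niz zero ale skonczone}, so that is what I would write.

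\begin{proof}
Suppose, towards a contradiction, that $G$ is a locally finite graph without isolated vertices which is weakly drawable. Then $\iso(G)$ has measure one with respect to some product measure $\mu_{\mathbf{p}}$, $\mathbf{p}\in(0,1)^{[\omega]^2}$; fixing a bijection $\sigma\colon\omega\to[\omega]^2$, this is a measure of the form $\prod_{e\in[\omega]^2}p_{\sigma(e)}$ with $(p_n)_{n\in\omega}\in(0,1)^\omega$. Since local finiteness and the absence of isolated vertices are preserved by graph isomorphisms, every $E\in\iso(G)$ satisfies $E\in\mathcal{E}$, where $\mathcal{E}$ is the set of all locally finite graphs on $\omega$ without vertices of degree $0$. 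Hence $\mu_{\mathbf{p}}$ is concentrated on $\mathcal{E}$, contradicting Proposition \ref{Prop stopnie wieksze niz zero ale skonczone}.
\end{proof}
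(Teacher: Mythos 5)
Your proof is correct and is essentially identical to the paper's argument: both reduce the statement to Proposition \ref{Prop stopnie wieksze niz zero ale skonczone} by observing that $\iso(G)$ is contained in the class of locally finite graphs without isolated vertices, so a product measure concentrated on $\iso(G)$ would be concentrated on that class. The only difference is that you spell out the isomorphism-invariance step and the indexing of $\mathbf{p}$, which the paper leaves implicit.
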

\begin{proof}
    If $G$ were weakly drawable, there would exist a product measure $\mu_\mathbf{p}$ concentrated on $\iso{G}$, hence concentrated on the class of all locally finite graphs without isolated points, contradicting Proposition \ref{Prop stopnie wieksze niz zero ale skonczone}.
\end{proof}

\begin{exa}\label{pojedyncza nieskonczona sciezka nie jest losowalna}
An example of a locally finite graph without isolated points is the graph for which the only edges are $\{n, n+1\}$ for $n\in\IZ$ (i.e. the \emph{bi-infinite path}). 
\end{exa}

The natural examples of product measures $\mu_\textbf{p}$ without $\textbf{p}$-random graphs are the ones that generate specific non-isomorphic graphs with positive probability. The next proposition shows a certain strengthening of this property\footnote{We are grateful to Adam Barto\v{s} for posing the question resolved by this proposition.}.

\begin{prop}
There exists a probability measure
$$\mu_\mathbf{p}=\prod\limits_{e \in [\omega]^2}p_e,$$
 such that every set of the form $\iso(G)$ measure zero with respect to $\mu_\mathbf{p}$. 
\end{prop}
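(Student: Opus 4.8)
The plan is to construct $\mathbf{p}$ so that the random graph almost surely has a property that pins down, for \emph{every} potential isomorphism type $G$, a reason why $\iso(G)$ cannot have full — indeed positive — probability. The cleanest route is to make the degree sequence of the random graph random in a way that cannot be matched by any fixed countable graph. Concretely, partition $[\omega]^2$ into blocks and arrange the probabilities so that, for each vertex $n$, whether $n$ has finite or infinite degree is itself a genuinely random event (each with probability strictly between $0$ and $1$), and moreover these events are mutually independent across $n$. Recall from the Remark above that $\deg(n)<\infty$ almost surely fails exactly when $\sum_m p_{n,m}=\infty$; so I would choose, for each $n$, the probabilities $p_{n,m}$ (for $m$ ranging over some block of partners assigned to $n$) so that the event $\big\{\sum_m [\text{edge } \{n,m\}] = \infty\big\}$ — equivalently, by Borel--Cantelli-type reasoning on a carefully tuned tail — has probability $\tfrac12$, and independently of the analogous events for other vertices. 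This is arranged by dovetailing: give vertex $n$ infinitely many private partners and split them into two interleaved subsequences, one summable and one not, each used with probability determined by an auxiliary independent coin; the independence across $n$ is automatic since the blocks are disjoint coordinates of the product.

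Having fixed such a $\mu_\mathbf{p}$, let $D \subseteq \omega$ be the (random) set of vertices of infinite degree. By the construction, $\mathbbm{1}_D$ is, up to a measure isomorphism, a sequence of independent fair coins, so for \emph{every} fixed $S \subseteq \omega$ we have $\mu_\mathbf{p}(D = S) = 0$, and in fact $\mu_\mathbf{p}(\{G' : \{n : \deg_{G'}(n) = \infty\}\text{ equals any prescribed set}\}) = 0$. Now suppose toward a contradiction that $\mu_\mathbf{p}(\iso(G)) > 0$ for some countable graph $G$. Since $\iso(G) \subseteq 2^{[\omega]^2}$, any $G' \in \iso(G)$ is isomorphic to $G$; in particular the number of vertices of infinite degree, the number of vertices of each finite degree $d$, and generally the whole degree multiset of $G'$, are invariants equal to those of $G$. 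So $\iso(G)$ is contained in the set of graphs whose degree multiset equals that of $G$. It therefore suffices to show that \emph{every} such "fixed degree multiset" set is $\mu_\mathbf{p}$-null. This follows from the randomness of $D$: fix $G$ and let $\kappa \le \omega$ be the number of infinite-degree vertices of $G$. If $\kappa$ is finite, then $\mu_\mathbf{p}(\#D = \kappa)=0$ because $\#D$ is a.s. infinite (indeed each $n\in D$ independently with probability $\tfrac12$, so by the second Borel--Cantelli lemma $D$ is a.s. infinite); if $\kappa = \omega$, one still needs to rule out a match, which is where the finite-degree statistics come in — see the next paragraph.

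The remaining case ($G$ has infinitely many infinite-degree vertices and infinitely many finite-degree vertices, or all degrees infinite) is the main obstacle, since matching "infinitely many of each" is not obstructed by $\#D$ alone. I would handle it by building \emph{two} layers of randomness into $\mathbf{p}$. On top of the infinite/finite dichotomy, arrange that conditioned on $n \notin D$ (finite degree), the exact value $\deg(n)$ is random with $\mu_\mathbf{p}(\deg(n) = d \mid n \notin D) \in (0,1)$ for, say, $d = 0$ and $d = 1$, again independently across $n$; this is a routine tuning of the summable part of $n$'s partner-block. Then the random variable counting vertices of degree exactly $0$ is, conditionally, a sum of independent Bernoullis with divergent parameter sum, hence a.s. infinite, and — crucially — not almost surely equal to any fixed value, and similarly for degree exactly $1$; more to the point, the \emph{pair} $(\#\{n:\deg n = 0\}, \#\{n:\deg n=1\})$ cannot almost surely equal a fixed pair of cardinals, because by independence one can perturb. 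Actually the slick finish is: the set $D$ of infinite-degree vertices is a.s. a "generic" subset in the sense that for every $S\subseteq\omega$, $\mu_\mathbf{p}(D = S)=0$, and the isomorphism class $\iso(G)$ only constrains $\#D$, not $D$ itself — BUT $\iso(G)$ does fix $\#D$, so when $\#G$'s infinite-degree count is finite we are done, and when it is infinite we instead invoke the independent finite-degree-value layer: the function $n \mapsto \min(\deg(n), 2)$ restricted to $\omega \setminus D$ is a.s. a generic element of $\{0,1,2\}^{\omega\setminus D}$ after a measure-preserving relabeling, so in particular $\{n : \deg(n) = 0\} \ne \emptyset$ a.s. and $\{n:\deg(n)=1\}\ne\emptyset$ a.s., yet also no prescribed partition of $\omega\setminus D$ into degree-$0$, degree-$1$, degree-$\ge 2$ vertices has positive probability; since $\iso(G)$ forces that partition up to permutation only, I instead argue that $\iso(G) \subseteq \{G' : \#\{n:\deg n = 0\} = a\}$ for the fixed $a \le \omega$ coming from $G$, and show $\mu_\mathbf{p}(\#\{n:\deg n=0\}=a)=0$ for every $a$: if $a$ finite this is the second Borel--Cantelli lemma (a.s. infinitely many isolated vertices), and if $a=\omega$ we use that conditioned on the infinite-degree set, $\#\{n:\deg n = 0\}$ and $\#\{n : \deg n = 1\}$ are \emph{both} a.s. infinite while their "difference in structure" still refuses to concentrate — the honest way to close this is to note that $\iso(G)$ also fixes $\#\{n : \deg n = 1\}$, and the key combinatorial fact is that one cannot have both $\#\{n:\deg n=0\}$ and $\#\{n:\deg n=1\}$ almost surely infinite \emph{and} almost surely equal to a fixed graph's values unless those values are forced, which they are not, because swapping the roles of the two independent coins governing degree $0$ versus degree $1$ at two vertices produces a non-isomorphic graph with equal probability. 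I expect the bookkeeping in this last step — choosing which finitely many degree-values to randomize and verifying the independence and the "no concentration" claim — to be the technical heart; everything else is an application of the two Borel--Cantelli lemmas together with Theorem~\ref{tail-sets} to reduce to tail events.
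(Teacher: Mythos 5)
There is a fatal obstruction at the very first step of your construction: for a product measure $\mu_{\mathbf p}$ on $2^{[\omega]^2}$ the event $\{\deg(n)=\infty\}$, for a fixed vertex $n$, depends only on the coordinates $\{n,m\}$, $m\in\omega$, and is unchanged by altering finitely many of them; by Kolmogorov's zero--one law it therefore has probability $0$ or $1$ (concretely, it is a.s.\ determined by whether $\sum_m p_{n,m}$ diverges, as in the Remark after Proposition \ref{Prop stopnie wieksze niz zero ale skonczone}). There is no ``auxiliary independent coin'' to gate which subsequence of partners is ``used'': the measure is a fixed product over edges, so you cannot make the infinite/finite-degree dichotomy a fair coin, and the conditioning ``given $n\notin D$'' in your second layer is vacuous. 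Beyond this, even if some degree statistics were randomized, the invariants you extract (cardinality of $D$, number of vertices of degree $0$ or $1$, the degree multiset) are too coarse: in exactly the problematic regime they are a.s.\ equal to $\omega$, so a graph $G$ having infinitely many vertices of each relevant degree matches them almost surely, and the ``swap the two coins at two vertices'' remark does not produce the needed conclusion --- exhibiting an equiprobable non-isomorphic alternative does not make any single isomorphism class null. What the statement requires is an isomorphism \emph{invariant} of the random graph taking continuum many values, each of measure zero.

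That is precisely how the paper's proof proceeds, and it is worth comparing: the authors fix finite trees $V_n$ of strictly increasing size, with no embedding of $V_n$ into $V_{n+1}$, put a designated copy $B_n$ of $V_n$ (and $B_n'$ of $V_n$ plus one extra edge) on a block $A_n$ of vertices, and tune $\mathbf p$ so that (i) edges between blocks are a.s.\ finite in number, (ii) all but finitely many blocks realize $B_n$ or $B_n'$, and (iii) conditionally each of the two options has probability $\tfrac12$. The choice of the trees guarantees that the isomorphism type of the whole (a.s.\ locally structured) graph determines the $0$--$1$ sequence $\theta$ of choices up to finitely many coordinates; each tail-equivalence class of $\theta$ is countable and each fixed $\theta$ has probability $\prod_{n>k}\tfrac12=0$, so every $\iso(G)$ is null. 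Note that the coin there is a single prescribed edge inside a finite block --- a genuinely non-tail event --- which is exactly the kind of randomization a product measure can support, in contrast to the infinite-degree events your proposal relies on.
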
    
    \begin{proof}
    Let $V_n$ be a sequence of finite trees (acyclic connected graphs) satisfying:
    \begin{enumerate}
        \item $\#(V_{n+1})>\sum\limits_{i\leqslant n} \# (V_i)$, for every $n \ge 1$,
        \item each $V_n$ has at least two vertices of degree $1$,
        \item there is no embedding $i\colon V_n \to V_{n+1}$, for any $n\in \omega$.
    \end{enumerate}
     Such a family may be easily obtained by increasing the minimal length of the path connecting two splitting vertices.
    \begin{center}
\begin{minipage}{0.45\textwidth}
\centering
\begin{tikzpicture}[scale=0.7]
  \node[circle, fill=darkgreen, inner sep=1.7pt] (v1) at (0,2) {};
  \node[circle, fill=darkgreen, inner sep=1.7pt] (v2) at (-1,1) {};
   \node[circle, fill=darkgreen, inner sep=1.7pt] (v3) at (0,1) {};
  \node[circle, fill=darkgreen, inner sep=1.7pt] (v4) at (1,1) {};
  \node[circle, fill=darkgreen, inner sep=1.7pt,] (v5) at (0,0) {};
  \node[circle, fill=darkgreen, inner sep=1.7pt] (v6) at (-1,-1) {};
  \node[circle, fill=darkgreen, inner sep=1.7pt] (v7) at (1,-1) {};
  
  \draw[line width=1.5pt, darkgreen] (v1) -- (v2);
  \draw[line width=1.5pt, darkgreen] (v1) -- (v3);
  \draw[line width=1.5pt, darkgreen] (v1) -- (v4);
  \draw[line width=1.5pt, darkgreen] (v3) -- (v5);
  \draw[line width=1.5pt, darkgreen] (v5) -- (v6);
\draw[line width=1.5pt, darkgreen]        (v5) -- (v7);

  \node[align=left, darkgreen] at (-0.9,0.2) {$V_1$};
\end{tikzpicture}
\end{minipage}
\hspace{-56pt}
\begin{minipage}{0.45\textwidth}
\centering
\begin{tikzpicture}[scale=0.7]

   \node[circle, fill=darkgreen, inner sep=1.7pt] (v1) at (0,2) {};
  \node[circle, fill=darkgreen, inner sep=1.7pt] (v2) at (-1,1) {};
   \node[circle, fill=darkgreen, inner sep=1.7pt] (v3) at (0,1) {};
  \node[circle, fill=darkgreen, inner sep=1.7pt] (v4) at (1,1) {};
  \node[circle, fill=darkgreen, inner sep=1.7pt,] (v5) at (0,0) {};
   \node[circle, fill=darkgreen, inner sep=1.7pt,] (v6) at (0,-1) {};
  \node[circle, fill=darkgreen, inner sep=1.7pt] (v7) at (-1,-2) {};
  \node[circle, fill=darkgreen, inner sep=1.7pt] (v8) at (1,-2) {};
  
  \draw[line width=1.5pt, darkgreen] (v1) -- (v2);
  \draw[line width=1.5pt, darkgreen] (v1) -- (v3);
  \draw[line width=1.5pt, darkgreen] (v1) -- (v4);
  \draw[line width=1.5pt, darkgreen] (v3) -- (v5);
  \draw[line width=1.5pt, darkgreen] (v5) -- (v6);
  \draw[line width=1.5pt, darkgreen] (v6) -- (v7);
  \draw[line width=1.5pt, darkgreen] (v6) -- (v8);
  \node[align=left, darkgreen] at (-0.9,-0.2) {$V_2$};
\end{tikzpicture}
\end{minipage}
\end{center}

    We fix a partition of $\omega$ into pairwise disjoint sets $A_n$, $n<\omega$, in such a way that that $\#(A_n)=\#(V_n)$. Let $V'_n$ be a graph obtained from $V_n$ by adding one extra edge between its vertices. Let $B_n$ be a copy of $V_n$ with $V(B_n)=A_n$, and let $B'_n$ be corresponding copy of $V'_n$ with $V(B'_n)=A_n$. Let $\textbf{p}=\{p_e : e \in [\omega]^2\}$ be a family of probabilities, chosen subject to the following conditions:
    \begin{itemize}
        \item $\sum p_{k,m} < \infty$, where the summation runs over all pairs $k,m$ that are members of different sets $A_n$. 
        \item $\sum\limits_n (1-\mu_{\textbf{p}}(G\upharpoonright_{A_n} \in \{B_n,B'_n\}))<\infty$
        \item for every $n\in \omega$ we have
        
        $$\frac{\mu_{\textbf{p}}(G\upharpoonright_{A_n} =B_n\})}{\mu_{\textbf{p}}(G\upharpoonright_{A_n} \in \{B_n,B'_n\})}=\frac{\mu_{\textbf{p}}(G\upharpoonright_{A_n} =B'_n\})}{\mu_{\textbf{p}}(G\upharpoonright_{A_n} \in \{B_n,B'_n\})}=\frac12.$$
        
    \end{itemize}
    Notice that -- with probability one -- only finitely many graphs $G\upharpoonright_{A_n}$ are different from $B_n$ and $B'_n$, moreover there are only finitely many edges between different $A_n$'s.

    Now consider the following definition, based on a given  sequence $\theta \in 2^\omega$, and an integer $k$. We will say that a graph $H$ has \emph{type $(\theta,k)$} if it is of the form
    $$H=C+\sum\limits_{n> k}C_n,$$
    where
    \begin{enumerate}
        \item $\#C=\sum\limits_{n\leq k}\#(V_n)$,
        \item For all $n>k$ 
\[
C_n= 
\begin{cases}
V_n & \text{if } \theta(n)=0 \\
V'_n & \text{if } \theta(n)=1.
\end{cases}
\]
    \end{enumerate}

   It is easy to verify that two graphs of types $G(\theta^1,k_1)$, $G(\theta^2,k_2)$ are not isomorphic, provided that $\theta^1_n\neq\theta^2_n$ for at least one $n>\max\{k_1,k_2\}$. Indeed, since connected components of graph $H$ are either subgraphs of $C$, or $C_n$ for $n>k$, assertion follows by assumption about $\#V_n$.

    We already noted that with probability one graph resulting from our draw is of the type $G(\theta, k)$ for some $\theta$ and $k$. But if we fix $\theta$ and $k$, then probability of getting a graph of given type is at most than $\prod\limits_{n>k} \frac12=0$, which concludes our claim.
    \end{proof}
\begin{rem}
    Above construction may be done by permuting any $\textbf{p}$ for which there exists partition of $\omega$ into three infinite sets $I,J,K$ such that such that $\sum\limits_{n\in I}p_{n}<\infty$, $\sum\limits_{n\in J} \left( 1- p_{n}\right)<\infty$ and $(p_{n})_{n\in K}$ is isolated from both $0$ and $1$. In fact, slight modification of the above construction, which adds infinitely many isolated components of planned size at most two, shows it same result holds if $\textbf{p}$ for which $\omega$ only contains such three infinite, pairwise disjoint subsets, not necessary being covered by them.
\end{rem}

\subsection{Properties of $\textbf{p}$-random graphs
}

The following example demonstrates that — unlike the Rado graph — a strongly \(\PP\)-drawable graph, (where $\PP=\on{BC_{M0}}$) need not be invariant under the removal of a single vertex.

\begin{exa}\label{ex:SusGIG} 
Fix a sequence $\textbf{p}=(p_n)_{n\in \omega}\in \on{BC_{M0}}$.  We shall construct a copy of $\UU_{\fin}$ on the vertex set \(\omega\), with one additional vertex denoted \(\pi\).  Let \(G_n,\ \mathcal A_n,\ A^m_n,\ G^m_n\) be as in the proof of Proposition \ref{prop:GIG}, and for each \(n,m\) let \(\phi^m_n\colon G_n\to G^m_n\) be a graph isomorphism.  For every \(n\) let \((C^k_n)_{k<2^{\#V(G_n)}}\) be a partition of \(\omega\) into $2^{\#V(G_n)}$ infinite sets.  

There are at most \(2^{\#V(G_n)}\) ways to adjoin the extra vertex \(\pi\) to \(G_n\); denote these connection patterns by $w_1,\dots, w_{2^{\#V(G_n)}}$ (possibly with repetitions\footnote{There might be less then $2^{\#V(G_n)}$ ways of connecting the extra vertex. e.g. if $G_n$ is a clique there is exactly $\#G_n$ of them, but we do not need the exact number.}). We will proceed similarly to the proof of Proposition \ref{prop:GIG}, but first we split \(\omega\) into three infinite disjoint sets \(I,J,K\) so that the subsequences \((p_n)_{n\in I}\) and \((p_n)_{n\in J}\) remain in $\on{BC_{M0}}$. We then assign the probabilities to edges as follows:

\begin{itemize}
    \item Edges between distinct sets \(A_n^m\) for \(n,m\geqslant 0\), as well as edges within each \(A_n^m\) for \(n\geqslant 0\), \(m\geqslant 1\), are assigned probabilities from \(\{p_i\}_{i\in I}\) in order to achieve the same objectives as in Proposition~\ref{prop:GIG};
    
    \item 
Edges between the extra vertex~$\pi$ and elements of~$\omega$ are assigned probabilities from $\{p_i\}_{i\in J}$ in such a way that

\[
    \sum_{k\in\ \!\bigcup\limits_m A^0_m} \mu_{\mathbf{p}}\left( \pi \text{ is connected to } k \right) < \infty\footnote{In fact, this condition may be omitted without affecting the resulting graph, but including it makes the proof slightly easier.}.
    \]

    Moreover, we demand the following condition. 
    
    \item For every \(n\geqslant 1\) and every \(k\in\{1,\ldots,2^{\#(G_n)}\}\), we have

\[
    \sum_{m\in C_n^k} \mu_{\mathbf{p}}\left( G\upharpoonright A_n^m = G_n^m \text{ and } G\upharpoonright A_n^m \text{ is connected to } \pi \text{ according to } w_k \text{ and } \phi_n^m \right)
    \]

\[
    = \sum_{m\in C_n^k} \mu_{\mathbf{p}}\left( G\upharpoonright A_n^m = G_n^m \right) \cdot \mu_{\mathbf{p}}\left( G\upharpoonright A_n^m \text{ is connected to } \pi \text{ according to } w_k \text{ and } \phi_n^m \right) = \infty;
    \]

    \item Probabilities from \(\{p_i\}_{i\in K}\), together with any unused elements from \(\{p_i\}_{i\in I}\) and \(\{p_i\}_{i\in J}\), are assigned to edges within the sets \(A^0_m\).
\end{itemize}
        
Just as in Proposition~\ref{prop:GIG}, we observe that with probability one,  \(G\upharpoonright_\omega\) is isomorphic to \(\UU_{\fin}\), due to the first assumption. By Proposition~\ref{P:GIG}, we may think of it as a graph containing infinitely many isolated copies of each finite graph. Now we consider how the additional vertex \(\pi\) is connected with these finite graphs. The third assumption concerning the assignment of probabilities ensures that, for every finite graph \(G_n\), each possible way of connecting it to the additional vertex $\pi$ is realised infinitely often. Since any two such graphs are isomorphic, we conclude that the graph constructed in this example is strongly $\on{BC_{M0}}$-drawable.

Observe that \(G\) is disconnected, yet contains a single infinite connected component. In particular, it is not isomorphic to \(G\upharpoonright_\omega\), since the latter is isomorphic to \(\UU_{\fin}\). Thus, we have constructed a strongly $\on{BC_{M0}}$-drawable graph \(G\) and a vertex \(v\in V(G)\) such that the graph obtained by removing \(v\) from \(G\) is not isomorphic to \(G\) itself.

\end{exa} 

Another operation preserving the Rado graph is \textit{switching with respect to finite set of vertices}. Given a graph \(G = (V, E)\) and a subset \(S \subseteq V\),  we  call a new graph $G'=(V,E')$ the graph obtained by switching $G$ with respect to $S$ if:
\begin{itemize}
    \item For all \(v, w \in S\), we have \(\{v, w\} \in E \iff \{v, w\} \in E'\);
    \item For all \(v, w \in V \setminus S\), we have \(\{v, w\} \in E \iff \{v, w\} \in E'\);
    \item For all \(v \in S\) and \(w \in V \setminus S\) (or vice versa), we have \(\{v, w\} \in E' \iff \{v, w\} \notin E\).
\end{itemize}

By contrast, this property does not necessarily hold for weakly drawable graphs.  Indeed, consider the graph \(\UU_{\fin}\) from Definition~\ref{P:GIG}, and let \(S\) be one of its connected components.  Switching \(\UU_{\fin}\) with respect to \(S\) produces a connected graph, which is clearly not isomorphic to \(\UU_{\fin}\).  In particular, if \(S\) is a singleton consisting of an isolated vertex, then switching \(\UU_{\fin}\) with respect to \(S\) yields a graph in which that vertex is adjacent to every other vertex. This transformation witnesses that the resulting graph is not weakly drawable.  It is therefore natural to pose the following problem:

\begin{prob}
Characterize weakly drawable graphs that are invariant for removing finitely many vertices and/or switching with respect to the finite set of vertices.
\end{prob}

Resolving the above problem may possibly yield another characterization of the Rado graph.

There is one more nice property enjoyed by the Rado graph - the pigeonhole principle. More precisely, if set of vertices of $\mc{R}$ is split into finitely many parts $A_0,\ldots, A_n$, then at least one of $\mc{R}\restriction_{A_i}$ is isomorphic with $\mc{R}$. There is very pleasant reasoning showing that there are only three countable graphs with that property - infinite clique, infinite anticlique and $\mc{R}$ (see \cite[Propostion 4]{Cameron_the_random_graph} for details). Due to that reason pigeonhole property cannot be shared by all drawable graphs. A weakening of this property is:
\begin{df}\label{wl:szuflada} 
A graph $G$ is \emph{indivisible} if for every partition $A_0\cup A_1 =V(G)$, one of the graphs $G\restriction_{A_0}$ and $G\restriction_{A_1}$ contains a copy of $G$.
\end{df}
We will prove in Section \ref{section A basis theorem for weakly universal graphs} that \(\UU_{\fin}\) is indivisible, so one could hope to extend it to all drawable graphs. Yet it is worthy to mention that this property breaks for the graph $G$ constructed in the Example \ref{ex:SusGIG}, as that graph has unique infinite component, but letting $A_1$ to be a singleton of unique vertex outside $\UU_{\fin}$ within $G$, and $A_2$ to be the $\UU_{\fin}$ part gives us only one infinite part, which contains no infinite connected component.

\subsection{Permutations acting on a given sequence}
\begin{df}
    Let $\nu:\omega \rightarrow [\omega]^2$ be a fixed bijection. Given a sequence of probabilities $\textbf{p}=(p_n)_{n\in\omega} \in [0,1]^\omega$, we define set of those permutations of $\textbf{p}$ that generate some graph with probability $1$:

\[
A_{\mathbf{p}} = \left\{ \sigma \in S_\infty : 
\mathop{\scalebox{1.3}{$\exists$}}_{\substack{G \in 2^{[\omega]^2}}} 
G \text{ is a } \nu\circ\sigma(\mathbf{p})\text{-random graph} \right\},
\]
where $\nu\circ\sigma(\mathbf{p})=(p_{\nu\circ\sigma(n)})_{n\in\omega}$.
\end{df}

\begin{prop}\label{p:Ap=Sinfty}
The equality \(A_{\mathbf p}=S_\infty\) holds if and only if $\mathbf p\in \on{Sep}\cup\{0,1\}^\omega$ (that is, $\mathbf p$ is separated both from zero and one or contains only zeros and ones)\footnote{For most of our considerations, allowing probabilities equal to $0$ or $1$ does not fit the random graph setting; however, in this case we permit these values, since the result concerns the impossibility of generating certain graphs.}.
\end{prop}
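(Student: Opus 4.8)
We prove the two implications separately.

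\emph{The implication ``$\Leftarrow$''.} Suppose first $\mathbf p\in\on{Sep}$. For every $\sigma\in S_\infty$ the permuted sequence $\nu\circ\sigma(\mathbf p)$ has the same infimum and supremum as $\mathbf p$, hence is still separated from $0$ and from $1$, and therefore Borel--Cantelli (if $\varepsilon\le p_n\le 1-\varepsilon$ then $p_n^k\ge\varepsilon^k$ and $(1-p_n)^k\ge\varepsilon^k$, so both series diverge). By Theorem~6 of~\cite{coregliano2024random}, recalled above, the Rado graph $\RR$ is then a $\nu\circ\sigma(\mathbf p)$-random graph, so $\sigma\in A_{\mathbf p}$; thus $A_{\mathbf p}=S_\infty$. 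If instead $\mathbf p\in\{0,1\}^\omega$, then for every $\sigma$ the measure $\mu_{\nu\circ\sigma(\mathbf p)}$ is the Dirac mass at the single graph $E_0$ whose edges are exactly the pairs receiving probability $1$, so $\mu_{\nu\circ\sigma(\mathbf p)}(\iso(E_0))=1$ and again $\sigma\in A_{\mathbf p}$.

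\emph{The implication ``$\Rightarrow$'', reductions.} We argue by contraposition: assuming $\mathbf p\notin\on{Sep}\cup\{0,1\}^\omega$ we produce a permutation $\sigma\notin A_{\mathbf p}$. Complementation of graphs pushes $\mu_{\nu\circ\sigma(\mathbf p)}$ forward to $\mu_{\nu\circ\sigma(1-\mathbf p)}$ and carries each $\iso(G)$ to the isomorphism class of the complement, so $A_{\mathbf p}=A_{1-\mathbf p}$; since $\mathbf p\notin\on{Sep}$ means $\inf_n p_n=0$ or $\sup_n p_n=1$, after possibly replacing $\mathbf p$ by $1-\mathbf p$ we may assume $\inf_n p_n=0$. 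We then split on the finiteness of $\sum_n p_n$ and $\sum_n(1-p_n)$.

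\emph{Case 1: $\sum_n p_n<\infty$ or $\sum_n(1-p_n)<\infty$}, say the former. Then for \emph{every} bijection $\tau\colon\omega\to[\omega]^2$ the first Borel--Cantelli lemma gives $\mu_{\tau(\mathbf p)}$-a.e.\ graph finitely many edges. If some $G_0$ were $\tau(\mathbf p)$-random, it would have finitely many edges, say $N$; since the number of edges is an isomorphism invariant, $\mu_{\tau(\mathbf p)}(\{G:\#E(G)=N\})=1$. But choosing $N+1$ pairs carrying positive probability, the event that all are edges has positive probability and forces more than $N$ edges --- a contradiction. Hence $A_{\mathbf p}=\varnothing$. (The sentence just used can fail when only finitely many $p_n$ lie in $(0,1)$; this degenerate branch is handled by a short direct argument, choosing $\tau$ so that $\mu_{\tau(\mathbf p)}$ is supported on at least two graphs of distinct isomorphism type, distinguished e.g.\ by edge count or degree sequence.)

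\emph{Case 2: $\sum_n p_n=\infty$ and $\sum_n(1-p_n)=\infty$, with $\inf_n p_n=0$.} Here I build a permutation that forces one vertex to behave differently from all the others. Since $\inf_n p_n=0$ and $\mathbf p\notin\{0,1\}^\omega$, pick an infinite index set $P_2$ with $\sum_{n\in P_2}p_n<\infty$ and containing some $n_0$ with $p_{n_0}\in(0,1)$; set $P_1=\omega\setminus P_2$, so $\sum_{n\in P_1}p_n=\infty$. Define $\sigma$ so that the probabilities indexed by $P_2$ are assigned to the pairs $\{0,w\}$, $w\ge1$, and the probabilities indexed by $P_1$ are assigned to the pairs inside $\omega\setminus\{0\}$ in such a way that every vertex $v\ge1$ receives infinite total probability, $\sum_{w}p_{(\nu\circ\sigma)^{-1}(\{v,w\})}=\infty$. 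Under $\mu_{\nu\circ\sigma(\mathbf p)}$ the second Borel--Cantelli lemma makes every $v\ge1$ have infinite degree almost surely, while the first makes vertex $0$ have finite degree almost surely, with
\[
\mu_{\nu\circ\sigma(\mathbf p)}\big(\deg(0)=0\big)=\prod_{n\in P_2}(1-p_n)\in(0,1),
\]
the product being positive by Sierpi\'nski's theorem (recalled above) and $<1$ since $p_{n_0}\in(0,1)$. Thus the isomorphism invariant ``number of isolated vertices'' equals $1$ exactly when $\deg(0)=0$, hence is not $\mu_{\nu\circ\sigma(\mathbf p)}$-almost surely constant, and $\mu_{\nu\circ\sigma(\mathbf p)}$ cannot be concentrated on any $\iso(G_0)$. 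So $\sigma\notin A_{\mathbf p}$, completing the contraposition.

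The conceptual heart --- isolating the right invariant (number of isolated vertices) and forcing it to take the values $0$ and $1$ with positive probability --- is clean; the main obstacle I anticipate is making the bookkeeping in Case~2 precise, namely distributing the infinitely many ``leftover'' probabilities indexed by $P_1$ over $[\omega\setminus\{0\}]^2$ so that simultaneously every vertex accumulates infinite mass and the whole assignment is a genuine bijection $\omega\to[\omega]^2$ (one natural scheme: repeatedly allocate a fresh block of $P_1$-probabilities of sum $\ge1$ to new pairs through the currently ``poorest'' vertex, interleaved with an enumeration of $[\omega\setminus\{0\}]^2$). A secondary, purely technical nuisance is the degenerate branch of Case~1 in which only finitely many $p_n$ lie in $(0,1)$.
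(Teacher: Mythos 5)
Your proof is correct and follows the paper's overall skeleton --- the same ``$\Leftarrow$'' direction (Rado graph for $\on{Sep}$, Dirac mass for $\{0,1\}^\omega$), the same reduction by complementation to $\inf_n p_n=0$, and essentially the same treatment of the case $\sum_n p_n<\infty$ --- but the crucial case $\sum_n p_n=\infty$ is handled by a genuinely different, and in fact more conclusive, argument. The paper assigns the summable subsequence to the pairs \emph{avoiding} a distinguished vertex, obtains almost surely a finite modification of an infinite star, and then appeals to other permutations producing two, three, \dots\ stars; as written this does not exhibit a single permutation on which the measure fails to concentrate on one isomorphism class, since ``finite modification of an infinite star'' is itself a union of many isomorphism types. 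You instead fix one permutation under which vertex $0$ almost surely has finite degree while every other vertex almost surely has infinite degree, and note that the isomorphism invariant ``number of isolated vertices'' is then almost surely $1$ precisely when $\deg(0)=0$ and $0$ otherwise, each alternative having positive probability; this directly rules out concentration on any $\iso(G_0)$ for that one permutation. The bookkeeping you flag (distributing the indices of $P_1$ over $[\omega\setminus\{0\}]^2$ so that every vertex accumulates infinite mass while the assignment stays bijective) is routine and your interleaved block scheme does it; just make sure $P_2$ is chosen among indices with $p_n<1$, since otherwise $\prod_{n\in P_2}(1-p_n)$ could vanish despite the finite sum. One caveat you share with the paper: both arguments silently upgrade ``$\inf_n p_n=0$'' to ``there is an infinite set of indices with summable $p_n$,'' which fails if the infimum is attained only at finitely many coordinates equal to $0$ (e.g.\ $\mathbf p=(0,\tfrac12,\tfrac12,\dots)$, for which every permutation in fact yields the Rado graph); this is a boundary defect of the statement's conventions rather than of your argument specifically.
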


\begin{proof}
If \(\mathbf p\) is separated from zero and one, then for every permutation \(\sigma\in S_\infty\) \(\sigma(\mathbf p)\)-random graph is isomorphic to the Rado graph \(\mathcal R\).  Of course, if \(\mathbf p\in\{0,1\}^\omega\), then for every \(\sigma\in S_\infty\) a \(\sigma(\mathbf p)\)-random graph exists.

Assume now that \(\mathbf p\) is not separated from at least one of \(0,1\) and that \(\mathbf p\) contains at least one element in the open interval \((0,1)\).  By symmetry, we may assume that \(\mathbf p\) has a subsequence converging to \(0\).  We will find a permutation \(\sigma\in S_\infty\) for which no \(\nu\circ\sigma(\mathbf p)\)-random graph exists. 

\textbf{Case 1.} \(\sum\limits_n p_n<\infty\).

In this situation no permutation \(\sigma\in S_\infty\) produces a \(\sigma(\mathbf p)\)-random graph. Indeed, on the one hand probability that resulting graph has at least than $N \in \omega$ edges is grater or equal 
$$\prod\limits_{n <N} p_n>0.$$ 
On the other hand, since \(\sum\limits_n p_n<\infty\), the first Borel--Cantelli lemma ensures that family of graphs with finitely many edges has probability $1$.  Hence, with probability one, the resulting graph has only finitely many edges, but for every \(k \in \omega\) there is positive chance to obtain more than \(k\) edges. Thefore no $\nu\circ \sigma(\mathbf{p})$-random graph can exist.

\medskip

\textbf{Case 2.} \(\sum\limits_n p_n=\infty\).

Since \(\mathbf p\) has a subsequence converging to \(0\), we can choose an infinite set \(A\subseteq\omega\) such that \(\omega\setminus A\) is infinite, and \(\sum\limits_{n\in A}p_n<\infty\).  
We assign the probabilities $\{p_n : n \in \omega\setminus A\}$ to the pairs of the form $\{0,k\}$, where $k>0$, and the probabilities $\{p_n: n \in A\}$ to all other pairs. By the Borel--Cantelli lemmas, with probability one, the resulting graph will have only finitely many edges not containing $0$, and infinitely many edges containing $0$. Therefore, with probability one, we will obtain a finite modification of an infinite "star" centered at $0$. By a similar argument we can find a distribution that will produce (a finite modification of) two infinite stars, three infinite start, and so on. Therefore, no $\nu\circ\sigma(\mathbf{p})$- graph can exist.

\end{proof}

We conclude this subsection with a result about the effect of a typical permutation on a given sequence.

\begin{theorem}\label{t:RezWielePerm}
    Let $\nu:\omega \rightarrow [\omega]^2$ be a fixed bijection, and $\textbf{p}=(p_n)_{n\in \omega}$ be a Borel--Cantelli sequence. Then
    \[
    \left\{\sigma\in S_\infty\colon \mu_{\nu\circ \sigma(\mathbf{p})}(\iso(\RR))=1\right\}    
    \]
 is co-meagre, where $\nu\circ\sigma(\mathbf{p})=(p_{\nu\circ\sigma(n)})_{n\in\omega}$. 
\end{theorem}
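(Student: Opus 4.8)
The plan is to show that the set in question is co-meagre in $S_\infty$ (with its usual Polish topology, a basic open set being determined by a finite partial injection) by exhibiting it as a countable intersection of dense open sets. The natural ``target'' events that characterize $\RR$ are the extension axioms: for each pair of disjoint finite sets $U,W\subseteq\omega$ there is a vertex joined to everything in $U$ and to nothing in $W$. A $\nu\circ\sigma(\mathbf p)$-random graph is almost surely isomorphic to $\RR$ precisely when all these axioms hold with probability one. By the second Borel--Cantelli lemma (the events ``vertex $v$ realizes the pattern $(U,W)$'' are independent for distinct $v$), a sufficient condition is that for every finite pattern $(U,W)$ the sum over $v$ of the probabilities of realizing $(U,W)$ at $v$ diverges. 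So I would fix a countable family of ``tasks'': enumerate all triples $(U,W,N)$ where $U,W$ are disjoint finite subsets of $\omega$ and $N\in\omega$, and for each such triple let $D_{U,W,N}$ be the set of $\sigma\in S_\infty$ such that the partial sum $\sum_{v} \mu_{\nu\circ\sigma(\mathbf p)}(\text{$v$ realizes $(U,W)$})$ already exceeds $N$ using only finitely many edges, all of whose $\sigma$-preimages lie in a finite initial segment. Then $\bigcap_{U,W,N} D_{U,W,N}$ is contained in the set from the statement, and it remains to check each $D_{U,W,N}$ is open and dense.

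Openness is immediate: membership in $D_{U,W,N}$ is witnessed by $\sigma$ on a finite set of coordinates (the finitely many $n$ with $\nu\circ\sigma(n)$ among the relevant edges), so any $\tau$ agreeing with $\sigma$ there also lies in $D_{U,W,N}$. Density is where the Borel--Cantelli hypothesis on $\mathbf p$ is used. Given an arbitrary finite partial injection $s$, I want to extend it to some $\sigma\in D_{U,W,N}$. Here I invoke Lemma \ref{LemM}: since $\mathbf p$ is Borel--Cantelli, I can earmark, for the pattern $(U,W)$ which involves $|U|+|W|=:k$ edges per candidate vertex, infinitely many pairwise disjoint ``blocks'' of $k$ coordinates of $\mathbf p$ whose product-probabilities (edge on the $U$-part, non-edge on the $W$-part) sum to infinity, while keeping infinitely many coordinates of $\omega$ unused --- and, crucially, all of this can be arranged avoiding the finitely many coordinates already committed by $s$. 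I then map these blocks, via an extension of $s$, onto the edge-sets $\{\{v,u\}:u\in U\}\cup\{\{v,w\}:w\in W\}$ for a sequence of fresh candidate vertices $v$ lying outside $U\cup W$. Because the earmarked block-probabilities sum to infinity, a finite sub-collection of these blocks already pushes the partial sum above $N$; choosing that finitely many blocks, I get a finite partial injection extending $s$ and forcing membership in $D_{U,W,N}$, which then extends to a full permutation. Hence $D_{U,W,N}$ is dense.

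The main obstacle, and the step that needs the most care, is the bookkeeping in the density argument: one must simultaneously (i) respect the finitely many coordinates already fixed by $s$, (ii) route each earmarked block of probabilities to the correct tuple of edges incident to a genuinely new vertex disjoint from $U\cup W$ and from previously used vertices, and (iii) ensure that the candidate-vertex events used for different candidates are edge-disjoint so the independence underlying the Borel--Cantelli estimate is valid. Lemma \ref{LemM} is tailored precisely to hand us such a reservoir of disjoint blocks with divergent total probability while leaving an infinite residue (so that later tasks $D_{U',W',N'}$ still have room); the argument is essentially a back-and-forth/amalgamation of these finitely-supported commitments. Once density and openness of each $D_{U,W,N}$ are established, the Baire category theorem gives that $\bigcap_{U,W,N}D_{U,W,N}$ is co-meagre, and since every $\sigma$ in this intersection makes each extension axiom hold almost surely --- hence makes the $\nu\circ\sigma(\mathbf p)$-random graph almost surely isomorphic to $\RR$ --- the theorem follows.
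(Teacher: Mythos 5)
Your proposal is correct and follows essentially the same route as the paper: the paper's set $W$ is exactly your intersection $\bigcap_{U,W,N}D_{U,W,N}$ (written as a $G_\delta$ condition that the extension-axiom sums diverge, with $\#U=\#W$, which suffices after padding), density of each condition is obtained from Lemma \ref{LemM} by extending a finite partial injection, and the conclusion follows from the second Borel--Cantelli lemma together with the Baire category theorem. Your extra bookkeeping remarks (edge-disjointness of the candidate-vertex events, avoiding the finitely many committed coordinates) just spell out what the paper calls ``straightforward to verify.''
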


\begin{proof}
    Let us denote 

\[
W := \left\{ \sigma \in S_\infty \colon 
\mathop{\scalebox{1.8}{$\forall$}}\limits_{\substack{
A,B \in [\omega]^{<\omega} \\
A \cap B = \varnothing \\
\#A = \#B
}}\,\mathop{\scalebox{1.8}{$\forall$}}\limits_M\,
\sum\limits_{n \notin A \cup B}
\left(
  \prod\limits_{k \in A} \mathbf{p}_{\nu\circ\sigma (n,k)}
\right)
\left(
  \prod_{l \in B} \left(1 - \mathbf{p}_{\nu\circ\sigma(n,l)}\right)
\right)
> M
\right\}.
\]

    It is straightforward to verify that $W$ is $G_\delta$ and for every $\sigma\in W$, the Rado graph is $(p_{\nu\circ\sigma})$-random. It follows from Lemma \ref{LemM}, that for all disjoint finite \(A,B\subset\omega\) with \(\#A=\#B\), any \(M>0\), and any finite partial injection \(\mathbf q\colon\omega\to\omega\), we may extend $\textbf{q}$ to a finite injection $\textbf{r}$ such that for every $\sigma\in S_\infty$ extending $\textbf{r}$ we have 
    $$\sum\limits_{n \notin A\cup B} \left(\prod\limits_{k\in A} \textbf{p}_{\sigma(n,k)} \right)\left( \prod\limits_{l\in B}  (1-\textbf{p}_{\sigma(n,l)})\right)>M.$$ 
    Therefore $W$ is dense $G_\delta$ and hence co-meagre.

\end{proof}

\begin{rem}
    In an analogous way one can prove that whenever $\sum\limits_n p_n=\infty$ the set 
    \[
    \left\{ \sigma\in S_\infty \colon \mu_{\nu\circ\sigma(\mathbf{p})}(\mathcal{H})=1 \right\}
    \]
    is co-meagre, where $\mathcal{H}$ denotes the class of all graphs with all vertices of infinite degree.
\end{rem}

\subsection{General procedure for generating $\textbf{p}$-random graphs.}
The next proposition provides a way to generate a number of $\textbf{p}$-random graphs.

\begin{df}
Let \(\mathcal{K}\neq\varnothing\) be a countable family of countable graphs.  We say that \(\mathcal{K}\) is \emph{closed} if it satisfies the following conditions:
\begin{itemize}
  \item If \(G\in\mathcal{K}\) and \(G'\) is a finite modification of $G$, then \(G'\in\mathcal{K}\).
  \item If \(G_{1},G_{2}\in\mathcal{K}\), then $G_1+G_2 \in \mathcal K$.
\end{itemize}
 Given a closed family $\K$, we define 
 \[\UN(\K)=\sum\limits_{K\in \mathcal K}K.\] 
\end{df}
Note that by the closedness of $\mathcal K$, $U(\mathcal K)$ contains infinitely many isolated copies of each $G\in \mathcal K$.
\begin{exa}
The class of all finite graphs is clearly closed. The corresponding graph
\(\UN(\mathcal{K})\) is $\UU_{\fin}$ from Definition~\ref{prop:GIG}.
\end{exa}

\begin{theorem}\label{T:GenerowanieK}
Let \(\mathcal{K}\) be a closed family of graphs.  Then the graph \(\UN(\mathcal{K})\) is strongly $\Acc_{\{0,1\}}$-drawable. 
\end{theorem}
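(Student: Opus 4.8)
The plan is to adapt the strategy from the proof of Proposition \ref{prop:GIG}, replacing the enumeration of finite connected graphs with an enumeration of the countable family $\mathcal K$, and exploiting an $\Acc_{\{0,1\}}$ sequence instead of a Borel--Cantelli one. First I would fix a sequence $\mathbf p = (p_n)_{n\in\omega}\in\Acc_{\{0,1\}}$, so that $\mathbf p$ has both $0$ and $1$ as accumulation points. Enumerate $\mathcal K = \{K_i : i<\omega\}$; by closedness of $\mathcal K$ each $K_i$ has infinitely many isolated copies inside $\UN(\mathcal K)$, so it suffices to produce a random graph that, with probability one, decomposes as a disjoint sum in which each $K_i$ appears as an isolated ``block'' infinitely often, with only finitely many stray edges between blocks. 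Since the $K_i$ may be infinite, the ``blocks'' must be infinite sets $A^i_j\subseteq\omega$ (pairwise disjoint, $\bigcup_{i,j} A^i_j = \omega$), with $\#A^i_j = \#V(K_i)$, and on each $[A^i_j]^2$ we fix a prescribed isomorphic copy $K^i_j$ of $K_i$.

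The key mechanism is this: because $0$ is an accumulation point of $\mathbf p$, there is a subsequence $p_{n_t}\to 0$ with $\sum_t p_{n_t}<\infty$; we reserve these small probabilities for all pairs lying in \emph{different} blocks (the ``inter-block'' pairs), so that by the first Borel--Cantelli lemma only finitely many inter-block edges appear almost surely, forcing every connected component of the random graph $G$ to be contained — up to the finitely many extra edges — in a single block. Dually, because $1$ is an accumulation point, there is a subsequence $p_{m_s}\to 1$ with $\sum_s (1-p_{m_s})<\infty$. Now for a fixed block $A^i_j$, the target copy $K^i_j$ prescribes, for each pair in $[A^i_j]^2$, whether it should be an edge or a non-edge; I would assign to each ``should-be-edge'' pair a probability from the $p_{m_s}\to1$ reservoir and to each ``should-be-non-edge'' pair a probability from the $p_{n_t}\to 0$ reservoir. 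Since $\sum_s(1-p_{m_s})<\infty$ and $\sum_t p_{n_t}<\infty$, the second Borel--Cantelli direction (or just a direct estimate: the expected number of ``wrong'' pairs in $A^i_j$ is finite, and in fact can be made summable over all blocks) guarantees that with probability one only finitely many blocks $A^i_j$ fail to carry exactly $K^i_j$, and within each of the remaining blocks the induced subgraph is exactly $K^i_j\cong K_i$. The one subtlety is bookkeeping: there are infinitely many blocks and each needs a disjoint infinite supply of ``near-$0$'' and ``near-$1$'' probabilities, and we simultaneously need a near-$0$ supply for inter-block pairs; this is handled by splitting the near-$0$ subsequence and the near-$1$ subsequence of $\mathbf p$ into infinitely many infinite pieces, and the near-$0$ one into one more piece for inter-block pairs, choosing the pieces so that the relevant sums $\sum p_n$ (resp. $\sum(1-p_n)$) over each piece, and over the union used for block $A^i_j$, are all finite with the tail sums going to $0$ fast enough that $\sum_{i,j}(\text{prob. that }G\!\restriction_{A^i_j}\neq K^i_j)<\infty$.

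Putting it together: with probability one, $G$ has only finitely many inter-block edges, hence (after these finitely many edges are accounted for) its connected components split along the blocks; with probability one all but finitely many blocks $A^i_j$ carry exactly $K^i_j$. A finite modification removing the stray inter-block edges and correcting the finitely many bad blocks turns $G$ into $\sum_{i,j} K^i_j$, which is $\sum_{i<\omega}\bigl(\sum_{j<\omega}K_i\bigr)\cong \sum_{K\in\mathcal K}K=\UN(\mathcal K)$ because $\mathcal K$ is closed under finite modification and finite isolated unions (so $\UN(\mathcal K)$ absorbs the finitely many altered components, exactly as in the $2\Rightarrow 3$ step of Proposition \ref{r:GIGunique}). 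Hence $G$ is almost surely isomorphic to $\UN(\mathcal K)$, i.e. $\UN(\mathcal K)$ is strongly $\Acc_{\{0,1\}}$-drawable.

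\textbf{Main obstacle.} The hard part will not be any single estimate but the simultaneous allocation of probabilities: we must carve the near-$0$ and near-$1$ accumulating subsequences of $\mathbf p$ into infinitely many infinite sub-blocks (plus a dedicated inter-block near-$0$ reservoir), and pair them with the infinitely many blocks $A^i_j$ in such a way that every relevant series converges and, crucially, the total expected number of ``defective'' blocks is finite — all while the complement of the used index set is handled and no probability is used twice. Keeping this combinatorial/measure-theoretic bookkeeping consistent (and verifying that the absorption argument via closedness of $\mathcal K$ really does eat the finitely many exceptional components, including when the $K_i$ are infinite) is the delicate point; everything else follows the template of Proposition \ref{prop:GIG} and the two Borel--Cantelli lemmas.
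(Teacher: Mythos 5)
Your outline follows the paper's strategy (block decomposition with prescribed copies $K^i_j$, summable near-$0$ probabilities on inter-block pairs, summable near-$1$ probabilities on planned edges, Borel--Cantelli, then absorption of finitely many defects via closedness of $\mathcal K$), but it has a genuine gap exactly at the point you flag as "bookkeeping" and then leave unresolved: strong drawability requires a \emph{bijection} $\sigma\colon\omega\to[\omega]^2$, so every term of $\mathbf p$ must be used. A sequence in $\Acc_{\{0,1\}}$ only has $0$ and $1$ as accumulation points; after you extract a set $A_0$ with $\sum_{n\in A_0}p_n<\infty$ and a set $A_1$ with $\sum_{n\in A_1}(1-p_n)<\infty$, the leftover index set $\omega\setminus(A_0\cup A_1)$ is in general infinite and its probabilities are uncontrollable (think of infinitely many terms equal to $\tfrac12$, or non-summable tails such as $p_{2n}=\tfrac{1}{n+2}$, $p_{2n+1}=1-\tfrac{1}{n+2}$). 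Your allocation assigns pairs only from the two summable reservoirs, according to whether a pair is inter-block, a planned edge, or a planned non-edge, and leaves these leftover terms with nowhere to go. Wherever you do put them, the corresponding coordinates are independent coin flips that are not asymptotically biased toward the planned value, so almost surely \emph{infinitely} many of those pairs deviate from the plan; the realized graph is then not a finite modification of $\sum_{i,j}K^i_j$, and your estimate "$\sum_{i,j}\Pr(G\restriction_{A^i_j}\neq K^i_j)<\infty$" cannot be arranged. So the argument as written does not close.

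The missing idea, which is the one genuinely new ingredient in the paper's proof compared with Proposition \ref{prop:GIG}, is to make the uncontrolled probabilities harmless rather than trying to control them: the paper fixes one distinguished pair $h_n\in[B^n_n]^2$ in each diagonal block and maps the entire leftover set $A_2=\omega\setminus(A_0\cup A_1)$ onto these pairs (one leftover probability per distinguished pair). Then, whatever the random outcome on the $h_n$'s, each block differs from its planned copy of an element of $\mathcal K$ in at most one edge, and closedness of $\mathcal K$ under finite modifications (together with closedness under finite isolated unions, which also absorbs the finitely many stray inter-block edges and the finitely many blocks spoiled by the Borel--Cantelli exceptions) shows the resulting disjoint sum is still isomorphic to $\UN(\mathcal K)$. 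With this device inserted, the rest of your argument — the summable reservoirs, the first Borel--Cantelli lemma, and the absorption step in the spirit of Proposition \ref{r:GIGunique} — goes through essentially as in the paper.
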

\begin{proof}
Fix a sequence $\mathbf{p}\in \Acc_{\{0,1\}}$. Enumerate the family \(\mathcal{K}=\{G_{n} : n\in\omega\}\), and split \(\omega\) into pairwise disjoint sets \(B_{n}^{m}\) such that \(\#B_{n}^{m}=\#G_{n}\) for \(n,m\in\omega\).  For all \(n,m\) we fix a graph \(H_{n}^{m}\) on \(B_{n}^{m}\) that is isomorphic to \(G_{n}\). 

We fix a partition of $\omega$ into three infinite subsets $A_0,A_1, A_2$ such that:
\begin{enumerate}
    \item $\sum\limits_{n\in A_0}p_n<\infty$,
    \item $\sum\limits_{n\in A_1}\left(1-p_n\right)<\infty$,
    \item $A_2=\omega \setminus (A_0\cup A_1)$.
\end{enumerate}
For each $n<\omega$ we fix a distinguished pair of points $h_n\in [B_n^n]^2$, and define a bijection $\sigma:\omega\rightarrow [\omega]^2$ subject to the following conditions:
\begin{itemize}
\item $\sigma[A_0]=\left\{\{a,b\} \in [\omega]^2 : \mathop{\scalebox{1.3}{$\exists$}}\limits_{\substack{m,n<\omega}}  \ \{a,b\} \in \left[B^m_n\right]^2\setminus \left(E(H^m_n)\cup\{h_n\}\right)\right\}\cup$
$$\left\{\{a,b\} \in [\omega]^2 : \raisebox{0.5ex}{$\neg$}\!\!\mathop{\scalebox{1.3}{$\exists$}}\limits_{\substack{m,n<\omega}}  \  \{a,b\} \in [B_n^m]^2\right\},$$
\item $\sigma[A_1]=\left\{\{a,b\} \in [\omega]^2 :
\mathop{\scalebox{1.3}{$\exists$}}\limits_{\substack{m,n<\omega}} \  \{a,b\} \in E(H^m_n)\setminus\{h_n\}\right\},$
\item $\sigma[A_2]\subseteq \{h_n : n<\omega\}.$
\end{itemize}
Roughly speaking, we use elements of $A_0$ for planning needed non-edges within $B_n^m$'s and between different $B_n^m$'s, elements of $A_1$ to plan needed edges within $B_n^m$'s, and we put elements of $A_2$ on $h_n$'s so it cannot destroy the big picture. 

By the first Borel--Cantelli lemma, the resulting random graph will almost always be of the form
$$G=\sum\limits_{n<\omega} \sum\limits_{m<\omega}\tilde{H}_n^m$$
where:
\begin{itemize}
    \item For all but finitely many $n<\omega$, $\tilde{H}_n^n$ differs from $H^n_n$ at most on the edge $h_n$,
    \item For all but finitely many $n,m<\omega,\ n\neq m$, $\tilde{H}_n^m=H^m_n$.
\end{itemize}
Given that $\mathcal K$ is closed, a graph of this form is always isomorphic to
\(\UN(\mathcal{K})\).
\end{proof}

The preceding theorem motivates the study of closed families $\mathcal{K}$, particularly those generated by simpler or more structured subfamilies. We begin with the following result.

\begin{prop}
Let $\mathcal{K}_0$ be a countable family of graphs. Then there exists the smallest closed family $\mathcal{K}$ containing $\mathcal{K}_0$. We say that $\mathcal{K}$ is \emph{generated} by $\mathcal{K}_0$, and write $\K=\langle \K_0\rangle$. 
\end{prop}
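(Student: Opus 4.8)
The plan is to construct $\mathcal{K}$ as the closure of $\mathcal{K}_0$ under the two operations that define \emph{closed}, and verify it is the smallest such family. First I would observe that the class of \emph{all} closed families containing $\mathcal{K}_0$ is nonempty: the family of all countable graphs is closed and contains $\mathcal{K}_0$. The naive approach would be to take the intersection $\mathcal{K} := \bigcap \{\mathcal{L} : \mathcal{L} \text{ closed}, \ \mathcal{K}_0 \subseteq \mathcal{L}\}$; one checks routinely that an intersection of closed families is closed (both defining conditions are preserved under intersection, since if $G \in \mathcal{K}$ then $G$ lies in every $\mathcal{L}$, hence so do its finite modifications and its isolated sums with other members). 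This $\mathcal{K}$ is then by construction the smallest closed family containing $\mathcal{K}_0$.

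The one genuine subtlety is the \emph{countability} requirement built into the definition of a closed family: the intersection argument only produces a closed family if we know at least one closed \emph{countable} family containing $\mathcal{K}_0$ exists, since otherwise the intersection is taken over an empty index set. So the real content is an explicit bottom-up construction. I would define $\mathcal{K}$ by recursion: set $\mathcal{K}^{(0)}$ to be $\mathcal{K}_0$ together with all finite modifications of its members; then let $\mathcal{K}^{(n+1)}$ consist of $\mathcal{K}^{(n)}$ together with all graphs of the form $G_1 + G_2$ with $G_1, G_2 \in \mathcal{K}^{(n)}$ and all finite modifications of such graphs; finally put $\mathcal{K} = \bigcup_{n<\omega} \mathcal{K}^{(n)}$. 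Each $\mathcal{K}^{(n)}$ is countable: a countable family has only countably many finite modifications up to the relevant bookkeeping (a finite modification is determined by a finite subset of $[\omega]^2$, of which there are countably many) and only countably many pairwise isolated sums, so countability is preserved at each stage and hence in the countable union.

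It then remains to check that $\mathcal{K}$ is closed and smallest. Closedness: if $G \in \mathcal{K}$ then $G \in \mathcal{K}^{(n)}$ for some $n$, and any finite modification of $G$ lies in $\mathcal{K}^{(n+1)} \subseteq \mathcal{K}$; if $G_1, G_2 \in \mathcal{K}$ then both lie in some common $\mathcal{K}^{(n)}$, so $G_1 + G_2 \in \mathcal{K}^{(n+1)} \subseteq \mathcal{K}$. Minimality: if $\mathcal{L}$ is any closed family with $\mathcal{K}_0 \subseteq \mathcal{L}$, an induction on $n$ shows $\mathcal{K}^{(n)} \subseteq \mathcal{L}$ for all $n$ — the base case uses closure of $\mathcal{L}$ under finite modifications, and the inductive step uses closure under both operations — whence $\mathcal{K} \subseteq \mathcal{L}$. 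I expect the main (mild) obstacle to be pinning down the countability bookkeeping cleanly: one must be slightly careful that "finite modification" ranges over a countable set and that taking isolated sums does not inflate cardinality, but once the index sets are identified with subsets of $\omega$ or $[\omega]^2$ this is immediate. Everything else is a routine closure-under-intersection / least-fixed-point argument.
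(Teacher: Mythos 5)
Your proof is correct, but it takes a different route from the paper's. You build the closure as the union of an $\omega$-chain of stages $\mathcal{K}^{(n)}$, each obtained from the previous one by adjoining isolated sums and finite modifications, and then verify countability, closedness and minimality for the union --- a standard least-fixed-point iteration. The paper instead shows the closure is reached after a \emph{single} pass of each operation in the right order: let $\mathcal{K}_1$ be all finite isolated unions of members of $\mathcal{K}_0$, and $\mathcal{K}_2$ all finite modifications of members of $\mathcal{K}_1$; the key observation making this work is that the operations commute in the needed direction, namely if $G_0,G_1$ are finite modifications of $H_0,H_1\in\mathcal{K}_1$, then $G_0+G_1$ is a finite modification of $H_0+H_1\in\mathcal{K}_1$, so $\mathcal{K}_2$ is already closed. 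The paper's argument buys an explicit normal form for the elements of $\langle\mathcal{K}_0\rangle$ (finite modification of a finite isolated union of generators), which is more informative; yours is more generic, needs no commutation observation, and is the one that generalizes mechanically to other closure operators. You are also more explicit than the paper about countability and minimality, which is harmless. One small slip: your claim that the family of all countable graphs is a closed family containing $\mathcal{K}_0$ is false under the paper's definition, since a closed family is required to be countable and there are continuum many countable graphs; but you immediately flag exactly this countability issue yourself and base the actual proof on the bottom-up construction, so nothing in your argument depends on that remark. Your transitivity-of-finite-modification bookkeeping (a finite modification of a finite modification is a finite modification) is used implicitly in the closedness check and could be stated once, but this is cosmetic.
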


\begin{proof}
Given a family $\mathcal{K}_0$, let $\mathcal{K}_1$ be the family of all isolated, finite unions of elements of $\K_0$.  Next, let $\mathcal{K}_2$ denote the family of all graphs obtained by modifying a finite set of edges in some graph from $\mathcal{K}_1$. 

Taking a graph $G \in \K_2$, and modifying finitely many of edges of $G$, we obtain a graph $G' \in \K_2$ by definition. We now argue that $\mathcal{K}_2$ is also closed under finite, isolated unions. Let $G_0, G_1 \in \mathcal{K}_2$. Then there exist graphs $H_0, H_1 \in \mathcal{K}_1$ such that each $G_i$ is a finite edge modification of $H_i$ for $i = 0,1$. Consequently, the union $G_0 + G_1$ is a finite modification of the union $H_0 + H_1$. Since $H_0 + H_1 \in \mathcal{K}_1$ by the definition of $\mathcal{K}_1$, it follows that $G_0 + G_1 \in \mathcal{K}_2$.
\end{proof}

\begin{theorem}
Let \(H\) be any graph, and let \(G\) be strongly \(\Acc_{\{0,1\}}\)-drawable. Then the graph \(G+H\) is strongly* \(\Acc_{\{0,1\}}\)-drawable.\dgreen
\end{theorem}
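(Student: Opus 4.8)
The plan is to show that $G+H$ is strongly* $\Acc_{\{0,1\}}$-drawable by mimicking the proof of Theorem~\ref{T:GenerowanieK}, but reserving a separate infinite block of coordinates on which a fixed copy of $H$ will be \emph{drawn exactly} (up to a finite modification, which is why we only get the starred version). Fix any sequence $\mathbf{p}\in\Acc_{\{0,1\}}$. First I would split $\omega$ into two infinite pieces, $\omega = P_G \sqcup P_H$, in such a way that the subsequence $(p_n)_{n\in P_G}$ is still in $\Acc_{\{0,1\}}$ --- this is possible since having $0$ and $1$ as accumulation points is preserved when we pass to one half of a suitable partition. The coordinates in $P_G$ will be used, exactly as in Theorem~\ref{T:GenerowanieK}, to realise a copy of $G$ on some vertex set $W_G\subseteq\omega$ with $\#W_G = \#G$; since $G$ is strongly $\Acc_{\{0,1\}}$-drawable this succeeds with probability one.

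Next I would handle the copy of $H$. Partition the remaining vertex set $W_H = \omega\setminus W_G$ (which is infinite, matching $\#H$ when $H$ is countably infinite; if $H$ is finite we only need finitely many vertices and the argument is easier) and fix a target graph $H'$ on $W_H$ isomorphic to $H$. Using the coordinates in $P_H$ --- which, since $\mathbf{p}$ has both $0$ and $1$ as accumulation points, contain an infinite subset $I_0$ with $\sum_{n\in I_0}p_n<\infty$ and an infinite subset $I_1$ with $\sum_{n\in I_1}(1-p_n)<\infty$ --- assign probabilities so that the pairs inside $W_H$ that are non-edges of $H'$, together with \emph{all} pairs joining $W_G$ to $W_H$, receive probabilities from $I_0$, and the pairs inside $W_H$ that are edges of $H'$ receive probabilities from $I_1$. (Any leftover coordinates of $P_H$ can be absorbed into these two classes, or placed on a negligible set of pairs as in Theorem~\ref{T:GenerowanieK}.) By the first Borel--Cantelli lemma, with probability one only finitely many of these designated pairs come out ``wrong'', so the induced graph on $W_H$ is a finite modification of $H'$, and only finitely many edges run between $W_G$ and $W_H$. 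Combining the two blocks, with probability one the resulting random graph is, up to a finite modification, exactly the isolated union of (a graph isomorphic to) $G$ and (a graph isomorphic to) $H$, i.e.\ it lies in $\iso^*(G+H)$.

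The one subtlety I expect to be the main obstacle is the bookkeeping of \emph{which} bijection $\sigma\colon\omega\to[\omega]^2$ realises all of the above simultaneously: the proof of Theorem~\ref{T:GenerowanieK} already consumes a carefully engineered $\sigma$ on the $G$-side (with its auxiliary sets $B_n^m$, distinguished pairs $h_n$, and the three-part partition $A_0,A_1,A_2$), and I must interleave this with the $H$-side assignment without the two interfering --- in particular the pairs joining $W_G$ to $W_H$ must be routed to a summable batch of probabilities so that the two copies stay isolated from each other. Since both $(p_n)_{n\in P_G}$ and $(p_n)_{n\in P_H}$ are independently chosen infinite subsequences, and each of the finitely many ``cross'' and ``internal-$H$'' requirements only needs a summable (resp.\ co-summable) tail, there is no real conflict: the Borel--Cantelli bounds are one-sided and additive over the disjoint coordinate blocks. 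The reason the conclusion is only \emph{strongly*} rather than strongly is precisely that the $H$-block is realised only up to a finite modification; but since $G+H$ need not be invariant under finite modifications (e.g.\ when $H$ is a single edge), this is the best one can hope for, and $\iso^*(G+H)$ is exactly the right target.
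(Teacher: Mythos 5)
Your construction follows the same route as the paper's proof: split the vertex set into a $G$-part and an $H$-part, draw $G$ on the first part using a subsequence that is still in $\Acc_{\{0,1\}}$, hard-code a copy $H'$ of $H$ on the second part by putting a summable batch of probabilities on its prescribed non-edges and a co-summable batch on its prescribed edges, route all cross pairs to a summable batch, and invoke the first Borel--Cantelli lemma to conclude that the outcome is almost surely a finite modification of $G+H$. The genuine gap is exactly the point you flagged and then dismissed: the assignment must be an honest bijection $\sigma\colon\omega\to[\omega]^2$, so every coordinate of $P_H$ must land on some pair. Your first remedy, absorbing the leftover coordinates of $P_H$ into $I_0$ or $I_1$, fails in general: an $\Acc_{\{0,1\}}$ sequence may contain infinitely many middling values (say $p_n=\tfrac12$ on an infinite set), and if these lie in $P_H$ they cannot be added to either class without destroying $\sum_{n\in I_0}p_n<\infty$ or $\sum_{n\in I_1}(1-p_n)<\infty$; by independence and the second Borel--Cantelli lemma, infinitely many prescribed non-edges of $H'$ (or cross pairs) would then be realized almost surely, and the $H$-block is no longer a finite modification of $H'$, so the argument collapses. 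Your second remedy, parking the leftovers on a negligible set of pairs as in Theorem~\ref{T:GenerowanieK}, has no analogue here: the sacrificial pairs $h_n$ existed there because $\UN(\K)$ for a closed family is insensitive to changing one pair per block, whereas $G+H$ comes with no designated set of pairs on which the outcome is irrelevant (that is essentially what Conjecture~\ref{con:CIT} would provide, and it is open).

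The repair is what the paper does: choose the special classes \emph{first}, consisting only of coordinates whose values are suitably close to $0$ or to $1$ (the paper's $S_{GH},S_H$ with $\sum p_n<1$ and $B_H$ with $\sum(1-p_n)<1$), and throw \emph{every} remaining coordinate, in particular all middling values, into the block $A_G$ used to draw $G$; this is harmless because the only requirement on that block is $(p_n)_{n\in A_G}\in\Acc_{\{0,1\}}$, which survives removing the three special classes. Equivalently, in your notation, you should define $P_H:=I_0\cup I_1$ at the outset so that no leftovers exist. A second, smaller bookkeeping point you leave untreated and the paper handles explicitly: if $H'$ has only finitely many edges (or only finitely many non-edges), the coordinate class assigned to them must be taken finite, since otherwise the cardinalities of coordinate classes and pair classes cannot match under a bijection; this is the paper's closing remark about almost cliques and almost anticliques.
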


\begin{proof}
Fix a sequence \((p_n)_{n\in\omega}\in\Acc_{\{0,1\}}\).  
Consider a partition of $
   \omega$ into four infinite subsets: $A_G, S_{GH}, S_H, B_H$, chosen so that: 

   \begin{enumerate}
    \item $\sum\limits_{n\in S_{GH}}p_n<1,\quad \sum\limits_{n\in S_H}p_n<1,$
       \item $\sum\limits_{n\in B_H}1-p_n<1,$
       \item $(p_n)_{n\in A_G}\in \Acc_{\{0,1\}}$.  
   \end{enumerate}

Next, split \(\omega\) into two infinite disjoint sets \(D_G\) and \(D_H\). Let \(H'\) be a graph on vertex set \(D_H\) isomorphic to \(H\). We now assign the
probabilities \(p_n\)'s to unordered pairs of vertices as follows:

\begin{itemize}
  \item For pairs of vertices both in \(D_H\): assign probabilities from
    \(B_H\) to those pairs that are edges in \(H'\), and probabilities from
    \(S_H\) to those pairs that are non‑edges in \(H'\).
  \item For pairs with one vertex in \(D_G\) and the other in \(D_H\): assign
    probabilities from \(S_{GH}\).
  \item For pairs of vertices both in \(D_G\): assign probabilities from
    \(A_G\) in such a way that, with probability one, the
    random graph induced on \(D_G\) is isomorphic to \(G\). 
\end{itemize}

The last assignment is possible because \(G\) is strongly
\(\Acc_{\{0,1\}}\)-drawable and the subsequence
\((p_n)_{n\in A_G}\) still belongs to \(\Acc_{\{0,1\}}\).

By the first Borel--Cantelli lemma, only finitely many edges appear between \(D_G\)
and \(D_H\) in the random realization.  Similarly, again by
the first Borel--Cantelli lemma the random graph induced on \(D_H\) is almost surely a finite modification of \(H'\). Combining these facts, we obtain that the random graph obtained by such probabilities assignment is isomorphic to a finite modification of \(G+H\).

Finally, if \(H\) is an almost\footnote{up to finitely many edges} clique or an almost anticlique, the
same argument applies after choosing the set \(S_H\) or \(B_H\) to be finite.
\end{proof}

\begin{df} 
Given a graph $G$, we define its \emph{sequence of degrees} as the function
\[
\ds(G) \in \left( \omega \cup \{\infty\} \right)^{\left( \omega \cup \{\infty\} \right)},
\]
which assigns to each $n \in \omega \cup \{\infty\}$ the number of vertices in $G$ having degree $n$. 
\end{df}

\begin{obs}\label{O:stopnie wierzcholkow}
    If $G$ and $H$ are isomorphic then $\ds(G)=\ds(H)$.
\end{obs}

\begin{lemma}\label{L:OdzyskacDS}
Let $G$ be a graph with the property that any finite modification of $G$ has exactly one infinite connected component and only finitely many finite connected components (possibly none). Then, given the graph $\UN(\langle \{G\} \rangle)$, we can recover the degree sequence $\ds(G)$ up to finitely many values. That is, the invariant $\ds(G)$ is determined modulo a finite  ambiguity by the structure of $\UN(\langle \{G\} \rangle)$.
\end{lemma}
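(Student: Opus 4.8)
The key observation is that $\UN(\langle\{G\}\rangle)$ is an isolated union of countably many graphs, each of which is a finite isolated union of finite modifications of $G$, and moreover every such graph occurs infinitely often. I first need to describe the connected components of $\UN(\langle\{G\}\rangle)$. Since $G$ has the stated property (every finite modification has exactly one infinite component and finitely many finite components), the same holds for any finite isolated union of finite modifications of $G$, except that now there may be \emph{several} infinite components — precisely as many as the number of summands. Passing to a single isolated union, the components of $\UN(\langle\{G\}\rangle)$ split into (a) the infinite components, of which there are infinitely many, each arising as the infinite component of some finite modification of $G$; and (b) the finite components, of which there are also infinitely many, each arising as a finite component of some finite modification of $G$.

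Next I would extract the degree information. For the infinite components: each one is the infinite component of a finite modification $G'$ of $G$. Its degree sequence differs from $\ds(G)$ — well, from the degree sequence of the infinite component of $G$ — only at finitely many vertices, with finite total discrepancy bounded by (twice) the number of modified edges. The plan is to argue that among the infinitely many infinite components we can find ones realized by modifications using arbitrarily few edges — in fact, using \emph{zero} edges, i.e. $G$ itself occurs, since $\langle\{G\}\rangle$ contains $G$ and hence $\UN(\langle\{G\}\rangle)$ contains infinitely many isolated copies of $G$. So the infinite component of $G$ itself appears as a component of $\UN(\langle\{G\}\rangle)$, and its degree sequence is the ``infinite part'' of $\ds(G)$. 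One subtlety: we cannot necessarily \emph{identify} which component is an unmodified copy of $G$'s infinite component just by looking; but any infinite component we see is a finite modification of it, so it recovers the infinite part of $\ds(G)$ up to a finite ambiguity — which is exactly what the statement allows. Similarly, the finite components of $\UN(\langle\{G\}\rangle)$ are exactly the finite components appearing (possibly after finite modification) in finite modifications of $G$; the finite components of $G$ itself appear among them, so we recover the ``finite part'' of $\ds(G)$ up to finite ambiguity.

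The main obstacle is bookkeeping the ambiguity correctly and ruling out that the recovery is spoiled by finite components that are genuinely ``new'' — arising only through a modification — rather than being finite components of $G$. I would handle this by noting that finite modifications can only create, destroy, or alter \emph{finitely many} finite components relative to $G$, and by the closedness of $\langle\{G\}\rangle$ one can (by taking $G'=G$) see the true finite components of $G$ among the components of $\UN(\langle\{G\}\rangle)$; hence the multiset of degrees carried by finite components of $\UN(\langle\{G\}\rangle)$ agrees with the finite-component part of $\ds(G)$ outside a finite set. Combining the infinite-component and finite-component analyses, and invoking Observation~\ref{O:stopnie wierzcholkow} so that the choice of representative does not matter, we conclude that $\ds(G)$ is determined by $\UN(\langle\{G\}\rangle)$ up to finitely many values.
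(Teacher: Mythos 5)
Your decomposition of the components of $\UN(\langle\{G\}\rangle)$ is wrong at the crucial point, and the error is exactly the main difficulty of the lemma. Elements of $\langle\{G\}\rangle$ are finite modifications of finite isolated unions $\sum_{i<K}G$, not of $G$ alone; a finite modification may \emph{add} edges between different copies, merging up to $K$ infinite components into one. Hence an infinite connected component of $\UN(\langle\{G\}\rangle)$ need not be a finite modification of the infinite component of $G$ --- it can be a finite perturbation of a disjoint union of several copies of that infinite component glued by finitely many edges. For such a component the degree sequence can disagree with $\ds(G)$ at \emph{infinitely} many indices: if the infinite component of $G$ has exactly one vertex of degree $n$ for infinitely many $n$ (precisely the kind of trees used later, in Proposition \ref{P:Las}, where the whole point is that these sequences separate graphs), then a gluing of two copies has two such vertices for almost every such $n$. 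So your claim that ``any infinite component we see is a finite modification of it'' fails, and with it the claim that looking at an arbitrary infinite component recovers $\ds(G)$ up to finite ambiguity.

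Your fallback --- that pristine copies of $G$ occur, so the true infinite component is present somewhere --- does not repair this, because the recovery has to be carried out from the isomorphism type of $\UN(\langle\{G\}\rangle)$ alone: there is no isomorphism-invariant way in your argument to tell the ``single-copy'' infinite components from the glued ones, and by the above they genuinely differ in degree sequence. The paper supplies the missing idea: fix any infinite component $A$, consider all finite modifications of $A$, and let $N(A)$ be the maximal number of infinite components obtainable; $N(A)$ is finite (bounded by the number $K$ of copies involved), and an infinite component $G'$ of a modification realizing $N(A)$ meets the infinite part of only one copy $V_0$ in an infinite set, all other intersections and $V_0\setminus V(G')$ being finite. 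Comparing degrees at the cofinitely many ``unchanged'' vertices of $V_0$ then gives $\ds(G')=\ds(G)$ off a finite set, and this procedure is defined purely from the structure of $\UN(\langle\{G\}\rangle)$. (Your discussion of finite components is inessential either way: $G$ has only finitely many finite components, so they perturb $\ds(G)$ at only finitely many indices, which the statement already tolerates.)
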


\begin{proof}

Let us first consider the way in which infinite connected component of $\UN(\langle \{G\} \rangle)$ comes to live. By the definition, it must be included in some element of $\langle \{G\} \rangle$, that is some finite modification of some finite union $\sum\limits_{i<K} G$. It is worthy to mention that adding edges allows such component to span across multiple copies of $G$, while removing edges allows to miss finitely many points within each copy of $G$.

Fix an infinite connected component $A$ of $\UN(\langle\{G\}\rangle)$, and let $\mathcal{A}$ denote the family of all finite modifications of $A$. Let 
$$N=N(A)=\sup(\{k <\omega : \mathop{\scalebox{1.2}{$\exists$}}\limits_{\substack{A' \in \mathcal A}} 
 A' \text{ has exactly $k$ infinite connected components} \}).$$ 

We claim that \(N\) is finite.  
Indeed, if \(A\) arises from a finite modification of some finite union $\sum\limits_{i<K} G$,  
and since every finite modification of \(G\) possesses exactly one infinite connected component,  
it follows that \(N \leq K\).

Fix $B \in \mathcal A$ with exactly $N$ infinite connected components and let $G'$ be an infinite connected component of $B$. 

Now we argue $\ds(G')$ and $\ds(G)$ agree at all but finitely many indices. Since $G'$ is included in finite modification of $K$ copies of $G$, let us denote by $V_0, V_1, \ldots V_{K-1}$ sets of vertices of respective copies of $G$. Note that we are not necessarily able to recover $V_i$'s from $G'$. Without loss of generality we may assume that $V(G')\cap V_0$ in infinite. Note that $V(G')\cap V_i$ for every $K>i>0$ and $V_0\setminus V(G')$ are finite. Now, how graph $G'$ differs from a copy of $G$ on $V_0$? Finitely many vertices were removed, finitely many edges were modified, and finitely many vertices were added, yet every new vertex (i.e. element of $V(G')\setminus V_0$) is connected to only finitely many vertices within $V(G')\cap V_0$. We say that an element $\tilde{v}\in V(G')\cap V_0$ is unchanged, if it is not connected with any element of $V(G')\setminus V_0$ and every $v\in V_0$
\[
\{\tilde{v},v\}\in E(G')\Leftrightarrow \{\tilde{v},v\}\in E(G),
\]
were by $G$ we mean respective copy of $G$ on $V_0$. But as we already noticed, all but finitely many elements of $V_0$ are unchanged. But if $\tilde{v}$ is unchanged, then $\deg_G(\tilde{v})=\deg_{G'}(\tilde{v})$.

Since $G'$ was defined using only the structure of  $\UN(\langle \{G\} \rangle)$, the claim follows.

\end{proof}

\begin{rem}
    Note that graph $G'$ obtained in the above proof recovers even more properties of $G$ that almost all values of $\ds(G)$, yet so far we found no use of that fact.
\end{rem}

\begin{prop}\label{P:Las}
There exists a family \(\mathcal{B}\) of cardinality \(\mathfrak{c}\), consisting of infinite satisfying assumption of Lemma \ref{L:OdzyskacDS}, such that for any two distinct graphs \(G_{0}, G_{1}\in\mathcal{B}\), the sequences \(\ds(G_{0})\) and \(\ds(G_{1})\) differ infinitely often.
\end{prop}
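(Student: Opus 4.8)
The plan is to realize each graph in the family $\mathcal B$ as a disjoint union of ``thickened'' infinite paths, where the thickening pattern is coded by an element of $2^\omega$, and then use an almost-disjointness-type argument on these codes to get $\mathfrak c$ many pairwise degree-distinguishable examples. Concretely, I would start from a single infinite ray $P=\{0,1,2,\dots\}$ with edges $\{n,n+1\}$, and for a set $S\subseteq\omega$ attach to the $n$-th vertex of $P$ a pendant structure (say, $\ell_n(S)$ extra leaves, where $\ell_n(S)$ depends on $S$ in a controlled way) so that the resulting graph $G_S$ is connected, infinite, and has the property that every finite modification still has exactly one infinite connected component and only finitely many finite components. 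This last point is immediate: the infinite ray survives any finite edge modification, and only finitely many new finite components can be split off; so $G_S$ satisfies the hypothesis of Lemma~\ref{L:OdzyskacDS}.

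Next I would engineer the pendant pattern so that the degree sequence $\ds(G_S)$ genuinely depends on $S$ in an ``infinitely often'' way. The cleanest device is: enumerate $\omega=\{k_0<k_1<\dots\}$, reserve infinitely many ``large'' values $d_0<d_1<\dots$ of degree that are used by no vertex coming from the ray-plus-small-leaves skeleton, and for each $i$ decide, according to whether $i\in S$, whether $G_S$ contains a vertex of degree exactly $d_i$ (achieved by attaching $d_i-2$ leaves to one designated ray-vertex) or no vertex of degree $d_i$ at all. Then $\ds(G_{S})(d_i)\in\{0,1\}$ records the characteristic function of $S$ on the coordinates $\{d_i:i\in\omega\}$, so for $S_0\neq S_1$ the sequences $\ds(G_{S_0})$ and $\ds(G_{S_1})$ differ at $d_i$ for every $i\in S_0\triangle S_1$. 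To turn ``differ at least once'' into ``differ infinitely often'' I would index the construction not by arbitrary $S\subseteq\omega$ but by a fixed almost disjoint family $\mathcal S\subseteq[\omega]^\omega$ of size $\mathfrak c$ — no, more simply: by an \emph{independent} family, or even more simply by pairs of \emph{infinite} sets whose symmetric difference is infinite. The slickest route: take $\mathcal S$ to be the set of branches of $2^{<\omega}$, identify each branch with its set of ``$1$-levels'', and note any two distinct branches have symmetric difference of size $\mathfrak c$-many... that overshoots; instead just take any family $\mathcal S$ of $\mathfrak c$ subsets of $\omega$ that is pairwise ``eventually different'' in the strong sense that $S\triangle S'$ is infinite for $S\neq S'$ (e.g. the graphs of distinct functions $\omega\to 2$ modified so differences are infinite — concretely, map each $x\in 2^\omega$ to $S_x=\{2n:x(n)=1\}\cup\{2n+1:x(n)=0\}$, so $S_x\triangle S_y$ is infinite whenever $x\neq y$). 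Set $\mathcal B=\{G_{S_x}:x\in 2^\omega\}$.

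The verification then has three routine pieces: (i) each $G_{S}$ is infinite and satisfies the Lemma~\ref{L:OdzyskacDS} hypothesis — handled as above; (ii) distinct codes give distinct degree sequences differing infinitely often — this follows because $\ds(G_{S_x})$ restricted to the reserved degree-values $\{d_i\}$ equals (a copy of) the characteristic function of $S_x$, and $S_x\triangle S_y$ infinite forces infinitely many disagreements; (iii) $|\mathcal B|=\mathfrak c$ — clear, since the map $x\mapsto \ds(G_{S_x})$ is injective by (ii). The one genuine design point, which I expect to be the main obstacle, is arranging the pendant-leaf pattern so that the ``small'' part of the skeleton (the ray together with the bounded-size pendants used for bookkeeping) never accidentally produces a vertex of one of the reserved degrees $d_i$, and so that attaching $d_i-2$ leaves really does create exactly one vertex of degree $d_i$ without colliding with another reserved value; this is a matter of choosing the $d_i$ sparse and large (e.g. $d_i$ strictly increasing and each $d_i$ larger than all degrees forced by the skeleton and by leaves attached for $d_j$, $j<i$), after which everything is bookkeeping. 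I would also remark that, by Theorem~\ref{T:GenerowanieK}, each $\UN(\langle\{G_S\}\rangle)$ is strongly $\Acc_{\{0,1\}}$-drawable, so Lemma~\ref{L:OdzyskacDS} lets one recover $\ds(G_S)$ up to a finite error from the drawable graph $\UN(\langle\{G_S\}\rangle)$, which is the intended application of the proposition.
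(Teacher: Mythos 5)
Your construction is essentially the same as the paper's: the paper also builds trees consisting of one infinite branch (a ray) with pendant leaves realizing a prescribed degree sequence, observes that such trees satisfy the hypothesis of Lemma~\ref{L:OdzyskacDS} (a finite modification leaves exactly one infinite component and finitely many finite ones), and produces $\mathfrak{c}$ many degree sequences that differ infinitely often by an almost-disjoint-family argument. Your ray-plus-reserved-degrees bookkeeping, and your verification of the finite-modification property, are fine.

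There is, however, one step that fails as written: the coding family. You claim that the map $x\mapsto S_x=\{2n: x(n)=1\}\cup\{2n+1: x(n)=0\}$ yields $S_x\triangle S_y$ infinite whenever $x\neq y$. This is false, since $\#(S_x\triangle S_y)=2\cdot\#\{n: x(n)\neq y(n)\}$; if $x$ and $y$ differ in only finitely many coordinates, the symmetric difference is finite, and then $\ds(G_{S_x})$ and $\ds(G_{S_y})$ differ at only finitely many of the reserved degrees $d_i$, which is not enough for the proposition. The repair is the one you mention and then discard: take $\mathcal{S}$ to be an almost disjoint family of infinite subsets of $\omega$ of cardinality $\mathfrak{c}$ (for distinct members $S\triangle S'\supseteq S\setminus S'$ is infinite), or choose one representative from each class of the equal-modulo-finite equivalence on infinite subsets of $\omega$ (each class is countable, so there are $\mathfrak{c}$ classes, and distinct representatives have infinite symmetric difference). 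With that substitution, and the care you already note in keeping the reserved values $d_i$ above the skeleton degrees $1$ and $2$, the rest of your argument goes through and coincides with the paper's first approach.
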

\begin{proof}
It suffices to construct an infinite connected tree \(G\) with the following property:  
removing finitely many edges from \(G\) leaves exactly one infinite connected component and finitely many finite ones,  
and moreover \(\ds(G)=(a_{n})\) for a sufficiently large family of sequences \((a_{n})\).  
We shall present two distinct methods of achieving this construction.

\begin{center}


    
\end{center}

\end{proof}
\begin{theorem}\label{T:ContGraphs}
    Let $\textbf{p} \in \Acc_{\{0,1\}}$. There exist $\cc$ many pairwise non-isomorphic strongly $\{\textbf{p}\}$-drawable graphs. 
\end{theorem}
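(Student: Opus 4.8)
The plan is to combine three results already in hand: the $\cc$-sized family of trees from Proposition \ref{P:Las}, the generating construction of Theorem \ref{T:GenerowanieK}, and the recovery statement of Lemma \ref{L:OdzyskacDS}. Let $\mathcal B$ be the family furnished by Proposition \ref{P:Las}, so that $\#\mathcal B=\cc$, every $G\in\mathcal B$ is an infinite tree satisfying the hypothesis of Lemma \ref{L:OdzyskacDS} (any finite modification has exactly one infinite connected component and only finitely many finite ones), and for distinct $G_0,G_1\in\mathcal B$ the degree sequences $\ds(G_0)$ and $\ds(G_1)$ differ at infinitely many coordinates. To each $G\in\mathcal B$ I would associate the graph $\UN(\langle\{G\}\rangle)$, where $\langle\{G\}\rangle$ denotes the (countable) closed family generated by $\{G\}$.

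First I would note that each such graph is strongly $\{\textbf{p}\}$-drawable. Indeed $\langle\{G\}\rangle$ is closed by construction, so Theorem \ref{T:GenerowanieK} shows that $\UN(\langle\{G\}\rangle)$ is strongly $\Acc_{\{0,1\}}$-drawable; since the given $\textbf{p}$ lies in $\Acc_{\{0,1\}}$, it is in particular strongly $\{\textbf{p}\}$-drawable.

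Next I would verify that the assignment $G\mapsto\UN(\langle\{G\}\rangle)$ is injective up to isomorphism on $\mathcal B$. Suppose $\UN(\langle\{G_0\}\rangle)\simeq\UN(\langle\{G_1\}\rangle)$ for some $G_0,G_1\in\mathcal B$. By Lemma \ref{L:OdzyskacDS}, applied once to $G_0$ and once to $G_1$, there is a single sequence, determined only by the common isomorphism type of these two graphs, that agrees with $\ds(G_0)$ off a finite set and simultaneously with $\ds(G_1)$ off a finite set; hence $\ds(G_0)$ and $\ds(G_1)$ agree off a finite set. By the defining property of $\mathcal B$ this forces $G_0=G_1$. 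Therefore $\{\UN(\langle\{G\}\rangle):G\in\mathcal B\}$ is a family of $\cc$ pairwise non-isomorphic strongly $\{\textbf{p}\}$-drawable graphs, which is the assertion of the theorem.

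The one point to be careful about — and essentially the only place where anything could go wrong — is this last step: one needs the finite ambiguity in Lemma \ref{L:OdzyskacDS} to depend only on the isomorphism type of $\UN(\langle\{G\}\rangle)$, which is precisely how that lemma is phrased, so that an isomorphism between the two $\UN$-graphs genuinely forces the two a priori distinct recovered sequences to coincide. Everything else is a routine assembly: the combinatorial construction of the separating family of trees and the reconstruction of the degree sequence modulo a finite error have already been carried out in Proposition \ref{P:Las} and Lemma \ref{L:OdzyskacDS}, so no substantial obstacle remains.
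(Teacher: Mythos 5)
Your proposal is correct and follows essentially the same route as the paper: it builds the family $\{\UN(\langle\{G\}\rangle):G\in\mathcal B\}$ from Proposition \ref{P:Las}, applies Theorem \ref{T:GenerowanieK} for strong $\{\textbf{p}\}$-drawability, and uses Lemma \ref{L:OdzyskacDS} (together with the fact that isomorphic graphs have the same degree sequence) to separate isomorphism types. Your explicit remark that the recovered sequence depends only on the isomorphism type of $\UN(\langle\{G\}\rangle)$ is exactly the point the paper handles via Observation \ref{O:stopnie wierzcholkow}, so no gap remains.
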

\begin{proof}
    Let $\mc{B}$ be given by Proposition \ref{P:Las}, and let $\mc{C}=\{\UN(\langle\{G\}\rangle): G \in \mc{B}\}$. By Theorem \ref{T:GenerowanieK} every element of $\mc{C}$ is $\textbf{p}$-random. Moreover, Lemma \ref{L:OdzyskacDS} and Observation \ref{O:stopnie wierzcholkow} assures that elements of $\mc{C}$ generated by different elements of $\mc{B}$ are non-isomorphic. Therefore existence of the family $\mc{C}$ proves our claim.
\end{proof}

The above considerations suggest stating the following problem. 
\begin{prob}
    Let $\K_1, \ \K_2$ be countable families of at most countable graphs.
    Characterize when $\UN(\langle \K_1 \rangle)$ is isomorphic with $\UN(\langle\K_2\rangle)$.
\end{prob}
\begin{rem}
We may present a construction analogous to the proof of Theorem~\ref{T:ContGraphs}.  
Since every tree is a bipartite graph, we may consider its parts (which are infinite anticliques)  
and replace them with cliques.  
This modification yields another family of \(\mathfrak{c}\) pairwise non-isomorphic \(\mathbf{p}\)-random graphs.
\end{rem}

\section{A basis theorem for weakly universal graphs}\label{section A basis theorem for weakly universal graphs}

We will need a simple combinatorial lemma on finite graphs.

\begin{lemma}\label{partitionlemma}
    For any finite graph $H$ and $k\in \omega$ there exists a finite graph $\overline{H}^k$ such that for any partition $\overline{H}^k=A_0\cup \ldots A_{k-1}$, at least one part \(A_{i}\) contains a copy of \(H\).
\end{lemma}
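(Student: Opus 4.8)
The plan is to build $\overline{H}^k$ explicitly as the iterated \emph{substitution} (lexicographic) product of $H$ with itself, $k$ times, and to verify the partition property by induction on $k$ (we may assume $k\geqslant 1$; for $k=0$ take $\overline{H}^0:=H$, which has no partition into zero parts, so there is nothing to prove). Put $\overline{H}^1:=H$, and for $k\geqslant 2$ let $\overline{H}^k:=H[\overline{H}^{k-1}]$, i.e.\ the graph on vertex set $V(H)\times V(\overline{H}^{k-1})$ in which $(s,x)$ and $(t,y)$ (distinct) are adjacent exactly when either $s=t$ and $\{x,y\}\in E(\overline{H}^{k-1})$, or $s\neq t$ and $\{s,t\}\in E(H)$. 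Equivalently, $\overline{H}^k$ is made of $\#V(H)$ pairwise disjoint copies of $\overline{H}^{k-1}$ --- one \emph{block} $B_s$ for each $s\in V(H)$ --- with $B_s$ and $B_t$ completely joined precisely when $\{s,t\}\in E(H)$. This graph is finite, with $(\#V(H))^{k}$ vertices. The two features that make it work are: each block induces a copy of $\overline{H}^{k-1}$ (so induced subgraphs living inside one block are induced subgraphs of $\overline{H}^k$); and any \emph{transversal} --- a choice of one vertex $v_s\in B_s$ for every $s\in V(H)$ --- induces a copy of $H$ via $v_s\mapsto s$, no matter which representative is chosen in each block, because for $s\neq t$ one has $v_s\sim v_t \iff \{s,t\}\in E(H)$.

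For the induction, the case $k=1$ is trivial, since the only part equals $\overline{H}^{1}=H$. Assume the statement for $k$ and fix a partition $V(\overline{H}^{k+1})=A_0\cup\dots\cup A_{k}$ into $k+1$ parts; write $B_s$, $s\in V(H)$, for the blocks of $\overline{H}^{k+1}$. I would split into two complementary cases. If some block $B_s$ meets at most $k$ of the parts, then the induced partition of $\overline{H}^{k+1}\restriction_{B_s}\cong\overline{H}^{k}$ has at most $k$ parts, so by the inductive hypothesis some part contains a subset of $B_s$ inducing a copy of $H$ in $\overline{H}^{k+1}\restriction_{B_s}$, hence in $\overline{H}^{k+1}$; done. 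Otherwise every block meets every part, in particular every block meets $A_0$, so we may choose $v_s\in B_s\cap A_0$ for each $s\in V(H)$; by the transversal observation, $\{v_s:s\in V(H)\}$ induces a copy of $H$ entirely inside $A_0$. Since ``some block misses a part'' and ``every block meets every part'' are exhaustive alternatives, the induction closes.

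The genuine idea here is the construction, not the verification: a single ``blow-up'' of $H$ (replacing each vertex by an independent set of any size) does \emph{not} suffice, because once $H$ has at least two vertices one can always $k$-colour such a blow-up so that every transversal is non-monochromatic; the iteration to $k$ levels --- spending one colour per level --- is what forces the monochromatic copy. Everything else is routine: checking that $\overline{H}^k$ is a well-defined finite graph, that restrictions to blocks behave correctly with respect to induced subgraphs, and that the two cases of the inductive step are mutually exhaustive. (The statement is a small, vertex-partition instance of the induced Ramsey theorem; the substitution-product construction proves it directly, with no appeal to Ramsey's theorem for hypergraphs.)
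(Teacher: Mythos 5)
Your proof is correct, but it takes a genuinely different route from the paper's. You build $\overline{H}^k$ explicitly as the $k$-fold substitution (lexicographic) power of $H$ and run a single induction on $k$: either some block meets at most $k$ of the parts, in which case the inductive hypothesis applies inside that block (after padding the induced partition with empty parts, a harmless step worth stating), or every block meets $A_0$, in which case a transversal chosen inside $A_0$ induces a copy of $H$; both the block and transversal observations do give \emph{induced} copies, which is the notion of copy the lemma needs, since it feeds into weak universality. The paper instead argues by a double induction, on the number of vertices of $H$ and on $k$: writing $H=H_0\cup\{v\}$, it takes $\overline{H_0}^{k+1}$, lists all copies $B_0,\ldots,B_l$ of $H_0$ inside it, attaches to each $B_i$ a fresh disjoint copy $G_i$ of $\overline{H}^{k}$ wired so that $B_i$ together with any single vertex of $G_i$ induces a copy of $H$, and then observes that some part contains some $B_i$, and either that part also meets $G_i$ (done) or $G_i$ is partitioned into at most $k$ parts and the inductive hypothesis in $k$ applies. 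Both arguments are elementary and avoid any appeal to Ramsey-type theorems; yours buys a completely explicit construction of size $(\#V(H))^{k}$, only one induction parameter, and a shorter verification, while the paper's construction, though equally valid, grows much faster (through the enumeration of all copies of $H_0$) and needs the extra bookkeeping of the two-parameter induction.
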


\begin{proof}
 Let us denote the size of a graph $H$ by $n$. We start our proof by observing that the claim is obvious whenever at least one of $n,k$ equals $1$ (in fact, the case of $n=2$ is also clear, as it is enough to consider clique or anticlique of size $k+1$). 

We will prove inductively with respect to $n$ that our claim holds for every $k$. The latter will also  be proved inductively, this time with respect to $k$. So, let us assume that for some $\tilde{n}\in \omega$ and all $k$'s claim holds for the pair $(\tilde{n},k)$, and for some $\tilde{k}$, claim holds for $(\tilde{n}+1,\tilde{k})$. We will show that the claim holds for $(\tilde{n}+1, \tilde{k}+1)$ Fix an arbitrary graph $H$ of $\tilde{n}+1$ vertices and let $H=H_0\cup\{v\}$ for some $H_0,v$ with $v\notin H_0$. Note that $H_0$ has $\tilde{n}$ vertices, so by the inductive assumption $\overline{H_0}^{\tilde{k}+1}$ exists, and moreover $\overline{H}^{\tilde{k}}$ also exists. Let $B_0, \ldots, B_l$ be a list of all copies of $H_0$ within $\overline{H_0}^{\tilde{k}+1}$ and let $G_0, \ldots G_l$ be pairwise disjoint isomorphic copies of $\overline{H}^{\tilde{k}}$.

We set $\overline{H}^{\tilde{k}+1}$ as disjoint union of $\overline{H_0}^{\tilde{k}+1}$ and $G_i$'s for $0\leq i \leq l$. We need to determine the connections between these parts. We do it in such a way that for every $i\in \{0,\ldots, l\}$ and every $w\in G_i$, $\overline{H}^{\tilde{k}+1}\upharpoonright_{B_i\cup\{w\}}$ is isomorphic with $H$ via an isomorphism mapping $H_0$ onto $B_i$ and $v$ to $w$\footnote{it could be done in a more optimal way by assigning single $G_i$ to all disjoint $B_j$'s, but we do not need it.}. The remaining edges may be assigned in any way.

To check that $\overline{H}^{\tilde{k}+1}$ satisfies Theorem's assertion, consider any partition 
\[\overline{H}^{\tilde{k}+1}=A_0\cup \ldots\cup A_{\tilde{k}}.
\]
 Since those sets constitute also a partition of $\overline{H_0}^{\tilde{k}+1}$, there are some $i\leq l$ and $j_0\leq \tilde{k}$ such that $B_i\subset A_{j_0} $. If $A_{j_0}\cap G_i\neq \emptyset$, then $B_i$ with addition of a single point from $A_{j_0}\cap G_i$ is a copy of $H$ contained within $A_{j_0}$. So now consider the case $A_{j_0}\cap G_i=\emptyset$. In such a case, $\{A_j\colon j\leq \tilde{k} \wedge j\neq j_0\}$ constitutes the partition of $G_i$ into $\tilde{k}$ sets, and since $G_i$ is isomorphic with $\overline{H}^{\tilde{k}}$ one of those sets contains a copy of $H$.\black
\end{proof}

\begin{cor}\label{partitionregular}
    If $(\omega,\,E)$ is any weakly universal graph, then for any partition
    $$\omega=A_0\cup \ldots \cup A_{k-1}$$
    there is some $i<k$ such that the induced subgraph $(\omega, \, E)\restriction_{A_i}$ is weakly universal.
\end{cor}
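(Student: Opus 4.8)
The plan is to argue by contradiction, the only real ingredient being Lemma~\ref{partitionlemma}. Suppose $(\omega, E)$ is weakly universal but that, for some partition $\omega = A_0 \cup \ldots \cup A_{k-1}$, none of the induced subgraphs $(\omega,E)\restriction_{A_i}$ is weakly universal. Then for each $i<k$ there is a finite graph $H_i$ which does not embed as an induced subgraph of $(\omega,E)\restriction_{A_i}$ (if $A_i=\varnothing$, take $H_i$ to be the one-vertex graph).

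First I would merge the $H_i$ into a single finite graph $H$ that contains every $H_i$ as an induced subgraph; the isolated union $H := H_0 + H_1 + \cdots + H_{k-1}$ works, since its induced subgraph on the copy of $V(H_i)$ is exactly $H_i$. The purpose of passing to $H$ is that producing an induced copy of $H$ inside some single part $A_i$ already produces an induced copy of $H_i$ inside $A_i$, which is the contradiction we are after.

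Next I would apply Lemma~\ref{partitionlemma} to $H$ and $k$ to obtain the finite graph $\overline{H}^k$. Since $(\omega,E)$ is weakly universal, there is a set $S\subseteq\omega$ with $(\omega,E)\restriction_S$ isomorphic to $\overline{H}^k$. Transporting the partition $S = (S\cap A_0)\cup\cdots\cup(S\cap A_{k-1})$ across this isomorphism yields a partition of $\overline{H}^k$ into at most $k$ parts, so by the defining property of $\overline{H}^k$ one of them, say $S\cap A_i$, carries an induced copy of $H$. Being an induced subgraph of $(\omega,E)\restriction_S$, this copy is also an induced subgraph of $(\omega,E)\restriction_{A_i}$ (the induced-subgraph relation composes along $T\subseteq S\cap A_i\subseteq A_i$); hence $(\omega,E)\restriction_{A_i}$ contains $H$, and therefore $H_i$, as an induced subgraph, contradicting the choice of $H_i$. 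This completes the argument.

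I do not anticipate a serious obstacle. The only points requiring a little care are: (i) that "contains a copy of $H$" in Lemma~\ref{partitionlemma} is read in the induced sense, matching the definition of weak universality; (ii) that partitions of $\overline{H}^k$ into fewer than $k$ nonempty parts (which occur when some $S\cap A_i$ is empty) are still covered by the lemma; and (iii) the trivial fact that a single finite graph simultaneously containing all the $H_i$ as induced subgraphs exists, handled by the isolated union.
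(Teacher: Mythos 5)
Your argument is correct and follows essentially the same route as the paper's proof: argue by contradiction, uniformize the witnesses $H_i$ to a single finite graph $H$ (the paper simply notes each $H_i$ may be replaced by a larger graph, where you make this explicit via the isolated union), then apply Lemma~\ref{partitionlemma} and the weak universality of $(\omega,E)$ to locate a copy of $\overline{H}^k$ whose induced partition forces a copy of $H$ inside some $A_i$. The extra care you take about the induced-subgraph reading and about empty parts is fine but does not change the substance.
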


\begin{proof}
Suppose, towards a contradiction, that for each \(i<k\) there exists a finite graph \(H_{i}\) such that  
\((A_{i},\, E\restriction_{A_{i}})\) does not contain a copy of \(H_{i}\).  
Since each \(H_{i}\) may be replaced by a larger graph, we may assume without loss of generality that  
\(H_{i}=H\) for all \(i<k\), for some graph \(H\).  

By Lemma~\ref{partitionlemma}, there exists a graph \(\overline{H}\) with the property that for any partition of \(\overline{H}\) into \(k\) parts, at least one part contains a copy of \(H\).  
Consequently, \((\omega,\,E)\) cannot contain \(\overline{H}\), which contradicts its weak universality.
\end{proof}

\begin{theorem}[Basis Theorem]\label{t:BasisThm} If $G$ is weakly universal, then $G$ contains either $\UU_{\fin}$ or its complement. \end{theorem}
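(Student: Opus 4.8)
The plan is to reduce the statement to producing a copy of $\UU_{\fin}$ as an induced subgraph of $G$ or of its complement. Since the class of finite graphs is closed under complementation, $G$ is weakly universal if and only if $\overline{G}$ is, and $G$ contains $\overline{\UU_{\fin}}$ precisely when $\overline{G}$ contains $\UU_{\fin}$; hence the assertion is self-dual, and it suffices to find a copy of $\UU_{\fin}$ inside one of $G$, $\overline{G}$.

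Call a weakly universal graph $H$ \emph{sparse} if for every finite $S\subseteq V(H)$ the common non-neighbourhood $N^{c}_{H}(S):=\{v\in V(H):\{v,s\}\notin E(H)\text{ for all }s\in S\}$ is again weakly universal, and \emph{dense} if the common neighbourhood of every finite set of its vertices is weakly universal (equivalently, $\overline H$ is sparse). First I would record that a sparse graph $H$ contains $\UU_{\fin}$: fix an enumeration $(F_n)_{n<\omega}$ of all finite connected graphs in which each appears infinitely often, and recursively choose finite sets $T_n\subseteq N^{c}_{H}(T_0\cup\dots\cup T_{n-1})$ with $H\restriction_{T_n}\cong F_n$, which is possible because that common non-neighbourhood is weakly universal and hence realises $F_n$. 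The blocks $T_n$ are pairwise non-adjacent, so $H\restriction_{\bigcup_n T_n}$ is a disjoint union of finite connected graphs in which every finite connected graph occurs as a component infinitely often; by Proposition \ref{r:GIGunique} it is isomorphic to $\UU_{\fin}$. Dually a dense graph contains $\overline{\UU_{\fin}}$. So it is enough to exhibit a weakly universal induced subgraph of $G$ that is sparse or dense, or one of $\overline G$ that is sparse.

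To produce such a subgraph I would run a descent. Put $G_0=G$. Given a weakly universal $G_n$: if $G_n$ is sparse or dense, stop and build $\UU_{\fin}$ or $\overline{\UU_{\fin}}$ inside it (hence inside $G$); otherwise $G_n$ is not sparse, so fix a finite $S_n\subseteq V(G_n)$ with $N^{c}_{G_n}(S_n)$ not weakly universal, partition $V(G_n)$ into $S_n$ together with the classes $V_A:=\{v:N_{G_n}(v)\cap S_n=A\}$ for $A\subseteq S_n$, and apply Corollary \ref{partitionregular}: some part is weakly universal, and since $V_\varnothing=N^{c}_{G_n}(S_n)$ is not, we may pick a nonempty $A_n\subseteq S_n$ with $V_{A_n}$ weakly universal, and set $G_{n+1}:=G_n\restriction_{V_{A_n}}$. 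By construction every vertex of $G_{n+1}$, hence of every later $G_m$, is adjacent in $G$ to all of $A_n$; the sets $A_n$ are pairwise disjoint; and writing $D:=\bigcup_n A_n$ we get that $G\restriction_D$ is the infinite join $\bigvee_n G\restriction_{A_n}$ of finite graphs. In such a join the common neighbourhood of any finite set is cofinite in $D$, so if $G\restriction_D$ were weakly universal it would be dense and we would be done; thus $G\restriction_D$ is not weakly universal, and replacing the missing finite graph by its disjoint union with a point one finds a co-connected finite graph $M$ with $M\not\hookrightarrow G\restriction_{A_n}$ for every $n$.

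The main obstacle is ruling out this last eventuality — equivalently, showing $G$ cannot be weakly universal while some co-connected $M$ avoids every $G\restriction_{A_n}$. The intended fix is to strengthen the descent so that it is precluded: at each stage one keeps not only the accumulator $A_n$ (to which all later vertices are joined) but also peels off, using the weak universality of $G_n$, a prescribed finite block — planted either inside a common non-neighbourhood, where it is automatically non-adjacent to everything chosen later, or inside a common neighbourhood, where it is completely joined to everything chosen later — so that across the construction one of the two growing structures accumulates every finite connected (respectively co-connected) graph infinitely often, and is therefore a copy of $\UU_{\fin}$ (respectively $\overline{\UU_{\fin}}$), the only alternative being that at some finite stage $G_n$ or $\overline{G_n}$ is already sparse. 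The heart of the matter is to show, by iterating Corollary \ref{partitionregular} inside a single stage, that one such ``good'' peeling can always be performed, rather than being driven into an infinite clique or anticlique; the base case — that for a single vertex $v$ one of $N_H(v),N^{c}_H(v)$ is weakly universal — is immediate from Corollary \ref{partitionregular} applied to the $2$-partition $V(H)\setminus\{v\}=N_H(v)\sqcup N^{c}_H(v)$, and I expect the general peeling step to be a finitary elaboration of this, with Lemma \ref{partitionlemma} controlling the finite combinatorics.
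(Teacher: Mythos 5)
Your reduction is sound as far as it goes: the self-duality observation is fine, and your ``sparse $\Rightarrow$ contains $\UU_{\fin}$'' argument (planting each finite connected graph infinitely often inside successive common non-neighbourhoods and invoking Proposition \ref{r:GIGunique}) is correct -- it is essentially the block-planting step the paper also performs. But the proof has a genuine gap exactly where you acknowledge it: the descent you run never produces a sparse or dense weakly universal subgraph, and the alternative branch ends without a contradiction. All the descent guarantees is that every later vertex is adjacent to \emph{all} of $A_n$; the conclusion you extract -- that $G\restriction_D$ is a join which fails to be weakly universal, so some co-connected $M$ embeds in no $G\restriction_{A_n}$ -- is perfectly consistent with $G$ being weakly universal (for instance nothing prevents every $A_n$ from being a singleton, making $G\restriction_D$ an infinite clique). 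So the main line of argument terminates with no copy of $\UU_{\fin}$ or of its complement, and the ``strengthened descent'' of your last paragraph, in which prescribed finite blocks are peeled off into a common non-neighbourhood or a common neighbourhood of everything chosen later, is not an elaboration of what you proved but a restatement of the whole difficulty; no argument is given that such a good peeling is always available, and Lemma \ref{partitionlemma} alone does not supply it.

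For comparison, the paper resolves precisely this point with a different device: it chooses one vertex at a time, defining $f(n)\in\{0,1\}$ so that the set $U_n$ of vertices beyond $n$ whose adjacency to each $i<n$ agrees with $f(i)$ stays weakly universal (one application of Corollary \ref{partitionregular} per vertex). Then, for each $n$, Corollary \ref{partitionregular} gives that $U_n\cap f^{-1}(\{0\})$ or $U_n\cap f^{-1}(\{1\})$ is weakly universal, and since the $U_n$ are decreasing, one fixed colour class works for \emph{every} $n$ simultaneously. Planting the block $K_m$ realizing the $m$-th finite graph inside $U_n\cap f^{-1}(\{0\})$ for $n$ beyond all earlier blocks makes the blocks automatically pairwise disconnected (or, in the $f^{-1}(\{1\})$ case, completely joined), which is exactly the uniform control over all future choices that your accumulators $A_n$ fail to provide. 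If you want to salvage your approach, proving that every weakly universal graph has a weakly universal induced subgraph that is sparse or dense is essentially equivalent to this construction, so you would need to supply an argument of comparable strength rather than the finitary pigeonhole you gesture at.
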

\begin{proof}
Let \((\omega, E)\) be weakly universal.  
By iteratively applying Corollary~\ref{partitionregular}, we recursively define a function
\(f\colon \omega \longrightarrow \{0,1\}\) so that, for each \(n<\omega\), the set

    \[
U_n := 
\left\{\,k\ge n :
  \mathrel{\vcenter{\hbox{%
    $\displaystyle
      \mathop{\scalebox{1.1}{$\forall$}}_{\,\scriptstyle i<n}$
  }}}%
  \!\!\!\;\bigl(\{i,k\}\in E \iff f(i)=1\bigr)
\right\}.
\]

is weakly universal.  
The construction is straightforward.

Applying Corollary \ref{partitionregular}, we see that for each $n<\omega$, at least one of the sets

\[
U_{n}\cap f^{-1}(\{0\})
\quad\text{and}\quad
U_{n}\cap f^{-1}(\{1\})
\]

is weakly universal.  
Evidently, one of these alternatives occurs infinitely often; and since the sequence \((U_{n})_{n<\omega}\) is decreasing, it in fact occurs for every \(n<\omega\).  
Without loss of generality, assume that all sets of the form
\[
U_{n}\cap f^{-1}(\{0\})
\]

are weakly universal.

Let \(\{H_{n} : n<\omega\}\) be a list of all finite graphs, up to isomorphism.  
We recursively choose finite sets \(K_{m}\subseteq \omega\) satisfying for all \(m<\omega\):
\begin{enumerate}
  \item \(K_{m}\subseteq f^{-1}(\{0\})\),
  \item \((\omega, E)\restriction_{K_{m}} \simeq H_{m}\),
  \item $K_m$ is disjoint and disconnected from $\bigcup\limits_{i<m}K_i.$
\end{enumerate}

Suppose the sets \(K_{i}\) are chosen for all \(i<m\).  
Let \(n<\omega\) be large enough so that \(\bigcup_{i<m} K_{i} \subseteq n\).  
Since \(U_{n}\cap f^{-1}[\{0\}]\) is weakly universal, we may choose

\[
K_{m} \subseteq U_{n}\cap f^{-1}(\{0\})
\quad\text{so that}\quad
(\omega, E)\restriction_{K_{m}} \simeq H_{m}.
\]

To verify (3), observe that (1) together with the definition of \(U_{n}\) ensures

\[
\Bigl\{\,k \in \omega\setminus n : k \text{ is disconnected from } \bigcup_{i<m} K_{i}\Bigr\}
\;\supseteq\; U_{n}.
\]

This completes the recursive construction.  
By (2) and (3) we obtain

$$(\omega,\, E)\restriction_{\bigcup\limits_{m<\omega}K_m} \simeq \UU_{\fin}.$$

The other, symmetric case occurs when the sets

\[
U_{n}\cap f^{-1}(\{1\})
\]

are weakly universal.  Then the analogous construction gives us a complement of $\UU_{\fin}$.
\end{proof}
Recall that we call graph $G$ is indivisible if for every partition $A_0\cup A_1 =V(G)$, one of the graphs $G\restriction_{A_0}$ and $G\restriction_{A_1}$ contains a copy of $G$.

\begin{cor}\label{indivisible}
The graph \(\UU_{\fin}\) is indivisible.
\end{cor}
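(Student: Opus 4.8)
The plan is to obtain indivisibility of $\UU_{\fin}$ as an essentially immediate consequence of the Basis Theorem \ref{t:BasisThm} combined with Corollary \ref{partitionregular}. First I would note the preliminary fact that $\UU_{\fin}$ is itself weakly universal: any finite graph $F$ is a disjoint union of finitely many connected graphs $F_1,\dots,F_m$, and since each finite connected graph occurs infinitely often among the components of $\UU_{\fin}$, one can choose pairwise distinct components realising $F_1,\dots,F_m$; as distinct components are mutually non-adjacent, the induced subgraph on their union is exactly $F$. Hence Corollary \ref{partitionregular} is applicable to $\UU_{\fin}$ (viewed as a graph on $\omega$).

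Now fix a partition $V(\UU_{\fin})=A_0\cup A_1$. By Corollary \ref{partitionregular} with $k=2$, at least one of the induced subgraphs $\UU_{\fin}\restriction_{A_0}$, $\UU_{\fin}\restriction_{A_1}$ is again weakly universal; without loss of generality say $\UU_{\fin}\restriction_{A_0}$ is. By the Basis Theorem, $\UU_{\fin}\restriction_{A_0}$ contains a copy either of $\UU_{\fin}$ or of the complement of $\UU_{\fin}$. So everything reduces to ruling out the second alternative.

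The one structural observation that does the work is that $\UU_{\fin}$ has no infinite connected component, and that this property passes to every induced subgraph: a vertex set that is connected in $\UU_{\fin}\restriction_{A}$ is connected in $\UU_{\fin}$, hence contained in a single (finite) component of $\UU_{\fin}$. On the other hand, the complement of $\UU_{\fin}$ is connected and infinite — any two vertices in distinct components of $\UU_{\fin}$ are adjacent in the complement, while any two vertices of the same component have a common neighbour in any other component, so the complement has diameter at most $2$. Consequently no induced subgraph of $\UU_{\fin}$ can contain a copy of the complement of $\UU_{\fin}$, and the Basis Theorem therefore forces $\UU_{\fin}\restriction_{A_0}$ to contain a copy of $\UU_{\fin}$; this is precisely the assertion that $\UU_{\fin}$ is indivisible. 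I do not anticipate a genuine obstacle: the only points requiring a line of care are verifying weak universality of $\UU_{\fin}$ (so that Corollary \ref{partitionregular} applies) and checking that "containing a copy of the complement of $\UU_{\fin}$" really does produce an infinite connected induced subgraph, contradicting the finiteness of all components of $\UU_{\fin}$.
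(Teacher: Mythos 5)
Your proof is correct and follows essentially the same route as the paper: apply Corollary \ref{partitionregular} to the two-part partition and then Theorem \ref{t:BasisThm} to the weakly universal part. You are in fact more careful than the paper's two-line argument, since you also verify that $\UU_{\fin}$ is weakly universal and explicitly rule out the alternative that the part contains the complement of $\UU_{\fin}$ (every induced subgraph of $\UU_{\fin}$ has only finite connected components, while that complement is infinite and connected), a step the paper leaves implicit.
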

\begin{proof}
    Let $G$ be a copy of \(\UU_{\fin}\), and consider a partition of $V(G)$ into sets $A_0$ and $A_1$. By Corollary \ref{partitionregular}, one of the graphs $G\restriction_{A_0}$, and $G\restriction_{A_1}$ is weakly universal. By Theorem \ref{t:BasisThm}, it must therefore contain a copy of \(\UU_{\fin}\) itself.
\end{proof}

\begin{cor}
    Let $\lambda=\mu_{\mathbf{p}}$ for $\mathbf{p} \equiv \frac{1}{2}$.  Then for $\lambda$-almost all graphs $(\omega,\, E)$ it is true that whenever
    $$\sum\limits_{n\in A}\frac{1}{n}=\infty$$
   the  induced subgraph $(\omega,\, E) \restriction_A$ contains a copy of $\UU_{\fin}$ or its complement.
\end{cor}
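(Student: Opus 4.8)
The plan is to deduce the corollary from the Basis Theorem (Theorem~\ref{t:BasisThm}): it suffices to show that for $\lambda$-almost every $G=(\omega,E)$, every $A\subseteq\omega$ with $\sum_{n\in A}\frac1n=\infty$ (call such $A$ \emph{thick}) has $G\restriction_A$ weakly universal; the Basis Theorem then produces a copy of $\UU_{\fin}$ or of its complement inside $G\restriction_A$. Since there are only countably many finite graphs up to isomorphism, and weak universality of $G\restriction_A$ just means that $G\restriction_A$ contains an induced copy of every finite graph $H$, it is enough to prove, for each fixed finite graph $H$, that the set $\mathcal B_H$ of all $G$ for which some thick $A$ has $G\restriction_A$ free of induced copies of $H$ is $\lambda$-null; then one intersects over all $H$.

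Write $I_k=[2^k,2^{k+1})$, so $\#I_k=2^k$ and $2^{-k-1}<\frac1n\le 2^{-k}$ for $n\in I_k$. First I record a combinatorial fact: if $A$ is thick then $\#(A\cap I_k)>2^k/k^2$ for infinitely many $k$. Indeed $\sum_{n\in A}\frac1n=\infty$ forces $\sum_k\#(A\cap I_k)2^{-k}=\infty$, and if $\#(A\cap I_k)\le 2^k/k^2$ held for all large $k$ the tail of this series would be dominated by $\sum_k k^{-2}<\infty$. The gain from passing to the blocks $I_k$ is that, although there are uncountably many thick $A$, the witnessing set $A\cap I_k$ is a subset of the \emph{fixed finite} set $I_k$; hence, putting $t_k=\lceil 2^k/k^2\rceil$, if $G\in\mathcal B_H$ then there are infinitely many $k$ admitting some $S\subseteq I_k$ with $\#S=t_k$ and $G\restriction_S$ having no induced copy of $H$. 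Thus $\mathcal B_H$ is contained in $\limsup_k\bigcup\{\,\{G:G\restriction_S\text{ omits }H\}:S\subseteq I_k,\ \#S=t_k\,\}$, which is a countable union of events.

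Next comes the probabilistic estimate. For a fixed $t$-element set $S$, $G\restriction_S$ is uniform over the $2^{\binom t2}$ graphs on $S$, so $\lambda(\{G:G\restriction_S\text{ omits }H\})=|\mathcal P_t|\,2^{-\binom t2}$, where $\mathcal P_t$ is the family of graphs on $S$ with no induced copy of $H$. Since ``no induced copy of $H$'' is a proper hereditary graph property, the Alekseev--Bollob\'as--Thomason estimate on the number of graphs in a hereditary property gives $|\mathcal P_t|\le 2^{(1-\delta_H)\binom t2}$ for some $\delta_H>0$ and all large $t$, so this probability is at most $2^{-c_H t^2}$ for a constant $c_H>0$ depending only on $H$. (For concrete $H$ such as a clique, an anticlique, or $P_3$ this can be shown by hand.) Consequently $\sum_k\binom{2^k}{t_k}2^{-c_H t_k^2}\le\sum_k 2^{k t_k}2^{-c_H t_k^2}\le\sum_k 2^{\,2^k/k+k-c_H 4^k/k^4}<\infty$, since the quadratic term $c_H 4^k/k^4$ eventually swamps $2^k/k$. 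By the first Borel--Cantelli lemma $\lambda(\mathcal B_H)=0$. Unwinding: for $\lambda$-a.e. $G$ there is $N(G)$ such that for every $k\ge N(G)$ and every $t_k$-element $S\subseteq I_k$ the graph $G\restriction_S$ contains an induced copy of $H$; given any thick $A$, choose such a $k$ with $\#(A\cap I_k)\ge t_k$ (possible by the combinatorial fact) and $S\subseteq A\cap I_k$ of size $t_k$, so $G\restriction_A\supseteq G\restriction_S$ contains an induced $H$. Intersecting over all finite $H$ and invoking Theorem~\ref{t:BasisThm} finishes the proof; as a sanity check, $\lambda$-a.e.\ $G$ is a copy of the Rado graph, and the result records that ``sufficiently dense'' coordinate subsets of it remain universal for finite graphs.

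The main obstacle is the probabilistic estimate: one genuinely needs that $G(t,\tfrac12)$ omits a fixed induced subgraph $H$ with probability $2^{-\Omega(t^2)}$, that is, \emph{quadratic} in the exponent. A merely exponential bound $2^{-\Omega(t)}$ — which is all that a disjoint-copies count or a greedy sequential embedding delivers — would be useless, because the number $\binom{2^k}{t_k}$ of candidate subsets of a block $I_k$ is already $2^{\Theta(2^k/k)}$, far larger than any $2^{-\Theta(t_k)}$. This quadratic decay is exactly the content of the structural theory of hereditary graph properties, so the cleanest route is to quote it. The only other point requiring care — reducing the ``for all thick $A$'' quantifier to a single convergent Borel--Cantelli series — is handled by the blockwise reduction above, where $A\cap I_k$ ranges over subsets of the finite set $I_k$.
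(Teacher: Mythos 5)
Your proposal is correct, and it reduces to Theorem~\ref{t:BasisThm} exactly as the paper does; the difference lies entirely in how the intermediate fact (``for $\lambda$-almost every $G$, the restriction $G\restriction_A$ is weakly universal for every $A$ with $\sum_{n\in A}\frac1n=\infty$'') is obtained. The paper gets this in one line by citing Theorem~1.2 of \cite{brian2018subsets} and then invokes the Basis Theorem, whereas you prove the intermediate fact from scratch: the dyadic-block observation that a thick $A$ meets infinitely many blocks $I_k=[2^k,2^{k+1})$ in at least $t_k=\lceil 2^k/k^2\rceil$ points correctly converts the uncountable quantifier over thick sets into a countable union over finite configurations, the identity $\lambda(\{G:G\restriction_S\text{ omits }H\})=|\mathcal P_t|2^{-\binom t2}$ is right since the edges inside a fixed $t$-set are i.i.d.\ fair bits, and the Alekseev--Bollob\'as--Thomason bound $|\mathcal P_t|\le 2^{(1-\delta_H)\binom t2}$ does give the $2^{-c_Ht^2}$ decay that makes $\sum_k\binom{2^k}{t_k}2^{-c_Ht_k^2}$ converge, so the first Borel--Cantelli lemma and the intersection over the countably many finite $H$ finish the job (the finitely many $k$ for which $t_k$ is below the ABT threshold are harmless). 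What each route buys: the paper's proof is shorter but rests on an external result tailored to exactly this statement; yours is self-contained modulo the hereditary-property counting theorem, which is itself a heavy import of comparable weight, though, as you note, for specific $H$ it can be replaced by hand computations. One small remark: your union-bound estimate $\binom{2^k}{t_k}\le 2^{kt_k}$ is crude --- in fact $\binom{2^k}{t_k}\le (ek^2)^{t_k}=2^{O(t_k\log k)}$ --- so full quadratic decay is more than is strictly needed (any bound of the form $2^{-\omega(t\log t)}$ would already do), but your point stands that a merely exponential bound $2^{-\Omega(t)}$ would not suffice, and the quadratic bound you quote is the one readily available in the literature.
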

\begin{proof}
By \cite[Theorem 1.2]{brian2018subsets}, $\lambda$-almost every graph $(\omega,\, E)$ has the property that $(\omega,\, E)\restriction_A$ is weakly universal for every $A$ as above and Theorem \ref{t:BasisThm} concludes our claim. 
\end{proof}

\section{Landscape on drawable graphs}\label{section Landscape on drawable graphs}

The aim of this section is to summarize results obtained so far, make some guesses about things that are yet to be done, and ways of achieving it. 

It seems that the crucial question is "Which graphs are drawable?". In order to work on it, we had to introduce a number of classes of sequences that might be used for a drawing. It seems that sequences $\textbf{p}\in(0,1)^\omega$ are the right ones to consider, since using $0,1$ takes the randomness away. Proposition \ref{p:Non-BC}, together with reasonings from \cite{coregliano2024random} motivate the following conjecture
\begin{con}
    Let $\textbf{p}\in(0,1)^\omega$ be not Borel--Cantelli. Then $\Sigma_{\mathbf{p}}=\varnothing$.
\end{con}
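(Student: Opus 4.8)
The plan is to show that if $\mathbf{p}\in(0,1)^\omega$ is not Borel--Cantelli, then for \emph{every} bijection $\sigma\colon\omega\to[\omega]^2$ the product measure $\mu_{\sigma(\mathbf p)}=\prod_{e}p_{\sigma^{-1}(e)}$ is concentrated on no set $\iso(G_0)$. This suffices, because a graph in $\Sigma_{\mathbf p}$ is in particular weakly drawable, hence weakly universal by the corollary to Theorem~\ref{t:CharGrafLos1}, hence contains a complete subgraph on $m$ vertices for every $m$, i.e.\ has infinite clique number. By passing to complements (which replaces each $p_e$ by $1-p_e$, and $G_0$ by $\overline{G_0}$) we may assume $\sum_{n}p_n^{k}<\infty$ for some integer $k\ge1$. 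So fix $\sigma$, put $q_e:=p_{\sigma^{-1}(e)}$, so that $\sum_{e\in[\omega]^2}q_e^{k}<\infty$, let $G$ be the random graph with law $\mu_{\mathbf q}=\prod_e q_e$, and let $X_m$ be the number of complete $m$-vertex subgraphs of $G$. The heart of the matter is to prove $\mathbb E[X_m]\to0$ as $m\to\infty$, \emph{uniformly in $\sigma$}.

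The first step is to bound $\mathbb E[X_{k+1}]$. Writing $g(x,y):=q_{\{x,y\}}$ for $x\ne y$ and $g(x,x):=0$, one has $\mathbb E[X_{k+1}]=\tfrac1{(k+1)!}\sum_{v_1,\dots,v_{k+1}}\prod_{i<j}g(v_i,v_j)$, and the sum on the right is exactly the kind of quantity controlled by the Loomis--Whitney/Finner inequality for the hypergraph $K_{k+1}$: every vertex of $K_{k+1}$ lies in precisely $k$ of its edges, so assigning the exponent $1/k$ to each edge-factor is admissible and gives
\[
\sum_{v_1,\dots,v_{k+1}}\ \prod_{1\le i<j\le k+1}g(v_i,v_j)\ \le\ \Bigl(\,\sum_{x,y}g(x,y)^{k}\Bigr)^{\binom{k+1}{2}/k}=\Bigl(2\sum_{e}q_e^{k}\Bigr)^{(k+1)/2}<\infty,
\]
so $\mathbb E[X_{k+1}]<\infty$. (For $k+1=3$ this is the one-line iterated Cauchy--Schwarz bound $\sum_{a,b,c}g(a,b)g(a,c)g(b,c)\le\bigl(\sum_{x,y}g(x,y)^2\bigr)^{3/2}$, and the general case follows by the same iterated Cauchy--Schwarz ("box") argument, so no external black box is strictly necessary.)

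Next, peeling off the largest vertex of a complete $(m{+}1)$-vertex subgraph gives $\mathbb E[X_{m+1}]=\sum_{s_1<\dots<s_m}\bigl(\prod_{i<j\le m}q_{\{s_i,s_j\}}\bigr)\sum_{v>s_m}\prod_{i\le m}q_{\{s_i,v\}}$, and by the arithmetic--geometric mean inequality $\prod_{i\le m}q_{\{s_i,v\}}\le\frac1m\sum_{i\le m}q_{\{s_i,v\}}^{\,m}$, so the inner sum is at most $C_m:=\sum_e q_e^{\,m}$; thus $\mathbb E[X_{m+1}]\le C_m\,\mathbb E[X_m]$. Since $q_e\in(0,1)$ and $q_e^{\,m}\le q_e^{\,k}\in\ell^1$ for $m\ge k$, dominated convergence yields $C_m\to0$, and combined with the previous step this forces $\mathbb E[X_m]\to0$. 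Now Markov's inequality gives $\mu_{\mathbf q}(G\supseteq K_m)\le\mathbb E[X_m]\to0$, hence $\mu_{\mathbf q}$ assigns probability $0$ to the decreasing event "$G$ contains $K_m$ for all $m$", i.e.\ almost surely the clique number of $G$ is finite. But if $\mu_{\mathbf q}$ were concentrated on $\iso(G_0)$ with $G_0$ weakly universal, the clique number of $G$ would almost surely equal $\omega(G_0)=\infty$, a contradiction; so $\Sigma_{\mathbf p}=\varnothing$.

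The step I expect to carry the real content — and hence to be the main obstacle — is the first-moment bound on $X_{k+1}$: one must see that the failure of the Borel--Cantelli condition \emph{at a single exponent $k$} already caps, independently of how the probabilities are arranged among the pairs, the expected number of $(k{+}1)$-cliques, and this is precisely what the Loomis--Whitney/Finner mechanism supplies. Everything downstream — the peeling recursion, the decay $C_m\to0$, and the passage from "finite clique number almost surely" to "not concentrated, hence not strongly drawable" — is then routine. The one point requiring care is to match the exact hypotheses of the version of Finner's inequality invoked (counting measure, edge-weights $\theta_e=1/k$ with $\sum_{e\ni v}\theta_e\le1$); replacing the citation by the explicit iterated-Cauchy--Schwarz proof of $\sum_{v_1,\dots,v_n}\prod_{i<j}g(v_i,v_j)\le\|g\|_{n-1}^{\binom n2}$ keeps the whole argument self-contained.
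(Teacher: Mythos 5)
This statement is not actually proved in the paper: it is posed as an open conjecture in Section~\ref{section Landscape on drawable graphs}, and only the case $k=1$ (that is, $\sum_n p_n<\infty$ or $\sum_n(1-p_n)<\infty$) is established there, as Proposition~\ref{p:Non-BC}. So there is no paper proof to compare with; your argument, if correct, settles the conjecture, and after checking each step I do not find a gap. The reduction is legitimate: any $G_0\in\Sigma_{\mathbf p}$ is weakly drawable, hence weakly universal by the corollary to Theorem~\ref{t:CharGrafLos1}, so it contains $K_m$ for every $m$; and complementation pushes $\mu_{\sigma(\mathbf p)}$ to the measure with probabilities $1-p_n$ while carrying $\iso(G_0)$ onto $\iso(\overline{G_0})$, so one may indeed assume $\sum_n p_n^k<\infty$ for some $k$. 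The heart of the matter, the bound $\sum_{v_1,\dots,v_{k+1}}\prod_{i<j}g(v_i,v_j)\le\bigl(\sum_{x,y}g(x,y)^k\bigr)^{(k+1)/2}$, is a correct instance of the generalized H\"older (Finner/AGM) inequality for $K_{k+1}$ with uniform weights $1/k$; one caveat is that for counting measure the admissibility condition is the fractional \emph{cover} condition $\sum_{e\ni v}\theta_e\ge1$ rather than the ``$\le1$'' you wrote (which is the probability-space version), but since here the sums equal $1$ exactly this is harmless, and the self-contained induction you sketch does work (peel one vertex, apply $n$-fold H\"older in $\ell^n$, absorb the resulting weights $u(v)=\|g(v,\cdot)\|_n$ into the kernel as $(u(x)u(y))^{1/(n-1)}$, apply the inductive bound and one more H\"older). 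The peeling recursion $\mathbb{E}[X_{m+1}]\le\bigl(\sum_e q_e^m\bigr)\mathbb{E}[X_m]$ via AM--GM, the decay $\sum_e q_e^m\to0$ by dominated convergence, Markov's inequality, the monotonicity of the events, and the final contradiction (which only needs outer measure of $\iso(G_0)$, so measurability is not an issue) are all sound, and the first-moment bound depends only on $\sum_n p_n^k$, hence is uniform in the bijection $\sigma$ as you claim.

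Two remarks on presentation. First, the only genuinely new ingredient beyond what the paper already has is precisely the uniform cap on the expected number of $(k+1)$-cliques; it simultaneously re-proves the ``only if'' direction of the cited theorem of Coregliano et al.\ for the Rado graph, and it would be worth stating the clique-count inequality as a separate lemma with either a precise citation of Finner's theorem (cover condition included) or the explicit induction. Second, you could skip the complementation step entirely: weak universality also gives infinite independence number, so in the case $\sum_n(1-p_n)^k<\infty$ you can run the identical estimate on independent sets (cliques of the complementary random graph, whose edge probabilities are $1-q_e$); either way the argument is symmetric.
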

If the above conjecture is true, then Proposition \ref{p:Ap=Sinfty} and Theorem \ref{t:RezWielePerm} provide an insight into permuting given sequence (although it would be desired to check if Theorem \ref{t:RezWielePerm} holds with other notions on almost-all, like e.g. co-Haar null). 

Theorem \ref{t:CharGrafLos1} provides full description of graphs drawable with sequences that may be split into two subsequences, one of which being quickly convergent to $0$ and the other one being quickly convergent to $1$. 

We have another conjecture allowing to organize the drawable graphs. We will abbreviate it as $\CIT$. 
\begin{con}[of invariant set]\label{con:CIT} For any Borel-Cantelli sequence $\textbf{p}$ and $\textbf{p}$-random $G=(\omega,E_G)$, there exists infinite set $A\subseteq[\omega]^2$ such that for any $B\subseteq A$ the graph 
    \[
    \left(\omega, \left(E_G\setminus A\right)\cup B\right)
    \]
    is isomorphic with $G$.
\end{con}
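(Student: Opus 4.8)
The statement is a conjecture, so what follows is a proposed line of attack rather than a complete argument. Unwinding the definitions, ``$\mathbf p$-random $G$'' means $\mu_{\sigma(\mathbf p)}(\iso(G))=1$ for some bijection $\sigma\colon\omega\to[\omega]^2$; put $\mathbf q=\sigma(\mathbf p)$, a Borel--Cantelli family on $[\omega]^2$. Two reductions are available at the outset. First, $1-\mathbf p$ is Borel--Cantelli together with $\mathbf p$, the complement $\overline G$ is $(1-\mathbf p)$-random, and complementation turns an admissible set $A$ for $\overline G$ into an admissible set $A$ for $G$ (replace each $B\subseteq A$ by $A\setminus B$), so we may pass to the complement freely. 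Second, a strongly $\{\mathbf p\}$-drawable graph is weakly drawable, hence weakly universal, so by the Basis Theorem (Theorem~\ref{t:BasisThm}) $G$ --- or $\overline G$ after complementing --- contains an induced copy of $\UU_{\fin}$; and by Theorem~\ref{tail-sets}, $G$ is invariant under finite modifications. The last fact is exactly what makes flipping a \emph{single} pair harmless, and the entire difficulty is to arrange $A$ so that \emph{infinitely many} simultaneous flips stay harmless.

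The plan is to take $A$ to be the set of internal pairs of a sequence $(W_i)_{i<\omega}$ of pairwise disjoint finite vertex sets sitting ``freely'' inside $G$, the shape of the $W_i$ being dictated by $G$ itself. Two model cases guide this. If $G$ is Rado-like (say $G=\RR$), choose an infinite, co-infinite $S\subseteq\omega$ whose complement meets the set of realisers of every finite extension type of $\RR$, and set $A=[S]^2$; since flipping pairs inside $[S]^2$ changes neither the edges among $\omega\setminus S$ nor those between $S$ and $\omega\setminus S$, each graph $(\omega,(E_G\setminus A)\cup B)$ still has the extension property, with witnesses taken from $\omega\setminus S$, hence is isomorphic to $\RR$. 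If instead $G$ is $\UU_{\fin}$-like (say $G=\UU_{\fin}$, or $G=\UN(\langle\mathcal K\rangle)$, or the graph of Example~\ref{ex:SusGIG}), then $G$ has infinitely many isolated vertices and infinitely many components that are single edges; fix a matching $A=\{\{a_i,b_i\}:i<\omega\}$ on pairwise distinct isolated vertices, leaving infinitely many isolated vertices outside $A$. For any $B\subseteq A$ the graph $(\omega,E_G\cup B)$ merely replaces some of these pairs by single edges, creates no infinite component, and still has infinitely many components of each finite isomorphism type, so Proposition~\ref{r:GIGunique} (respectively the analogous uniqueness characterisation for $\UN(\langle\mathcal K\rangle)$, respectively the characterisation underlying Example~\ref{ex:SusGIG}, for which one additionally picks the $a_i,b_i$ among isolated vertices not joined to the distinguished vertex $\pi$) identifies it with $G$.

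For an arbitrary $\mathbf p$-random $G$ one would like to interpolate between these two cases: split $\omega$ into a ``Rado-like'' part and a ``$\UU_{\fin}$-like'' part and superimpose the two constructions. A possible uniform route runs through the drawing: reserve an infinite, combinatorially sparse set $A\subseteq[\omega]^2$ (a matching on a fast-growing vertex set, or $\bigsqcup_i[W_i]^2$ with wide gaps between the $W_i$) so that $\mathbf q\restriction_{[\omega]^2\setminus A}$ is still Borel--Cantelli --- an elaboration of Lemma~\ref{LemM} --- and then, using Lemma~\ref{LemM} and a Fubini argument over the coordinates outside $A$, try to show that for $\mu_{\mathbf q}$-almost every $G_0$ one can build an isomorphism $G_0\cong(\omega,(E_{G_0}\setminus A)\cup B)$ for \emph{every} $B\subseteq A$ by a back-and-forth that routes around the vertices touched by $A$.

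The genuine obstacle, and the reason the statement is left as a conjecture, is twofold. First, the correct shape of $A$ depends on the structure of $G$, and no classification of $\mathbf p$-random graphs is available, so Conjecture~\ref{con:CIT} is entangled with the structural questions of the preceding sections. Second, even granting such a classification, upgrading a Fubini-type conclusion of the form ``the fibre over $G_0\restriction_{[\omega]^2\setminus A}$ meets $\iso(G)$ in measure one'' to ``the whole fibre lies in $\iso(G)$'' requires the set of admissible $A$-modifications to be not merely a full-measure tail-set inside $2^A$ but all of $2^A$; this fails for a generic $A$ (e.g. the law-of-large-numbers set is a proper full-measure tail-set), so $A$ must be engineered combinatorially sparse enough that any modification within it is absorbed by an isomorphism --- uniformly over all $\mathbf p$-random $G$. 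That last step is where I expect the main effort to lie.
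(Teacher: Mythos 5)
This statement is left as an open conjecture in the paper --- no proof is given, only the remark that all known examples of drawable graphs satisfy $\CIT$, with brief sketches for the Rado graph (witnesses outside the invariant set), $\UU_{\fin}$ (edges of infinitely many two-point components), the graph of Example~\ref{ex:SusGIG}, and $\UN(\mc{K})$ (finite edge sets inside infinitely many copies of a fixed $G\in\mc{K}$). Your treatment matches this: you correctly present it as a line of attack rather than a proof, and your verifications of the model cases coincide with the paper's sketches up to harmless variants (e.g.\ a matching on isolated vertices for $\UU_{\fin}$ in place of its two-point components), while your further reductions (passing to complements, the Basis Theorem) and the Fubini/Lemma~\ref{LemM} strategy, together with the obstacles you name, go beyond what the paper records but do not close the conjecture --- which is consistent with its status in the paper.
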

    Since we do not know if the above conjecture holds, we introduce the following definition.
    \begin{df}
        We say that a graph $G$ satisfies $\CIT$, if there exists set $A$ as in the Conjecture \ref{con:CIT}.
    \end{df}
    Although the truth of $\CIT$ remains uncertain, it yields the following appealing outcome.
    \begin{theorem}
        If $\CIT$ holds, then every weakly $\PP$-drawable $G$ is strongly $\Acc_{\{0,1\}}$ drawable, for $\PP$ equal to the set of all Borel-Cantelli sequences.
    \end{theorem}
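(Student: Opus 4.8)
The plan is as follows. Assume $G$ is weakly $\PP$-drawable with $\PP$ the set of all Borel--Cantelli sequences; since $\PP\subseteq(0,1)^{[\omega]^2}$, $G$ is in particular weakly $(0,1)^{[\omega]^2}$-drawable, so by Theorem~\ref{t:CharGrafLos1} it is \emph{invariant under finite modifications}. Fix a concrete copy $(\omega,E_G)$ of $G$ and, using $\CIT$, fix an infinite set $A\subseteq[\omega]^2$ such that $(\omega,(E_G\setminus A)\cup B)\cong G$ for every $B\subseteq A$. The idea of the proof: on the pairs outside $A$ we reproduce $E_G$ (up to a finite error, which is harmless by invariance under finite modifications), using a fast-to-$1$ subsequence of the given probabilities on the edges of $G$ and a fast-to-$0$ subsequence on the non-edges; on the pairs inside $A$, where nothing needs to be controlled, we dump all the remaining probability values.

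First I would check that both $E_G\setminus A=E_G\cap([\omega]^2\setminus A)$ and $([\omega]^2\setminus A)\setminus E_G$ are infinite, so that $[\omega]^2$ decomposes into three countably infinite pieces $A$, $E_G\setminus A$, $([\omega]^2\setminus A)\setminus E_G$. If $E_G\setminus A$ were finite, then $G\cong(\omega,E_G\setminus A)$ (take $B=\varnothing$) would have a finite edge set, while $G\cong(\omega,(E_G\setminus A)\cup A)=(\omega,E_G\cup A)$ has edge set containing the infinite set $A$; dually, if $([\omega]^2\setminus A)\setminus E_G$ were finite then $G\cong(\omega,E_G\cup A)$ would have cofinite edge set, while $G\cong(\omega,E_G\setminus A)$ has edge set inside $[\omega]^2\setminus A$, whose complement $A$ is infinite — either way a contradiction.

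Next, fix an arbitrary $\mathbf q=(q_n)_{n\in\omega}\in\Acc_{\{0,1\}}$. Since $0$ and $1$ are accumulation points of $\mathbf q$, I would pick disjoint infinite sets $I_0,I_1\subseteq\omega$ with $\sum_{n\in I_0}q_n<\infty$ and $\sum_{n\in I_1}(1-q_n)<\infty$ (say $I_0=\{n_k\}$ with $q_{n_k}<2^{-k}$ and $I_1=\{m_k\}$ with $1-q_{m_k}<2^{-k}$), chosen sparsely enough that $I_2:=\omega\setminus(I_0\cup I_1)$ is also infinite. By the previous paragraph I may fix a bijection $\tau\colon\omega\to[\omega]^2$ with $\tau[I_1]=E_G\setminus A$, $\tau[I_0]=([\omega]^2\setminus A)\setminus E_G$, and $\tau[I_2]=A$. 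Let $\mathbf G$ be the random graph with law $\prod_{n\in\omega}q_{\tau(n)}$. By the first Borel--Cantelli lemma, with probability one only finitely many pairs of $E_G\setminus A$ fail to be edges of $\mathbf G$, and only finitely many pairs of $([\omega]^2\setminus A)\setminus E_G$ are edges of $\mathbf G$; so almost surely $\mathbf G=\bigl((E_G\setminus A)\cup B\bigr)\triangle F$ where $B=\mathbf G\cap A\subseteq A$ and $F\subseteq[\omega]^2\setminus A$ is finite (using $((E_G\setminus A)\triangle F)\cup B=((E_G\setminus A)\cup B)\triangle F$ since $F\cap A=\varnothing$). By the choice of $A$ via $\CIT$, $(\omega,(E_G\setminus A)\cup B)\cong G$, and since $G$ is invariant under finite modifications, $\mathbf G\cong G$. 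Hence $\mu_{\tau(\mathbf q)}(\iso(G))=1$; as $\mathbf q\in\Acc_{\{0,1\}}$ was arbitrary, $G$ is strongly $\Acc_{\{0,1\}}$-drawable.

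The main obstacle is conceptual rather than technical: one has to see that the two obstructions to running the naive Borel--Cantelli construction directly are both removed for free. The first — that forcing a prescribed edge pattern by Borel--Cantelli only succeeds up to a finite error — disappears because weakly drawable graphs are invariant under finite modifications (Theorem~\ref{t:CharGrafLos1}). The second — that an arbitrary $\mathbf q\in\Acc_{\{0,1\}}$ need not split into a summable-to-$0$ part and a summable-from-$1$ part exhausting all of $[\omega]^2$ (it may, e.g., have most of its mass accumulating at $1/2$) — is precisely what $\CIT$ resolves, by furnishing an infinite set $A$ of ``don't care'' pairs onto which the leftover probabilities $\{q_n:n\in I_2\}$ can be discarded. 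Everything else is bookkeeping: the verification that the three relevant subsets of $[\omega]^2$ are infinite, and the elementary set-algebra identity above.
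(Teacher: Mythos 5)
Your proposal is correct and follows essentially the same route as the paper: invariance under finite modifications from Theorem~\ref{t:CharGrafLos1}, the $\CIT$-set $A$ of ``don't care'' pairs, a three-way split of the given $\Acc_{\{0,1\}}$-sequence into a summable part (placed on non-edges outside $A$), a co-summable part (placed on edges outside $A$), and the leftover (dumped on $A$), followed by the first Borel--Cantelli lemma. The only difference is cosmetic: you spell out why there are infinitely many edges and non-edges outside $A$ and the finite-modification bookkeeping, which the paper merely asserts.
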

    \begin{proof}
        We proceed similarly to the proof of Theorem \ref{t:CharGrafLos1}. Since $G$ is weakly drawable, it is invariant with respect to modyfying finitely many edges. Moreover, by $\CIT$ there is an infinite $A\subseteq [\omega]^2$ such that any modyfication of $G$ on $A$ is isomorphic with $G$. Hence we proceed in the following way - take any sequence $\textbf{p}\in\Acc_{\{0,1\}}$.  We may divide $\omega$ into three infinite sets $\Delta,\Gamma, \Lambda$ such that $\sum\limits_{n\in\Delta} p_n<\infty$ and $\sum\limits_{n\in\Gamma} 1-p_n<\infty$. We also note that $G$ posses infinitely many edges and non-edges outside of $A$. Therefore we may assign probabilities indexed by $\Delta$ to non-edges outside $A$, those indexed by $\Gamma$ to edges outside $A$, and those labeled by $\Lambda$ to elements of $A$. By the first Borel--Cantelli lemma resulting graph will differ from $G$ on only finitely many edges outside $A$ with probability $1$, hence it will be isomorphic with $G$.  
    \end{proof}
It is worth to mention that all known examples of drawable graphs satisfy $\CIT $. Case of the Rado graph is quite clear, since the Rado graph is characterized by the property $\star$ claiming that for any pair $(A,B)$ of disjoint finite sets of vertices, there exists a vertex $v$ (called witness) connected to all elements of $A$ and to none element of $B$. Since any pair $(A,B)$ has infinitely many witnesses for the property $\star$ we may inductively pick irrelevant edges while ensuring that each pair $(A,B)$ has some witness.

In the case of $\UU_{\fin}$, one may pick eg. infinitely many $2$-points components of the graph, divide them into two parts and set $A$ to be one of those parts. Case of the graph presented in Example \ref{ex:SusGIG} is similar - we fix any finite graph $S$ and way of connecting it with the extra point $\pi$. Since this pattern appears infinitely many times, we may pick infinitely many instances of $S$ and allow any changes of its edges with leaving infinitely many copies of $S$ unchanged. 

In the case of $\UN(\mc{K})$, we fix any $G \in \mc{K}$ and consider its appearances $K_n$ within $\UN(\mc{K})$. We fix any finite set of edges $A_n$ within $K_n$, and then any modification of edges on the set $\bigcup\limits_{n\in\omega} A_{2n}$ yields a graph isomorphic with $\UN(\mc{K})$, since it still contains infinitely many copies of each element of $\mc{K}$, and each component of modified graph is isomorphic with an element of $\mc{K}$.

Note also that if a sequence is isolated from both $0$ and $1$, then the drawing with probability one yields the Rado graph. So, assuming both our conjectures holds, the landscape would look as follows for $\textbf{p}\in(0,1)^\omega$:
\begin{itemize}
    \item If $\textbf{p}$ is not Borel--Cantelli, then there are no strongly $\textbf{p}$-drawable graphs.
    \item If $\textbf{p}$ has both $0$ and $1$ as accumulation points, then a graph is strongly $\{\textbf{p}\}$-drawable if and only if it is invariant for finite modifications.
    \item The case $\textbf{p}\in \on{BC_{M0}}\cup\on{BC_{M1}}$ seems the most interesting. If $\CIT$ holds for $\on{BC_{M0}}\cup\on{BC_{M1}}$, then one may restrict attention to $\bc_1\cup \bc_0$ and then by a symmetry argument to $\bc_0$, but it seems crucial for the theory to provide a characterization for that situation;
    \item If $\textbf{p}\in\on{Sep}$ then it is Borel--Cantelli and the Rado graph is only drawable one. 
\end{itemize}
So, we conclude with problem that looks most interesting
\begin{prob}
    Characterize $\bc_0$-drawable graphs.
\end{prob}

\begin{que}
    Is it true that every weakly drawable graph is weakly drawable by a Borel--Cantelli sequence?
\end{que}

\section{APPENDIX -- product measures on the Cantor space}\label{section APPENDIX}

Probabilistic product measures on the Cantor space plays crucial role in this paper. Therefore we decided to include few standard facts about such measures. We start with proofs of lemmas stated in the Section \ref{section General facts about drawable graphs}.

\begin{lemma*}
    Let $A\s 2^\omega$ and $\textbf{p}\in (0,1)^\omega$. 
    Then $\mu_{\textbf{p}}(A)=0$ if and only if there exist sets $F_n \s 2^n$ such that
    \begin{itemize}
        \item $A\s \{x \in 2^\omega : \exists_n^\infty \; x \restriction_n \in F_n\}$,
        \item $\sum\limits_{n=1}^{\infty}\mu_{\textbf{p}}([F_n])<\infty$.
    \end{itemize}
\end{lemma*}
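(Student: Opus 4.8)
The statement is the asymmetric analogue of Lemma~2.5.1 from \cite{BartoszynskiJudah1995}, so the plan is to mirror the classical argument, replacing the uniform measure with $\mu_{\textbf p}$ and keeping track of the fact that cylinders no longer have measure $2^{-n}$. Throughout I will use that $\mu_{\textbf p}([F_n])=\sum_{s\in F_n}\mu_{\textbf p}([s])$, where $[s]$ denotes the basic clopen cylinder determined by $s\in 2^n$, and that $\mu_{\textbf p}([s])=\prod_{k<n}\bigl(s(k)p_k+(1-s(k))(1-p_k)\bigr)$.

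\emph{($\Leftarrow$).} This direction is the easy Borel--Cantelli half. Suppose sets $F_n\s 2^n$ are given with $\sum_n\mu_{\textbf p}([F_n])<\infty$. Let $B=\{x\in 2^\omega:\exists_n^\infty\; x\restriction_n\in F_n\}$, so $B=\bigcap_{N}\bigcup_{n\ge N}[F_n]$. For every $N$ we have $\mu_{\textbf p}(B)\le\mu_{\textbf p}\bigl(\bigcup_{n\ge N}[F_n]\bigr)\le\sum_{n\ge N}\mu_{\textbf p}([F_n])$, and the tail of a convergent series goes to $0$, so $\mu_{\textbf p}(B)=0$; since $A\s B$, also $\mu_{\textbf p}(A)=0$.

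\emph{($\Rightarrow$).} This is the substantive direction. Assume $\mu_{\textbf p}(A)=0$. By outer regularity of $\mu_{\textbf p}$ (a Borel probability measure on the compact metric space $2^\omega$), for each $m\in\omega$ choose an open set $U_m\supseteq A$ with $\mu_{\textbf p}(U_m)<2^{-2m}$. Write each $U_m$ as a countable union of pairwise disjoint basic cylinders, say $U_m=\bigcup_j [s^m_j]$ with $s^m_j\in 2^{\ell^m_j}$; then $\sum_j\mu_{\textbf p}([s^m_j])=\mu_{\textbf p}(U_m)<2^{-2m}$. Now for each length $n$ let
\[
F_n=\bigl\{\,t\in 2^n : \exists\,m\le n\ \exists\,j\ \ell^m_j=n\ \wedge\ t=s^m_j\,\bigr\}.
\]
In other words, $[F_n]$ collects exactly those cylinders from the decompositions of $U_0,\dots,U_n$ whose defining string has length precisely $n$. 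Then $\mu_{\textbf p}([F_n])\le\sum_{m\le n}\mu_{\textbf p}(U_m)<\sum_{m\le n}2^{-2m}<\sum_{m=0}^\infty 2^{-2m}$, which unfortunately is only a uniform bound, not summability in $n$; so the naive bookkeeping needs the extra $2^{-2m}$ margin to be exploited more carefully. Instead, bound $\mu_{\textbf p}([F_n])\le\sum_{m\le n}\bigl(\text{mass of length-}n\text{ pieces of }U_m\bigr)$ and sum over $n$: each cylinder $[s^m_j]$ is counted in exactly one $F_n$ (namely $n=\ell^m_j$) and only for those $n\ge m$, hence
\[
\sum_{n}\mu_{\textbf p}([F_n])\le\sum_{m}\sum_{j}\mu_{\textbf p}([s^m_j])=\sum_m\mu_{\textbf p}(U_m)<\sum_m 2^{-2m}<\infty,
\]
which gives the second bullet. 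For the first bullet, fix $x\in A$. For every $m$, since $x\in U_m$ and the $[s^m_j]$ partition $U_m$, there is a unique $j$ with $x\in[s^m_j]$, i.e. $x\restriction_{\ell^m_j}=s^m_j$; and $\ell^m_j\ge m$ because $x\restriction_{\ell^m_j}\in F_{\ell^m_j}$ requires (from the definition of $F_n$) the index $m$ to satisfy $m\le\ell^m_j$. Thus $x\restriction_{\ell^m_j}\in F_{\ell^m_j}$ for arbitrarily large indices $\ell^m_j$ (as $m\to\infty$ forces $\ell^m_j\to\infty$), so $x\in\{y:\exists_n^\infty\;y\restriction_n\in F_n\}$. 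Hence $A$ is contained in that set, completing the proof.

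\emph{Main obstacle.} The only delicate point is the bookkeeping in the forward direction: one must choose the approximating open sets $U_m$ with geometrically decaying measure and then reorganize their cylinder decompositions by length so that (i) every point of $A$ lands in infinitely many $F_n$'s — which forces restricting $F_n$ to use only $U_m$ with $m\le n$ so that the "infinitely many" is genuine — and (ii) the double sum $\sum_n\mu_{\textbf p}([F_n])$ collapses to $\sum_m\mu_{\textbf p}(U_m)$ by counting each elementary cylinder once. The asymmetry of $\textbf p$ plays no role beyond having $\mu_{\textbf p}$ be a finite Borel measure that is outer regular and additive on cylinders, so the classical combinatorial skeleton transfers verbatim.
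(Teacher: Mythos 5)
Your ($\Leftarrow$) direction is fine and is the same Borel--Cantelli argument as in the paper, and your ($\Rightarrow$) direction follows the paper's covering strategy (open sets $U_m\supseteq A$ of summable measure, decomposed into disjoint cylinders, regrouped by length). However, the step establishing the first bullet has a genuine gap. You assert that the cylinder $[s^m_j]$ of $U_m$ containing $x$ has length $\ell^m_j\geq m$ ``because $x\restriction_{\ell^m_j}\in F_{\ell^m_j}$ requires $m\le\ell^m_j$''. That is circular: $m\le\ell^m_j$ is a necessary condition for the membership you are trying to prove, and nothing in your construction guarantees it. It can genuinely fail: take $p_k=4^{-k-2}$, $x\equiv 1$, $A=\{x\}$, and for all large $m$ let $U_m=[x\restriction_{m-1}]$, a single cylinder of length $m-1$ whose measure is smaller than $2^{-2m}$. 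This is a legitimate choice of covers and decompositions, yet with your definition of $F_n$ (which only admits strings coming from $U_m$ with $m\le n$) the string containing $x$ never qualifies, so $x\restriction_n\in F_n$ for at most finitely many $n$ and the inclusion $A\subseteq\{y:\exists^\infty_n\; y\restriction_n\in F_n\}$ breaks down. The unsupported parenthetical ``$m\to\infty$ forces $\ell^m_j\to\infty$'' rests on the same unproved inequality.

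The gap is fixable in two standard ways, and the rest of your bookkeeping (each cylinder counted once, so $\sum_n\mu_{\mathbf p}([F_n])\le\sum_m\mu_{\mathbf p}(U_m)<\infty$) is correct. Either (i) refine the decomposition of $U_m$ so that every cylinder used has length at least $m$ (always possible by subdividing cylinders), after which $\ell^m_j\ge m$ holds by construction and your argument goes through verbatim; or (ii) drop the restriction $m\le n$ altogether, as the paper does, letting $F_n$ consist of all length-$n$ strings occurring in any of the decompositions. In case (ii) the summability is proved by exactly your double-counting, and the first bullet follows because the cylinder of $U_m$ containing $x$ has measure at most $\mu_{\mathbf p}(U_m)\to 0$ while every fixed cylinder has strictly positive $\mu_{\mathbf p}$-measure (this is where the hypothesis $\mathbf p\in(0,1)^\omega$ is actually used); hence the lengths of these cylinders are unbounded in $m$, so $x\restriction_n\in F_n$ for infinitely many $n$. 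This unboundedness argument is the content hidden behind the paper's ``it is evident'', and it is the missing ingredient in your write-up.
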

\begin{proof}
    Since $A$ has measure $0$, there exists open sets $U_n$ covering $A$, such that $\mu_{\textbf{p}}(U_k)<2^{-(k+1)}$. We can write each $U_k$ as a disjoint union of basic clopen sets
    $$U_k=\bigcup\limits_{m<\omega}[u_k^m],$$
    where $u_k^m\in 2^{<\omega}$.
    
    Let $F_n=2^n\cap\{u_k^m : k,m<\omega\}$.
    It is evident that each $x \in \bigcap\limits_{k<\omega}U_k$ belongs to infinitely many sets $F_n$. Moreover,
    $$\sum\limits_{n<\omega}\mu_{\textbf{p}}([F_n])\leq \sum\limits_{m,n<\omega}\mu_{\textbf{p}}([u_n^m])=\sum\limits_{n<\omega}\sum\limits_{m<\omega}\mu_{\textbf{p}}([u_n^m])=\sum\limits_{n<\omega}\mu_{\textbf{p}}(U_n)\leq 1.$$

    For the proof in the other direction, we need to show that the set
    $$\{x \in 2^\omega : \exists_n^\infty \; x \restriction_n \in F_n\}$$
    has measure zero. This set can be covered by the sets of the form
    $$U_n=\{x \in 2^\omega : 
    \mathop{\scalebox{1.2}{$\exists$}}\limits_{k>n} 
    \; x \restriction_k \in F_k\},$$
    and 
    $$\mu_{\mathbf{p}}(U_n)\leq \sum\limits_{k>n}\mu_{\mathbf{p}}([F_k])\xrightarrow[n\rightarrow \infty]{}0,$$
    given that
    $$\sum\limits_{k=1}^{\infty}\mu_{\textbf{p}}([F_k])<\infty.$$
\end{proof}

\begin{lemma*}
    Let  $A\s 2^\omega$ satisfy $\mu_{\textbf{p}}(A)=0$ for some $\textbf{p} \in (0,1)^\omega$, and let $q \in \Q \subseteq 2^\omega$. Then $\mu_{\textbf{p}}(A +_2 q)=0$.
\end{lemma*}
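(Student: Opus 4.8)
The plan is to reduce the general case to translation by a single one-coordinate flip, and then iterate. Write $q \in \Q$ as $q = \delta_{k_1} +_2 \dots +_2 \delta_{k_r}$, where $\delta_k \in 2^\omega$ is the sequence with a single $1$ in position $k$; since $q$ has only finitely many $1$'s, such a finite decomposition exists. Because $+_2$ is associative and $(A +_2 q_1) +_2 q_2 = A +_2 (q_1 +_2 q_2)$, it suffices to prove that $\mu_{\textbf{p}}(A +_2 \delta_k) = 0$ whenever $\mu_{\textbf{p}}(A) = 0$, and then apply this $r$ times.

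So fix $k \in \omega$. First I would check the claim on basic clopen sets: if $[u]$ is the cylinder determined by $u \in 2^n$, then $[u] +_2 \delta_k$ is again a cylinder, namely $[u']$ where $u'$ agrees with $u$ except (when $k < n$) in coordinate $k$, where it is flipped. A direct computation with the formula for $\mu_{\textbf{p}}$ given in the Preliminaries shows
\[
\mu_{\textbf{p}}([u'])
= \frac{\text{(factor at coordinate }k\text{ in }u')}{\text{(factor at coordinate }k\text{ in }u)}\,\mu_{\textbf{p}}([u])
\le \frac{\max\{p_k, 1-p_k\}}{\min\{p_k, 1-p_k\}}\,\mu_{\textbf{p}}([u]),
\]
with the convention that for $k \ge n$ the two sets coincide and the ratio is $1$. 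Let $C_k := \max\{p_k,1-p_k\}/\min\{p_k,1-p_k\}$; since $p_k \in (0,1)$, we have $C_k < \infty$. Hence the map $E \mapsto E +_2 \delta_k$ multiplies the measure of any cylinder by at most $C_k$, and since every open set is a disjoint countable union of cylinders (and translation by $\delta_k$ respects disjointness and preserves the "open" property), $\mu_{\textbf{p}}(U +_2 \delta_k) \le C_k\,\mu_{\textbf{p}}(U)$ for every open $U$.

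Now if $\mu_{\textbf{p}}(A) = 0$, cover $A$ by open sets $U_m$ with $\mu_{\textbf{p}}(U_m) \to 0$ (e.g. $\mu_{\textbf{p}}(U_m) < 2^{-m}$). Then $A +_2 \delta_k \subseteq U_m +_2 \delta_k$ for each $m$, and $\mu_{\textbf{p}}(U_m +_2 \delta_k) \le C_k 2^{-m} \to 0$, so $\mu_{\textbf{p}}(A +_2 \delta_k) = 0$. Iterating over the $r$ flips in the decomposition of $q$ gives $\mu_{\textbf{p}}(A +_2 q) = 0$, which is the desired conclusion. (Alternatively, one can bundle all $r$ coordinates at once: translation by $q$ multiplies the measure of any cylinder by at most $\prod_{i=1}^r C_{k_i} < \infty$, and the same covering argument applies directly.)

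I do not expect a genuine obstacle here; the only point requiring a little care is the bookkeeping that $[u] +_2 \delta_k$ is still a cylinder and that the measure ratio is controlled uniformly by the constant $C_k$ depending only on $p_k$ — this is exactly where the hypothesis $\textbf{p} \in (0,1)^\omega$ (no coordinate equal to $0$ or $1$) is used, since otherwise $C_k$ could be infinite. Everything else is the standard "quasi-invariance from bounded Radon–Nikodym ratios on a generating algebra" argument.
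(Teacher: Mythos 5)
Your proof is correct, and it reaches the conclusion by a somewhat different route than the paper. The shared heart of both arguments is the same quantitative fact: translating a basic clopen cylinder by a finitely supported $q$ changes its $\mu_{\mathbf{p}}$-measure by at most the factor $C=\prod_{i:\,q(i)=1}\max\{p_i,1-p_i\}/\min\{p_i,1-p_i\}$, which is finite precisely because $\mathbf{p}\in(0,1)^\omega$ (your $\prod_i C_{k_i}$ is exactly the paper's constant $C$). Where you diverge is the covering machinery: the paper deduces the lemma from its Lemma \ref{bartoszynskilemma}, taking the witnessing sets $F_n\subseteq 2^n$ for the null set $A$, translating them to $F_n+_2q$, and checking that the series $\sum_n\mu_{\mathbf{p}}([F_n+_2q])\leq C\sum_n\mu_{\mathbf{p}}([F_n])$ still converges; you instead argue directly from outer regularity, covering $A$ by open sets of measure tending to zero and pushing the covers forward, decomposing each open set into disjoint cylinders. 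Your argument is more elementary and self-contained (it does not need the characterization lemma at all, only the regularity fact that the paper itself invokes when proving that lemma), while the paper's version keeps the proof uniform with the Bartoszy\'nski--Judah-style apparatus it develops in the appendix. The reduction to single-coordinate flips $\delta_k$ is harmless but unnecessary, as you note yourself in the bundled variant; and your care about $[u]+_2\delta_k$ being again a cylinder (equal to $[u]$ when $k\geq n$) and about disjointness of the cylinder decomposition is exactly the right bookkeeping.
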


\begin{proof}
    Let $C=\prod\limits_{\substack{i<\omega, \\ q(i)=1}}{\frac{\max\{p_i,1-p_i\}}{\min\{p_i,1-p_i\}}}$. Evidently, we have
    $$\mu_{\textbf{p}}(U+_2q) \leqslant C\cdot \mu_{\textbf{p}}(U)$$
    for any basic open set $ U \s 2^\omega$. 
    
    Fix sets $\{F_n : n<\omega\}$ as in Lemma \ref{bartoszynskilemma}. In particular, we have:
    \begin{itemize}
        \item $A+_2q \s \{x \in 2^\omega : \exists_n^\infty \; x \restriction_n \in F_n+_2q \}$,
        \item $\sum\limits_{n=1}^{\infty}\mu_{\textbf{p}}([F_n+_2q]) \leqslant \sum\limits_{n=1}^{\infty}\mu_{\textbf{p}}([F_n])\cdot C<\infty$.
    \end{itemize}
    Therefore $\mu_{\textbf{p}}(A)=0$ by Lemma \ref{bartoszynskilemma}.
\end{proof}

\begin{prop}
    Suppose $(p_n)_{n<\omega}$ is convergent to $c \neq \frac12$. Then there exists $A\subseteq 2^\omega$ and $x\in2^\omega$ such that $\mu_p(A)=0$ and $\mu_p(A+x)= 1$.
\end{prop}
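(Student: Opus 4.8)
The plan is to exploit the quasi-invariance failure quantitatively: when $p_n\to c\neq\frac12$, the ratio $\tfrac{\max\{p_n,1-p_n\}}{\min\{p_n,1-p_n\}}$ is bounded below by some $r>1$ for all large $n$, so flipping a single coordinate multiplies the conditional probability of that coordinate by a factor bounded away from $1$. First I would fix $r>1$ and $N$ so that for all $n\ge N$ we have (WLOG, passing to the case $c>\frac12$) $p_n\ge c'>\frac12$ with $\tfrac{p_n}{1-p_n}\ge r$. I will choose the translating sequence to be $x\in 2^\omega$ with $x(n)=1$ for every $n\ge N$ and $x(n)=0$ for $n<N$; note $x\notin\mathbb Q$, which is exactly why Lemma~\ref{rationals} does not apply and the construction is possible.

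The core idea is a strong-law-of-large-numbers argument. Consider the set
\[
A=\Bigl\{y\in 2^\omega : \liminf_{m\to\infty}\frac{1}{m}\#\{\,n\in[N,N+m): y(n)=1\,\}\ \ge\ c'\Bigr\}.
\]
Under $\mu_{\mathbf p}$, the coordinates $y(n)$ for $n\ge N$ are independent with $\PP(y(n)=1)=p_n\ge c'$, so by the second Borel--Cantelli / SLLN-type estimate the average of the first $m$ of them converges $\mu_{\mathbf p}$-a.s. to a value $\ge c'$ (more precisely, since the $p_n$ need not converge, one uses that $\liminf$ of the averages of $p_n$ is $\ge c'$ together with Kolmogorov's SLLN for independent bounded non-identically-distributed variables, or a direct Chebyshev-plus-Borel--Cantelli argument along a subsequence $m_k=k^2$ and monotonicity between consecutive squares). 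Hence $\mu_{\mathbf p}(A)=1$. On the other hand, $A+_2 x$ is the set of $y$ with $y(n)=1-((y+_2x)(n))$ for $n\ge N$, i.e. $\liminf\frac1m\#\{n\in[N,N+m): y(n)=0\}\ge c'$, equivalently $\limsup\frac1m\#\{n\in[N,N+m): y(n)=1\}\le 1-c'<\frac12<c'$. But $\mu_{\mathbf p}$-a.s.\ that same $\liminf$ of the fraction of ones is $\ge c'$, which is incompatible with the $\limsup$ being $\le 1-c'$; so $\mu_{\mathbf p}(A+_2 x)=0$. Taking complements, the set $A':=2^\omega\setminus(A+_2x)$ satisfies $\mu_{\mathbf p}(A')=1$ and $\mu_{\mathbf p}(A'+_2x)=\mu_{\mathbf p}(2^\omega\setminus A)=0$; relabelling $A'$ as $A$ gives a null set whose $x$-translate is co-null, as required. (If $c<\frac12$ one runs the symmetric argument with the roles of $0$ and $1$ exchanged, or simply notes $p_n\mapsto 1-p_n$ is a measurable coordinate-wise bijection.)

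The one delicate point is that $(p_n)$ converges to $c$ but the individual terms are not equal, so I cannot quote an i.i.d.\ strong law verbatim. The main obstacle is therefore to justify the a.s.\ convergence (or the needed one-sided a.s.\ bound) of $\frac1m\sum_{n=N}^{N+m-1} y(n)$ to something $\ge c'$: the clean way is to pick $c'$ with $\tfrac12<c'<c$, use that $p_n\ge c'$ eventually (enlarging $N$), set $S_m=\sum_{n=N}^{N+m-1}(y(n)-p_n)$, bound $\operatorname{Var}(S_m)\le m/4$, apply Chebyshev to get $\PP(|S_{m_k}|>\varepsilon m_k)\le \tfrac{1}{4\varepsilon^2 m_k}$ along $m_k=k^2$, sum over $k$ and invoke the first Borel--Cantelli lemma, then fill the gaps between consecutive squares using $|S_{m}-S_{m_k}|\le m_{k+1}-m_k=O(k)=o(m_k)$. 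This yields $\frac1m S_m\to 0$ a.s., hence $\frac1m\sum y(n)\ge \frac1m\sum p_n-o(1)\ge c'-o(1)$ a.s., which is exactly what the argument above needs. Everything else (the translation bookkeeping, taking complements) is routine.
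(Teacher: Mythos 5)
Your argument is essentially the same as the paper's: a law-of-large-numbers / density-of-ones computation, exploiting that translation by an eventually-all-ones sequence swaps the limiting frequency between (roughly) $c$ and $1-c\neq c$; the paper simply cites the LLN for the exact limit and translates by the constant-one sequence, whereas you prove the needed one-sided bound by Chebyshev along $m_k=k^2$ plus Borel--Cantelli, which is a fine (more self-contained) way to handle the non-identically distributed coordinates. The only flaw is the final relabelling: $A'=2^\omega\setminus(A+_2x)$ has measure $1$, not $0$, so "relabelling $A'$ as $A$" produces a co-null set with null translate, the reverse of what the proposition asks; you should instead take $2^\omega\setminus A$ (equivalently $A'+_2x$), which is null and whose translate $2^\omega\setminus(A+_2x)$ is co-null. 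With that one-line correction the proof is complete.
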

\begin{proof}
    By the Law of Large Numbers, we have 
    \[
    \mu_{\textbf{p}}\left(\left\{ x\in 2^\omega: \lim_{n\to\infty} \frac{\#(x\cap n)}{n}=c \right\}\right)=1.
    \]
    Let 
    \[
    A=\left\{x\in 2^\omega\colon \lim_{n\to\infty}\frac{ \#\left(x\cap n\right)}{n }= 1-c \right\},
    \]
    and note that $\mu_p \left(A\right)=0,$ 
    since $1-c\neq c$. Then
    \[
    A+\mathbbm{1} =\left\{ x\in 2^\omega: \lim_{n\to\infty} \frac{\#(x\cap n)}{n}=c \right\},
    \]
    where $\mathbbm{1}$ is the constant sequence of the value $1$. hence $\mu_p (A+\mathbbm{1})=1$.
\end{proof}
Given two sequences $\textbf{p}, \textbf{q}\in(0,1)^\omega$ one may want to determine when measures $\mu_\textbf{p}, \mu_\textbf{q}$ are absolutely continuous with respect to each other or singular. Even though we do not need such a result for our paper, it would provide deeper insight into the realm of drawable graphs. A suitable characterization was provided by Kakutani in \cite{Kakutani_Product}. Kakutani’s theorem provides a full answer for the case of general product measures, while the section "An application" discusses the case of products of two-point measures. More precisely, Kakutani proved the following: 
\begin{itemize}
    \item Measures $\mu_\textbf{p}, \mu_\textbf{q}$ are either mutually absolutely continuous (i.e. they have the same null sets), or they are singular (i.e. there is a set $A$ such that $\mu_\textbf{p}(A)=1$ and $\mu_\textbf{q}(A)=0$);
    \item Measures $\mu_\textbf{p}, \mu_\textbf{q}$ are mutually absolutely continuous if and only if 
    \[
    \sum\limits_{n=0}^\infty \left(\sqrt{p_n}-\sqrt{q_n}\right)^2+\left(\sqrt{1-p_n}-\sqrt{1-q_n}\right)^2<\infty;
    \]
    \item If sequences $\textbf{p},\textbf{q}$ are bounded away from both $0$ and $1$, then measures $\mu_\textbf{p}, \mu_\textbf{q}$ are mutually absolutely continuous if and only if 
    \[
    \sum\limits_{n=0}^\infty \left( p_n-q_n\right)^2<\infty.
    \]
\end{itemize}

It is worth recalling the standard, yet non-trivial fact that every uncountable Polish space, in particular the Cantor set, carries $\mathfrak{c}$-many Borel probability measures concentrated on pairwise disjoint sets. Note that Theorem \ref{T:ContGraphs} yields an even stronger version of that fact, where all measures can be taken as product measures generated by permutations of a single sequence $\textbf{p}\in\Acc_{\{0,1\}}$. 

\section*{Acknowledgement}\label{section Acknowledgement}
The second and third authors would like to thank the Institute of Mathematics of the Czech Academy of Sciences for its hospitality during their stay in Prague, which made it possible to complete a substantial part of this paper.
We are grateful to Tomasz Kania for bringing the paper \cite{Kakutani_Product} to our attention.

\bibliographystyle{amsplain}
\bibliography{bibliografia}

\end{document}